\documentclass[12pt,amstex]{amsart}

\usepackage{mathptmx}
\usepackage{mathrsfs}
\usepackage{verbatim}
\usepackage{url}
\usepackage[all]{xy}
\usepackage{color}

\usepackage{tikz}
\usetikzlibrary{arrows.meta,animations}
\tikzset{>=stealth}
\usetikzlibrary{calc} 
\usetikzlibrary{scopes}

\usepackage[colorlinks=true,citecolor=blue]{hyperref}
\usepackage{stmaryrd}
\usepackage{epsfig}
\usepackage{amsmath,bm}
\usepackage{amssymb}
\usepackage{amssymb,amsthm,amsmath}
\usepackage{mathrsfs}
\usepackage{amscd}
\usepackage{graphicx}
\usepackage{pstricks}
\usepackage{theoremref}

\theoremstyle{plain}
\newtheorem{claim}{Claim}
\newtheorem{thm}{Theorem}[section]
\newtheorem{lem}[thm]{Lemma}
\newtheorem{prop}[thm]{Proposition}

\theoremstyle{definition}
\newtheorem{defn}[thm]{Definition}
\newtheorem{rem}[thm]{Remark}

\newcommand{\Z}{{\mathbb{Z}}}
\newcommand{\N}{\mathbb{N}}
\newcommand{\F}{\mathcal F}

\newcommand{\ep}{\varepsilon}
\newcommand{\ra}{\rightarrow}

\def \RP {{\bf RP}}

\newcommand{\E}{{\mathbb{E}}}

\def \A {\mathcal A}
\def \d {\delta}

\begin{document}
	\title{SATURATION FOR PRODUCT SYSTEMS of polynomials}
	
	\author{Qinqi Wu, Jiaqi Yu}
	
	\address{School of Mathematics, Shanghai university of Finance and Economics, Shanghai, 200433, P.R. China}
	\email{wuqinqi@mail.shufe.edu.cn}
	
	\address{}
	
	\email{}

	\subjclass[2020]{Primary: 37B05}
	\keywords{Saturation theorems, polynomials, pro-nilsystems.}

	\thanks{ }
	
	\date{\today}
	
	\begin{abstract}
Recently, Qiu, Xu, Ye and Yu proved that for product system of finitely many minimal systems, the maximal $\infty$-step pro-nilfactor of the system is the topological characteristic factor. 
In this paper, we extend the result to polynomials.
	\end{abstract}
	
	\maketitle
	
	\section{Introduction}
	      
	In this section, we will provide the background of the research and state our main results. By a topological dynamical system (t.d.s. for short) $(X,T)$, we mean that $X$ is a compact metric
	space and $T:X\to X$ is a homeomorphism.
	
	\subsection{Motivation}
	The study of the convergence of {\it multiple ergodic averages}:
	\begin{equation}\label{ave1}
		\frac{1}{N} \sum_{n = 0}^{N - 1} f_1(T^n x) f_2(T^{2n} x) \dots f_d(T^{dn} x),
	\end{equation}
	began with Furstenberg's proof of
	Szemer\'{e}di's Theorem \cite{Sz75} via an ergodic theoretical analysis \cite{FH}. In the study the notion of characteristic factor, to control the behavior of the limit in the sense of $L^2$ form, was introduced by Furstenberg and Weiss. 
	By the results of Host-Kra \cite{HK05} and Ziegler \cite{TZ07}, the characteristic factor of (\ref{ave1}) is some $(d-1)$-step pro-nilsystem,
	which is also called a \emph{pro-nilfactor}.
	
	In the topological setting, the corresponding notion of  characteristic factor was first introduced by Glasner in \cite{GE94}. To be precise, let $\pi:(X,T)\ra(Y,T)$ be a factor map of t.d.s. and $d\in\N$, $(Y,T)$ is a {\em $d$-step topological characteristic factor (TCF for short)} if there exists a dense $G_{\delta}$ set $\Omega$ of $X$ such that for each $x\in\Omega$ the orbit closure $L_x^d=\overline{\mathcal{O}}(x^{(d)},\tau_d)$ is $\pi^{(d)}=:\pi\times\cdots \times \pi$ ($d$-times) saturated where $x^{(d)}=(x,\ldots,x)$ ($d$-times) and $\tau_d=T\times T^2\times \cdots \times T^d$, i.e., $(\pi^{(d)})^{-1}\pi^{(d)}(L_x^d)=L_x^d$.
	
	In\cite{HKM}, Host, Kra and Maass introduced the notion of {\it $d$-step regionally proximal relation}, denoted by $\mathbf{RP}^{[d]}$, to get the corresponding pro-nilfactors for t.d.s.. For $d \in \mathbb{N}$, we say that a minimal system $(X,T)$ is a $d$-step pro-nilsystem if $\mathbf{RP}^{[d]} = \Delta$. 
	For a minimal distal system $(X,T)$, it was proved that $\mathbf{RP}^{[d]}$ is an equivalence relation and $X_d:=X/\mathbf{RP}^{[d]}$ is the maximal $d$-step pro-nilfactor \cite{HKM}. Later, Shao and Ye \cite{SY} showed that in fact for any minimal system the regionally proximal relation of order $d$ is an equivalence relation. Furthermore, for any minimal system $\mathbf{RP}^{[\infty]} = \bigcap_{d \geqslant 1} \mathbf{RP}^{[d]}$ is also an equivalence relation.

	Let $\pi:(X,T)\to (Y,T)$ be a factor map of  t.d.s.  and $d\in\mathbb{N}$.
	We say that $(Y,T)$ is a {\it $d$-step weak topological characteristic factor (wTCF for short)} of $(X,T)$ if there exists a dense $G_\delta$ subset $\Omega$ of $X$ such that $(\pi^{-1}(\pi(x))^d \subseteq L_x^d(X)$ for any $x\in \Omega$, and say $(Y,T)$ is an $\infty$-step wTCF of $(X,T)$ if it is a $d$-step wTCF for every $d\in\N$. Glasner, Hunag, Shao, Weiss and Ye in \cite{GHSWY} obtained the following theorem.
	
	\medskip

	\noindent{\bf Theorem GHSWY:} {\it For any minimal system $(X,T)$ and $d\in\mathbb{N}$, $(X_\infty,T)$ is a $d$-step wTCF of $(X,T)$ where $X_\infty=X/\RP^{[\infty]}$ is the maximal $\infty$-step pro-nilfactor of $X$.}

	\medskip
	Recently, Theorem GHSWY was extended to the product system of finitely	many minimal systems by Qiu, Xu, Ye and Yu \cite{QXYY25}.

	\medskip
	\noindent{\bf Theorem QXYY:}
	{\it
		Let $k\in\N$ and $(X_i,T_i)$ be minimal systems for any $1\leq i\leq k$. For $d\in \mathbb{N}\cup\{\infty\}$, 
		$(\prod_{i=1}^{k}X_{i,d},\prod_{i=1}^{k}T_{i})$ is a $d$-step wTCF of $(\prod_{i=1}^{k}X_{i},\prod_{i=1}^{k}T_{i})$ where $X_{i,d}$ be the maximal $d$-step pro-nilfactor of $X_i$.}

	\medskip
	Now let $C=\{p_1,\dots,p_d\}$ be a set of integral polynomials vanishing at 0. It is easy to extend the notion of  $d$-step TCF (resp. wTCF) to {\it $d$-step TCF (resp. wTCF) for $C$} by replacing $L_x^d$ with $$L^C_x:=\overline{\{(T^{p_1(n)}x, \ldots, T^{p_d(n)}x): n\in\Z\}}.$$ 
    This definition was inspired by the study of the multiple ergodic averages
    \begin{equation}
		\frac{1}{N} \sum_{n = 0}^{N - 1} f_1(T^{p_1(n)} x) f_2(T^{p_2(n)} x) \dots f_d(T^{p_d(n)} x).
	\end{equation}
	In \cite{Qiu}, Qiu proved a version of the saturation theorem for polynomials and this result was subsequently  strengthened by Huang, Shao and Ye \cite{HSY25}. By utilizing the definition of wTCF, we have

	\medskip
	\noindent{\bf Theorem HSY:}
	{\it Let $C=\{p_1,\dots,p_d\}$ be a set of integral polynomials vanishing at $0$. For any minimal system $(X,T)$ and $d\in\mathbb{N}$, $(X_\infty,T)$ is a $d$-step wTCF for $C$.}

	\subsection{Main results}
	In this paper, we demostrate the saturation theorem for polynomials to the product system. 
	The following is one of our main results. 
	
	\begin{thm}\label{main thm}
			Let $\pi_{i}: (X_i,T_i)\rightarrow (Y_i,T_i)$ be factor maps of minimal systems for every $1\leq i\leq k$ and let $p_1, p_2, \ldots, p_d$ be distinct non-constant integral polynomials vanishing at $0$. If $R_{\pi_{i}}\subseteq \mathbf{RP}_{X_i}^{[\infty]}$ for every $1\leq i\leq k$,
			then for any $d\in \mathbb{N}$ and any open subsets $V_{0}^{(i)},V_{1}^{(i)},\ldots, V_d^{(i)}$ of $X_i$ 
			with $\bigcap_{j=0}^d\pi_{i}(V_{j}^{(i)})\neq\emptyset$,
			there exists $n\in\mathbb{N}$ such that
			\[
			V_{0}^{(i)}\cap T_i^{-p_1(n)} V_{1}^{(i)}\cap \cdots \cap T_i^{-p_d(n)} V_d^{(i)}\neq\emptyset,\ \ \ \forall \; 1\leq i\leq k.
			\]
		\end{thm}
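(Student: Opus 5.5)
We would argue by induction along PET (polynomial exhaustion technique) on the weight vector of the family $\{p_1,\dots,p_d\}$, in the spirit of Qiu and of Huang--Shao--Ye, but run simultaneously in all $k$ coordinates. Concretely, we would work on the product system $(X,T)=(\prod_i X_i,\prod_i T_i)$ with factor map $\pi=\prod_i\pi_i$. The first step is to reduce to a statement about this product. Pick $y_i\in\bigcap_j\pi_i(V_j^{(i)})$ and points $x_j^{(i)}\in V_j^{(i)}\cap\pi_i^{-1}(y_i)$. Since $R_{\pi_i}\subseteq\mathbf{RP}^{[\infty]}_{X_i}$, the tuples $(x_0^{(i)},\dots,x_d^{(i)})$ lie in a single $\mathbf{RP}^{[\infty]}$-class. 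So it suffices to show the following: for every $\mathbf{RP}^{[\infty]}$-related family in each coordinate and all neighbourhoods of these points, there is a common $n$ returning all coordinates at once. We would strengthen the induction hypothesis to a ``dense $G_\delta$/any open set'' form, so that it can be iterated. The natural form is: for $x$ in a dense $G_\delta$ of $X$ (product points), every point of $\prod_i \pi_i^{-1}(\pi_i(x_i))^{d}$ lies in $\overline{\{(T^{p_1(n)}x,\dots,T^{p_d(n)}x):n\}}$. The theorem follows from this by openness and minimality.

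Base case: the family is linear, $p_j(n)=a_jn$ with distinct $a_j$. Here we would invoke Theorem QXYY, or rather its proof for general distinct linear coefficients rather than $1,\dots,d$. That proof uses the structure of $\mathbf{RP}^{[\infty]}$ via $\infty$-step cubes in each $X_i$ together with the product van der Waerden-type argument. If QXYY is only stated for $j n$, we would reduce the case $a_jn$ to it by passing to $T^{a}$-systems and using that $\mathbf{RP}^{[\infty]}$ of $(X_i,T_i^m)$ is compatible with that of $(X_i,T_i)$ on minimal components.

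Inductive step (PET): given a family of weight above the linear one, let $p_1$ be of minimal degree. Choose a large $m$ and consider the derived family $\{p_j(n+m)-p_1(n)-p_j(m)\}$ together with the shifted copies, which has strictly smaller weight. Applying the inductive hypothesis to the derived family with the open sets $T_i^{-p_j(m)}V_j^{(i)}$ and the sets $V_0^{(i)}$ pulled back appropriately, we produce a nonempty intersection. Then we translate back via the substitution $n\mapsto n+m$, absorbing $T_i^{p_1(n)}$ into $V_0^{(i)}$. The key point is that the same $n$ and $m$ serve all $k$ coordinates simultaneously, because we apply the hypothesis on the product. The van der Corput/PET trick needs the following: whenever $V_0^{(i)}$ and the $V_j^{(i)}$ project to intersecting sets, the pieces $V_0^{(i)}\cap T_i^{-m}V_0^{(i)}$ still project to intersecting sets for a syndetic set of $m$. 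We would obtain this from the fact that $\pi_i$ is open up to almost one-to-one modifications (O-diagram), and from the fact that minimal points of the product are dense.

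The main obstacle is this step: keeping the hypothesis ``$\bigcap_j\pi_i(V_j^{(i)})\neq\emptyset$ for every $i$'' stable under the PET differencing uniformly in $i$. The product of minimal systems need not be minimal, and the factor maps need not be open. Our first attempt would be to replace each $\pi_i$ by an open extension via the standard O-diagram. We would check that $R_{\pi_i}\subseteq\mathbf{RP}^{[\infty]}$ lifts; this holds since proximal extensions are in $\mathbf{RP}^{[\infty]}$. We would then choose $m$ in the intersection of the $k$ syndetic (indeed, $\mathrm{Nil}_\infty$-Bohr type) return-time sets. To ensure this intersection is nonempty, we would use that such return sets for $\mathbf{RP}^{[\infty]}$-related points are dual to $\infty$-step nilsystems, and that a finite intersection of nil-Bohr sets is again nil-Bohr.
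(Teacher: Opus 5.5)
\textbf{There is a genuine gap, exactly at the point you flag as the main obstacle, and the proposed fix does not work.} For each coordinate $i$ separately, the single-system theorem already gives good times. So the entire content of the product statement is to make the good-time sets of different coordinates intersect.

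You propose to take $m$ in an intersection of ``Nil$_\infty$-Bohr type'' return sets attached to $\mathbf{RP}^{[\infty]}$-related points, using that nil-Bohr sets form a filter. This confuses a family with its dual. For $(x,y)\in\mathbf{RP}^{[\infty]}$ and $U\ni y$, the set $N(x,U)=\{n:T^nx\in U\}$, and likewise the set of polynomial saturation times, lies in $\mathcal{F}_{fip}$. That is the sense in which it is ``dual to $\infty$-step nilsystems''. But it is not in the filter $\mathcal{F}_{fip}^*$, and $\mathcal{F}_{fip}$ is not closed under intersections. For example, a weakly mixing minimal $X$ has $\mathbf{RP}^{[\infty]}=X\times X$, yet $N(x,U_1)\cap N(x,U_2)=\emptyset$ for disjoint $U_1,U_2$. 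If such sets always met across coordinates, there would be nothing to prove beyond the single-system case. The only sets genuinely in $\mathcal{F}_{fip}^*$ are recurrence sets $\{n: W\cap T^{-p_1(n)}W\cap\cdots\cap T^{-p_d(n)}W\neq\emptyset\}$ of a single open set (Lemma \ref{pol rec for open set}), and these carry no fibre information.

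Separately, PET on the product is not available as you sketch it. The Bergelson--Leibman chain-and-pigeonhole argument needs finitely many translates of an open set to cover the space. This fails for $\prod_i Y_i$, already for two copies of one irrational rotation. Passing to a minimal subset of $\prod_i X_i$ discards the fibres $\prod_i\pi_i^{-1}(y_i)$ the statement is about; think of the diagonal in $X\times X$. Moreover, a single derivation with one $m$ in which $T_i^{p_1(n)}$ is ``absorbed into $V_0^{(i)}$'' either makes the base set depend on $n$ or puts $-p_1$ on $V_0^{(i)}$, so the weight does not drop.

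\textbf{The missing idea is an asymmetric, coordinate-by-coordinate argument.} This is how the paper proceeds, and it needs no PET beyond the single-system theorem.

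First, Lemma \ref{key} applies Theorem \ref{polsat} to the dilated families $\{p_j(mn): j\in[k],\ m\in I_\ell\}$, where $I_\ell$ are finite IP-sets of a lacunary sequence chosen so that these polynomials stay distinct. This gives: for generic $z\in V_0$ there is $A\in\mathcal{F}_{fip}$ with $T^{p_j(n)}z\in V_j$ for all $j$ and all $n\in A$.

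Then the coordinates are treated one at a time. At step $i$, the single $\mathcal{F}_{fip}$ set comes from coordinate $i$, using the shifted polynomials $p_j(n+n_{i-1})-p_j(n_{i-1})$ and the sets $T_i^{-p_j(n_{i-1})}V_j^{(i)}$. Every other coordinate contributes only an $\mathcal{F}_{fip}^*$ set:
\begin{itemize}
\item for $l<i$, the recurrence set of $V_0^{(l)}\cap\bigcap_j T_l^{-p_j(n_{i-1})}V_j^{(l)}$ in $X_l$;
\item for $l>i$, the recurrence set of $W_l\cap\bigcap_j T_l^{-p_j(n_{i-1})}W_l$ in $Y_l$.
\end{itemize}
By Lemma \ref{fip*} these sets meet. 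Choosing $m$ in the intersection and setting $n_i=n_{i-1}+m$ preserves everything achieved so far. The O-diagram then handles non-open $\pi_i$, as you anticipated.
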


	As a corollary of Theorem \ref{main thm}, we have the following theorem.
	\begin{thm}\label{main thm'}
		Let $\pi_{i}: (X_i,T_i)\rightarrow (X_{i,\infty},T_i)$ be factor maps of minimal systems for every $1\leq i\leq k$ and let $C=\{p_1, p_2, \ldots, p_d\}$ be distinct non-constant integral polynomials vanishing at $0$. Then $(\prod_{i=1}^{k}X_{i,\infty},\prod_{i=1}^{k}T_i)$ is a $d$-step wTCF for $C$ of $(\prod_{i=1}^{k}X_{i},\prod_{i=1}^{k}\prod_{i=1}^{k}T_i)$ for any $d\in\mathbb{N}$  where $X_{i,\infty}$ be the maximal $\infty$-step pro-nilfactor of $X_i$.
	\end{thm}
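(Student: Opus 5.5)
We derive Theorem \ref{main thm'} from Theorem \ref{main thm} by the standard Baire category argument that turns a "hitting" statement for open sets into a residual set of good points. Write $X=\prod_{i=1}^k X_i$, $T=\prod_{i=1}^k T_i$, $Y=\prod_{i=1}^k X_{i,\infty}$ and $\pi=\prod_{i=1}^k\pi_i$. Since $R_{\pi_i}=\mathbf{RP}^{[\infty]}_{X_i}$, the hypothesis of Theorem \ref{main thm} holds for every $i$. We must produce a dense $G_\delta$ set $\Omega\subseteq X$ such that for $x\in\Omega$,
\[
(\pi^{-1}(\pi(x)))^d\subseteq L^C_x=\overline{\{(T^{p_1(n)}x,\ldots,T^{p_d(n)}x):n\in\Z\}}.
\]

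First we fix countable bases $\{U^{(i)}_m\}_m$ of $X_i$ and take as basis of $X$ the products $U_m=\prod_i U^{(i)}_{m_i}$. For each $(d+1)$-tuple $\mathbf{m}=(m_0,m_1,\ldots,m_d)$ of basic product sets, define
\[
E_{\mathbf m}=\Big\{x\in X:\ x\notin \textstyle\bigcap_{j=1}^d \pi^{-1}\pi(\overline{U_{m_j}})\ \text{ or }\ \exists n\in\Z,\ T^{p_j(n)}x\in U_{m_j}\ \forall 1\le j\le d\Big\}.
\]
The second condition defines an open set. The first condition defines an open set because $\pi^{-1}\pi(\overline{U})$ is closed, $\pi$ being a closed map between compact spaces. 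So $E_{\mathbf m}$ is open, and we set $\Omega=\bigcap_{\mathbf m}E_{\mathbf m}$, which is $G_\delta$. (Only the tuples $(m_1,\ldots,m_d)$ matter for the definition; $m_0$ is used in the density check.)

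Next we show that each $E_{\mathbf m}$ is dense. Let $V_0$ be a basic open set of $X$ and suppose $V_0$ contains no point of the first kind, i.e. $V_0\subseteq \bigcap_j\pi^{-1}\pi(\overline{U_{m_j}})$. Then $\pi(V_0)\subseteq\bigcap_j\pi(\overline{U_{m_j}})$. To obtain the open-set intersection condition of Theorem \ref{main thm}, we shrink first. We replace each $U_{m_j}$ by a slightly larger open set $U'_{m_j}\supseteq\overline{U_{m_j}}$; with the sets $V_0,U'_{m_1},\dots,U'_{m_d}$, taken coordinatewise, the condition $\bigcap_j \pi_i(V^{(i)}_j)\neq\emptyset$ holds. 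Theorem \ref{main thm} then gives an $n$ and, for each $i$, a point $x_i\in V_0^{(i)}$ with $T_i^{p_j(n)}x_i\in U'^{(i)}_{m_j}$. The point $x=(x_i)\in V_0$ lies in the "enlarged" version of $E$.

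To avoid this enlargement mismatch, the cleaner approach is to define $E$ using pairs of basic sets $W\Subset U$ (with $\overline W\subseteq U$): the first clause uses $\pi^{-1}\pi(\overline{W_{m_j}})$ and the second clause uses hitting $U_{m_j}$. Density then follows exactly as above, with $V_j=U_{m_j}\supseteq\overline{W_{m_j}}$.

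Finally we verify the conclusion. Take $x\in\Omega$ and $(y_1,\ldots,y_d)$ with $\pi(y_j)=\pi(x)$, and fix neighborhoods $U_{m_j}\ni y_j$ with $y_j\in W_{m_j}\Subset U_{m_j}$. Then $x\in\pi^{-1}\pi(\overline{W_{m_j}})$ for all $j$, so membership in $E$ forces some $n$ with $T^{p_j(n)}x\in U_{m_j}$ for all $j$. Hence $(y_1,\ldots,y_d)\in L^C_x$.

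The only delicate step is the density check, namely matching the closed-set saturation condition with the open-set hypothesis of Theorem \ref{main thm}. The $W\Subset U$ device handles this, and all the dynamical content is in Theorem \ref{main thm} itself.
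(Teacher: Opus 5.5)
Your argument is correct, but it takes a different route from the paper. The paper simply invokes Proposition \ref{wTCF}, the product form of Proposition \ref{equi of wtcf}. Its proof takes $\Omega$ to be the continuity points of the lower semicontinuous map $x\mapsto L^C_x$, intersected with the preimage of the continuity points of the upper semicontinuous fibre map $y\mapsto\pi^{-1}(y)$. Continuity of the fibre map at $\pi(x)$ is what makes $\pi(x)$ an interior point of $\pi(B_\eta(x))$ and of each $\pi(B_\delta(x_j))$, so the hitting property can be applied to small balls around $x$ and the $x_j$.

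You instead build the residual set by hand as a countable intersection of open dense sets $E_{\mathbf m}$. The pair device $\overline{W}\subseteq U$, together with closedness of $\pi$, replaces the semicontinuity argument: it makes the exceptional condition closed while still forcing $\pi(V_0)\subseteq\bigcap_j\pi(U_{m_j})$. Because you work with product basic sets, $\pi(V_0)=\prod_i\pi_i(V_0^{(i)})$. So the coordinatewise hypothesis $\bigcap_{j=0}^d\pi_i(V^{(i)}_j)\neq\emptyset$ of Theorem \ref{main thm} can be read off directly. You need neither the reduction from arbitrary open subsets of $\prod_i X_i$ to products nor any interior-point discussion.

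The paper's route gives a reusable equivalence (it is invoked again for Theorem \ref{generalization of main1}) and a canonical description of $\Omega$. Yours gives a self-contained, more elementary proof that uses only compactness and second countability.

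When you write it up, delete the abandoned ``enlargement'' paragraph. Also state explicitly that the countable family of pairs $(W,U)$ is chosen so that every point has arbitrarily small such pairs around it. This is possible by regularity of metric spaces, and it is exactly what your final verification step uses.
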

	
	Indeed, We prove a more general version of this result (Theorem \ref{generalization of main1}): Let $(X_{i,l_i},T_i)$ be the $d$-step wTCF for $C$ of $(X_i,T_i)$ for any $1\leq i\leq k$ where $X_{i,l}$ be the maximal $l$-step pro-nilfactor of $X_i$ respectively, then $(\prod_{i=1}^{k}X_{i,l_i},\prod_{i=1}^{k}T_i)$ is a $d$-step wTCF for $C$ of $(\prod_{i=1}^{k}X_{i},\prod_{i=1}^{k}T_i)$.

    \medskip
	In the local entropy theory, Theorem \ref{main thm} gives an application in the independent tuple along the family of polynomials. The notion of independence was firstly introduced and studied in \cite{KL07,KL16}. 
	Let $(X,T)$ be a t.d.s. and  $\mathcal{U} = (U_1,\dots,U_k)$ a subsets of $X$, we say that a subset $F \subseteq \mathbb{N}$ is an \emph{independence set} for $\mathcal{U}$ if for any non-empty finite subset $J \subseteq F$ and any $s = (s(j): j \in J) \in \{1,\dots,k\}^J$, we have $
	\bigcap_{j \in J} T^{-j} U_{s(j)} \neq \emptyset.$
	For a family $\F$, we call a tuple $\mathbf{ x}=(x_1,\ldots,x_k)$ is {\it independent tuple along $\F$} if for every product neighborhood $U_1\times \dots\times U_k$ of $\mathbf{x}$ the tuple $(U_1,\dots,U_k)$ has an independence set in $\F$.

	Let $\mathcal{A}:=\{p_1,p_2,\cdots,p_n,\dots\}$ be a family of distinct polynomials vanishing at $0$, for $d\in\N,n\in\Z$ we define the set 
	$\{p_{1}(n), p_{2}(n), \ldots, p_{d}(n)\}$ as a \emph{$\A_d$-set}. 
	Let $\mathcal{F}_{d\text{-}\A}$ denote the family of sets that contain infinitely many $\A_d$-sets. 
	Furthermore, let $\mathcal{F}_{\infty\text{-}\A}$ 
	denote the family of sets that contain at least one $\A_d$-set for each $d\in\mathbb{N}$. For $k\geq 2$ and a tuple $\mathbf{ x}=(x_1,\ldots , x_k) \in X^k$, $d\in \mathbb{N}\cup \{\infty\}$,
	we shall denote  the set of all independent $k$-tuples along $\mathcal{F}_{d\text{-} \A}$ 
	by $\mathrm{Ind}_{d \text{-} \A}^{(k)}(X)$.
	
	As an appplication of Theorem \ref{main thm}, we can character the complexity of most fibers in minimal systems along $\infty$-step nilfactor.

	\begin{thm}\label{main2}
		Let $\mathcal{A}:=\{p_1,p_2,\cdots,p_n,\dots\}$ be a family of distinct polynomials vanishing at $0$ and let $(X,T)$ be a minimal system.
		Then there exists a dense $G_\delta$ subset $\Omega$ of $X$ such that for any $x\in \Omega$ and any $k\ge 2$,
		\begin{equation*}
			(\mathbf{RP}^{[\infty]}[x])^{(k)}\subseteq Ind_{\infty \text{-} \A}^{(k)}(X),
		\end{equation*}
		where $\mathbf{RP}^{[\infty]}[x]=\{x'\in X:(x,x')\in \mathbf{RP}^{[\infty]}(X)\}$.
	\end{thm}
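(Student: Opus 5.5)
The plan is to derive Theorem \ref{main2} from Theorem \ref{main thm}, applied with many copies of the same system $(X,T)$ and with $\pi=\pi_\infty:X\to X_\infty$. In that setting $R_\pi=\mathbf{RP}^{[\infty]}$, so the hypothesis of Theorem \ref{main thm} holds in every coordinate.

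\emph{Choice of $\Omega$.} Let $\pi:X\to X_\infty$ be the factor map. Since $(X,T)$ is minimal, the map $y\mapsto \pi^{-1}(y)$ is upper semicontinuous, so its continuity points form a dense $G_\delta$ set $Y_0\subseteq X_\infty$. Put $\Omega=\pi^{-1}(Y_0)$; I expect this is dense $G_\delta$, using the standard fact that factor maps between minimal systems are semi-open. The property of $\Omega$ we use is the following. Let $x\in\Omega$ and $y=\pi(x)$. If $W\subseteq X$ is open and $W\cap\pi^{-1}(y)\neq\emptyset$, then $\pi(W)$ contains a neighbourhood of $y$; this follows from lower semicontinuity of the fibre map at $y$.

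\emph{Reduction.} Fix $x\in\Omega$, $k\ge2$, and $x_1,\dots,x_k\in\mathbf{RP}^{[\infty]}[x]$, so that $\pi(x_i)=y$ for all $i$. Fix a product neighbourhood $U_1\times\cdots\times U_k$. We must find an independence set $F$ for $(U_1,\dots,U_k)$ containing an $\A_d$-set for every $d$. We build finite sets $F_1\subseteq F_2\subseteq\cdots$ inductively and take $F=\bigcup_m F_m$. Each $F_m$ is an independence set and contains an $\A_m$-set. In addition, every $s\in\{1,\dots,k\}^{F_m}$ satisfies the invariant
\[
W_s:=\bigcap_{j\in F_m}T^{-j}U_{s(j)}\quad\text{is open and meets }\pi^{-1}(y).
\]
Start with $F_0=\emptyset$ and $W_\emptyset=X$. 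Independence of $F$ follows because every finite $J\subseteq F$ lies in some $F_m$. An intersection over a sub-index set $J\subseteq F_m$ contains the full intersection over $F_m$ for any extension of $s$ to $F_m$, so it suffices to check full assignments on $F_m$.

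\emph{Inductive step.} Given $F_m$, index the pairs $(s,t)$ with $s\in\{1,\dots,k\}^{F_m}$ and $t\in\{1,\dots,k\}^{m+1}$. For each pair take a copy $X_{(s,t)}=X$ and the open sets
\[
V_0=W_s,\qquad V_l=U_{t(l)}\quad (1\le l\le m+1).
\]
Each $U_i$ contains $x_i$, so $y\in\pi(U_i)$ for every $i$. By the invariant and the choice of $\Omega$, $y$ is also interior to $\pi(W_s)$. Hence $\bigcap_l\pi(V_l)\ni y$, and Theorem \ref{main thm} with the polynomials $p_1,\dots,p_{m+1}$ gives one $n$, which we may take large, such that $W_s\cap\bigcap_l T^{-p_l(n)}U_{t(l)}\neq\emptyset$ for all $(s,t)$ simultaneously. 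Set $F_{m+1}=F_m\cup\{p_1(n),\dots,p_{m+1}(n)\}$. Taking $n$ large makes these values new elements with $p_l(n)$ distinct, since the $p_l$ are distinct and non-constant.

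\emph{Main obstacle.} The delicate point is re-establishing the invariant: the new sets $W_{s,t}$ must meet the fibre $\pi^{-1}(y)$, not merely be nonempty. My first attempt is to shrink $V_0$ to $W_s\cap\pi^{-1}(B)$ for smaller and smaller balls $B$ around $y$, still applying Theorem \ref{main thm}. One then passes to a limit using that the $V_l=U_{t(l)}$ are neighbourhoods of points $x_{t(l)}$ in the fibre, and that each $W_{s,t}$ is open.

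If this fails, the fallback is to weaken the invariant. The alternative requires only that there be a common nonempty open set $O\subseteq X_\infty$ with $O\subseteq\pi(W_s)$ for all $s$ and $O\subseteq\bigcap_i\pi(U_i)$. One then runs Theorem \ref{main thm} with $V_0=W_s\cap\pi^{-1}(O)$, and uses semi-openness together with Theorem \ref{main thm'} (the product wTCF property) applied to the finite product of copies of $X$ to produce the next common open set.
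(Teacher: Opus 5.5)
Your overall architecture (many copies of $X$, a tree of labellings, one application of the product recurrence theorem per stage) matches the paper's Lemma \ref{open-infty-step}. Choosing $\Omega$ via continuity points of $y\mapsto\pi^{-1}(y)$ is a legitimate substitute for the paper's O-diagram; the paper instead proves the result for the open map $\pi^*$, takes $\Omega$ to be $\pi^{-1}$ of the injectivity points of $\tau$, and pulls back through $\sigma$ with Proposition \ref{factor-ind}.

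\textbf{The gap.} The inductive step is not closed. You never show that the new sets $W_{s,t}$ meet $\pi^{-1}(y)$, and neither proposed route does so.
\begin{itemize}
\item In the first route, the integer $n_B$ supplied by Theorem \ref{main thm} depends on the ball $B$. A limit of points $w_B\in W_s\cap\pi^{-1}(B)$ does lie in the fibre, but there is no single $n$ with $T^{p_l(n)}w\in U_{t(l)}$. Openness of $W_{s,t}$, which itself depends on $n$, gives nothing.
\item More fundamentally, the invariant asks that a family of points in one prescribed fibre return simultaneously. That is a non-generic configuration in the product, and the saturation results, all valid only on residual sets, do not control it.
\item The fallback demands $O\subseteq\pi(W_{s,t})$ for a common open $O$ after each step. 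Theorem \ref{main thm} only yields $W_{s,t}\cap\pi^{-1}(O)\neq\emptyset$, and the projections of these small open sets need not share an open subset. Theorem \ref{main thm'}, again a generic-point statement, does not supply one.
\end{itemize}

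\textbf{The fix.} Your fallback is one observation from working: the invariant you need is exactly what Theorem \ref{main thm} returns. Since $x\in\Omega$, there is an open $O\ni y$ with $O\subseteq\bigcap_i\pi(U_i)$. Maintain only $W_s\cap\pi^{-1}(O)\neq\emptyset$ for all $s$, starting from $\pi^{-1}(O)$. With $V_0=W_s\cap\pi^{-1}(O)$ and $V_l=U_{t(l)}$ you get $\pi(V_0)\subseteq O\subseteq\pi(V_l)$, so $\bigcap_l\pi(V_l)=\pi(V_0)\neq\emptyset$. The conclusion of the theorem is then precisely $W_{s,t}\cap\pi^{-1}(O)\neq\emptyset$. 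This is the paper's device: replace $U_i$ by $U_i\cap\pi^{-1}(W)$ so that every $\pi(U_i)=W$, and keep $U_{\eta(0)}$ in every intersection so that it projects into $W$.

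\textbf{Two further remarks.}
\begin{enumerate}
\item Your $\pi$ need not be open, so invoke Theorem \ref{main thm} only when $\bigcap_l\pi(V_l)$ has nonempty interior; here $\pi(V_0)$ does, by semi-openness. That version reduces to Lemma \ref{keyopen} through the injectivity points of $\tau$. A single common point is not enough for non-open $\pi$. For example, take a Sturmian system over its rotation factor $X_\infty$. Let $V_0=V_2$ and $V_1$ be small neighbourhoods of the two preimages $x^+$ and $x^-$ of a doubled point $\theta$, so that $\bigcap_j\pi(V_j)=\{\theta\}$. With $p_1(n)=n$ and $p_2(n)=2n$, no $n$ gives $V_0\cap T^{-n}V_1\cap T^{-2n}V_2\neq\emptyset$.
\item You need not take $n$ large, which Theorem \ref{main thm} does not provide anyway. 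Coincidences among the values $p_l(n)$ and old elements are harmless, since nonemptiness is verified for every labelling of the multiset.
\end{enumerate}
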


	\noindent{\bf Organization of the paper.} We organize the paper as follows. In Section 1, we give the motivation of this paper and state the main results. In Section 2 we introduce some necessary notions and some known facts to be used in the paper. In Section 3, we prove Theorem \ref{main thm} and Theorem \ref{main thm'}. In Section 4, we apply Theorem \ref{main thm} to demonstrate a proof of Theorem \ref{main2}.

	\medskip
	
	\noindent{\bf Acknowledgements.} The authors would like to thank Professors Xiangdong Ye and Jiahao Qiu for useful discussions. This research is supported by NSFC Tianyuan Youth Grant (12526551) and Shanghai Sailing Program (24YF2711500).

	\section{preparation}
	In this section we gather definitions and preliminary results that will be necessary later on.
	
	\subsection{Topological dynamical systems}
	A \emph{topological dynamical system} (t.d.s. for short) is a pair $(X,T)$, where $X$ is a compact metric space with a metric $\rho$ and $T \colon X \to X$ is a homeomorphism. For $x \in X$, $\mathcal{O}(x,T) = \{T^n x : n \in \mathbb{Z}\}$ denotes the \emph{orbit} of $x$. A t.d.s. $(X,T)$ is called \emph{minimal} if every point has dense orbit in $X$. A \emph{homomorphism} between t.d.s.\ $(X,T)$ and $(Y,T)$ is a continuous onto map $\pi \colon X \to Y$ which intertwines the actions; one says that $(Y,T)$ is a \emph{factor} of $(X,T)$, and that $(X,T)$ is an \emph{extension} of $(Y,T)$. One also refers to $\pi$ as a \emph{factor map} or an \emph{extension} and one uses the notation $\pi \colon (X,T) \to (Y,T)$. Then $R_\pi = \{(x_1, x_2) : \pi(x_1) = \pi(x_2)\}$ is a closed invariant equivalence relation, and $Y = X/R_\pi$.
	
	Let $\pi : (X, T) \to (Y, T)$ be a factor map.  One says that $\pi$ is an \textit{open} extension if it is open as a map; is a \textit{proximal} extension if $\pi(x_1) = \pi(x_2)$ implies $(x_1, x_2) \in \mathbf{P}(X, T)$; is a {\it distal} extension if $\pi(x_1) = \pi(x_2)$ and $x_1 \neq x_2$ implies $(x_1, x_2) \notin \mathbf{P}(X/T)$; is an \textit{almost one to one} extension if there exists a dense $G_\delta$ set $X_0 \subseteq X$ such that $\pi^{-1}(\{\pi(x)\}) = \{x\}$ for any $x \in X_0$. 
	
\begin{thm}\cite[Theorem 3.1]{Vee}\label{veech}
	Given a factor map $\pi: X \to Y$ of minimal systems $(X,T)$ and $(Y,S)$, there exists a commutative diagram of factor maps (called O-diagram)
	\[
	\begin{CD}
		X & @<\sigma<< & X^* \\
		@V\pi VV & & @V\pi^* VV \\
		Y & @<\tau<< & Y^*
	\end{CD}
	\]
	such that
	\begin{enumerate}
		\item[(a)] $\sigma$ and $\tau$ are almost one to one extensions;
		\item[(b)] $\pi^*$ is an open extension.
	\end{enumerate}
	We note that this diagram is canonical. In particular, when the map $\pi$ is open, we have $X^* = X$.
\end{thm}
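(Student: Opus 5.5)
The plan is the classical hyperspace construction. Let $2^X$ be the space of nonempty closed subsets of $X$ with the Hausdorff metric, and let $T$ act on it in the natural way. The map $\Phi:Y\to 2^X$, $y\mapsto\pi^{-1}(y)$, is upper semicontinuous and equivariant. By the standard theorem on semicontinuous maps into hyperspaces, its set of continuity points $Y_0$ is a dense $G_\delta$ subset of $Y$, and it is $T$-invariant.

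\textbf{Construction of $Y^*$.} Define
\[
Y^*=\overline{\{(y,\pi^{-1}(y)):y\in Y_0\}}\subseteq Y\times 2^X,
\]
with the diagonal action, and let $\tau$ be the first-coordinate projection. Every point of $Y^*$ has the form $(y,A)$ with $A\subseteq \pi^{-1}(y)$, by upper semicontinuity. If $y\in Y_0$, continuity of $\Phi$ at $y$ forces $A=\pi^{-1}(y)$. Hence $\tau^{-1}(y)$ is a singleton for every $y\in Y_0$.

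Minimality of $Y^*$ then follows. Any nonempty closed invariant subset of $Y^*$ projects onto the minimal system $Y$. It therefore contains the unique point over each $y\in Y_0$, and these points are dense in $Y^*$ by definition. So $\tau$ is an almost one to one extension.

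\textbf{Construction of $X^*$.} Define
\[
X^*=\{(x,(y,A))\in X\times Y^*: x\in A\},
\]
which is closed and invariant. Let $\sigma(x,(y,A))=x$ and $\pi^*(x,(y,A))=(y,A)$. The diagram commutes because $x\in A\subseteq\pi^{-1}(y)$. Moreover $\sigma$ is onto and $\pi^*$ is onto.

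\textbf{Openness of $\pi^*$.} The fibre of $\pi^*$ over $(y,A)$ is $A\times\{(y,A)\}$, and the fibre map $(y,A)\mapsto A$ is continuous on $Y^*$. I will use the general fact that a surjection whose fibre map into the hyperspace is continuous is open. Concretely, if $(y_n,A_n)\to(y,A)$ and $x\in A$, then Hausdorff convergence $A_n\to A$ gives points $x_n\in A_n$ with $x_n\to x$. This is exactly openness of $\pi^*$.

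\textbf{Minimality of $X^*$ and $\sigma$ almost one to one.} This is the step where care is needed, and I expect it to be the main obstacle.
\begin{itemize}
\item Fibres of $\sigma$: for $x$ with $\pi(x)\in Y_0$, we have $\sigma^{-1}(x)=\{(x,(\pi(x),\pi^{-1}(\pi(x))))\}$, a singleton, because $\tau^{-1}(\pi(x))$ is a singleton.
\item Density: the points lying over $\tau^{-1}(Y_0)$ are dense in $X^*$. Given $(x,(y,A))\in X^*$, choose $y_n\in Y_0$ with $(y_n,\pi^{-1}(y_n))\to(y,A)$. By the same Hausdorff-approximation argument, pick $x_n\in\pi^{-1}(y_n)$ with $x_n\to x$.
\item Conclusion: a minimal subset $M\subseteq X^*$ satisfies $\sigma(M)=X$, since $X$ is minimal. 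It therefore contains the unique preimage of every $x\in\pi^{-1}(Y_0)$. These preimages are exactly the dense set above, so $M=X^*$. Thus $X^*$ is minimal and $\sigma$ is almost one to one, since $\pi^{-1}(Y_0)$ is a dense $G_\delta$ in $X$ ($\pi$ being a factor map of minimal systems).
\end{itemize}

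\textbf{The open case.} If $\pi$ is open, then $\Phi$ is continuous everywhere, so $Y_0=Y$. Then $Y^*$ is the graph of $\Phi$, both $\tau$ and $\sigma$ are isomorphisms, and $X^*=X$.
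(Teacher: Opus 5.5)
Your proof is correct and is essentially the standard argument. The paper does not prove this statement itself but quotes it from Veech, and Veech's construction is the one you give: $Y^*$ is the closure of the graph of $y\mapsto\pi^{-1}(y)$ over its dense $G_\delta$ set of continuity points, $X^*=\{(x,(y,A)):x\in A\}$, minimality and almost one-to-oneness come from the singleton fibres over $Y_0$, and openness of $\pi^*$ comes from continuity of the fibre map $(y,A)\mapsto A$. The two points you leave implicit, surjectivity of $\sigma$ and density of $\pi^{-1}(Y_0)$, follow at once from minimality of $X$.
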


\subsection{Nilmanifolds and nilsystems.} Let $G$ be a group. For $g, h \in G$ and $A, B \subset G$, we write $[g, h] = ghg^{-1}h^{-1}$ for the commutator of $g$ and $h$ and $[A, B]$ for the subgroup spanned by $\{[a, b] : a \in A, b \in B\}$. The commutator subgroups $G_j$, $j \ge 1$, are defined inductively by setting $G_1 = G$ and $G_{j+1} = [G_j, G]$. Let $d \ge 1$ be an integer. We say that $G$ is \textit{$d$-step nilpotent} if $G_{d+1}$ is the trivial subgroup.

Let $G$ be a $d$-step nilpotent Lie group and $\Gamma$ be a discrete cocompact subgroup of $G$. The compact manifold $X = G/\Gamma$ is called a \textit{$d$-step nilmanifold}. The group $G$ acts on $X$ by left translations and we write this action as $(g, x) \mapsto gx$. The Haar measure $\mu$ of $X$ is the unique probability measure on $X$ invariant under this action. Let $\tau \in G$ and $T$ be the transformation $x \mapsto \tau x$ of $X$. Then $(X, \mu, T)$ is called a \textit{$d$-step nilsystem}. In the topological setting we omit the measure and just say that $(X, T)$ is a $d$-step nilsystem.
In the topological setting we omit the measure and just say that $(X, T)$ is a $d$-step nilsystem. A $d$-step pro-nilsystem is an inverse limit of $d$-step nilsystems.
	
	\subsection{Regionally proximal relation of order $\infty$}
		Let $(X,T)$ be a t.d.s.. A pair $(x,y) \in X \times X$ is \emph{proximal} if
	\[
	\inf_{n \in \mathbb{Z}} \rho(T^n x, T^n y) = 0
	\]
	and \emph{distal} if it is not proximal. Denote by $\mathbf{P}(X)$ the set of all proximal pairs of $X$. The t.d.s. $(X,T)$ is \emph{distal} if $(x,y)$ is a distal pair whenever $x,y \in X$ are distinct.
	
	\begin{defn}
		Let $(X,T)$ be a t.d.s. and $d \in \mathbb{N}$. The \emph{regionally proximal relation of order $d$} is the relation $\mathbf{RP}^{[d]}$($\RP_X^{[d]}$ in case of ambiguity) defined by: $(x,y) \in \mathbf{RP}^{[d]}$ if and only if for every $\delta > 0$, there exist $x',y' \in X$ and $\vec{n} \in \mathbb{N}^d$ such that $
		\rho(x,x') < \delta, \rho(y,y') < \delta$, 
		and
		\[
		\rho(T^{\vec{n} \cdot \varepsilon} x', T^{\vec{n} \cdot \varepsilon} y') < \delta, \  \forall \varepsilon \in \{0,1\}^d \setminus \{\vec{0}\}.
		\]
		For any $x\in X$, denote $\mathbf{RP}^{[\infty]}[x]=\{x'\in X:(x,x')\in \mathbf{RP}^{[\infty]}\}$. 
	\end{defn}

	It is known in \cite{SY} that for any minimal system $(X,T)$ and $d\in\N$, $\RP^{[d]}(X)$ is a closed invariant equivalence relation, and  $(X,T)$ is a \emph{$d$-step pro-nilsystem} if and only if $\mathbf{RP}^{[d]}$ is trivial\cite{SY}. The quotient of $(X,T)$ under $\RP_X^{[d]}$ is the \emph{maximal $d$-step pro-nilfactor} of $X$.  
	For any minimal t.d.s. $(X,T)$,  $\RP^{[\infty]}=\bigcap_{d\geq 1}\RP^{[d]}$ is also a closed invariant equivalence relation. 
	\begin{defn}
		A minimal system $(X,T)$ is an \emph{$\infty$-step pro-nilsystem}, if the equivalence relation $\mathbf{RP}^{[\infty]}$ is trivial, i.e., coincides with the diagonal. We say the quotient under $\mathbf{RP}^{[\infty]}$ is the \emph{maximal $\infty$-step pro-nilfactor} of $X$.
	\end{defn}
	
Let $\pi:(X,T)\ra(Y,T)$, then $(Y,T)$ is a $\infty$-step pro-nilsystem if and only if $R_\pi\subseteq \RP^{[\infty]}$. The following structure theorems characterize the $\infty$-step pro-nilsystem. 
	
	\begin{thm}\cite{DDMSY}
		A minimal system is an $\infty$-step pro-nilsystem if and only if it is an inverse limit of minimal nilsystems.
	\end{thm}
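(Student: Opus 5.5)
The statement is an equivalence, and I would prove the two directions separately. Throughout I would rely on two standard facts. The first is the Host--Kra--Maass structure theorem for finite order: a minimal system has trivial $\mathbf{RP}^{[d]}$ if and only if it is an inverse limit of minimal $d$-step nilsystems, and in particular a minimal $d$-step nilsystem satisfies $\mathbf{RP}^{[d]}=\Delta$. The second is the Shao--Ye result quoted above, that $\mathbf{RP}^{[d]}$ is a closed invariant equivalence relation for every minimal system.

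\emph{($\Leftarrow$).} Let $X=\varprojlim Z_n$ with each $(Z_n,T)$ a minimal nilsystem, say $s_n$-step, and factor maps $\phi_n\colon X\to Z_n$.
\begin{itemize}
\item I would first check from the definition that $\mathbf{RP}^{[d]}$ is mapped into $\mathbf{RP}^{[d]}$ under factor maps. This follows by uniform continuity: the $\delta$-approximating points $x',y'$ and the vector $\vec n$ push forward to approximating data in the factor.
\item Now take $(x,y)\in\mathbf{RP}^{[\infty]}(X)$. For each $n$ we get $(\phi_n x,\phi_n y)\in\mathbf{RP}^{[s_n]}(Z_n)=\Delta$.
\item Since the maps $\phi_n$ separate points of an inverse limit, it follows that $x=y$.
\end{itemize}

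\emph{($\Rightarrow$).} Assume $\mathbf{RP}^{[\infty]}(X)=\Delta$ and write $X_d=X/\mathbf{RP}^{[d]}$.
\begin{itemize}
\item Since $\mathbf{RP}^{[d+1]}\subseteq\mathbf{RP}^{[d]}$, there are natural factor maps $X_{d+1}\to X_d$.
\item The map $X\to\varprojlim X_d$ is continuous and surjective. It is injective because $\bigcap_d\mathbf{RP}^{[d]}=\Delta$. By compactness it is therefore a conjugacy, so $X\cong\varprojlim X_d$.
\item Each $X_d$ is a $d$-step pro-nilsystem, so by the finite-order structure theorem $X_d=\varprojlim_m N_{d,m}$ for minimal $d$-step nilsystems $N_{d,m}$.
\end{itemize}
Composing, we obtain a countable family of minimal nilsystem factors $\psi_{d,m}\colon X\to N_{d,m}$ that jointly separate points of $X$.

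The remaining task, and the main obstacle, is to turn this countable family into a single increasing chain of minimal nilsystem factors. I would fix a point $x_0\in X$. For each $n$, let $Z_n$ be the orbit closure of $(\psi_{d,m}x_0)_{d,m\le n}$ in $\prod_{d,m\le n}N_{d,m}$.
\begin{itemize}
\item A finite product of nilsystems is a nilsystem, on the nilmanifold $\prod G_{d,m}/\prod\Gamma_{d,m}$.
\item By the Leibman--Lesigne theorem, the orbit closure of a point in a nilsystem is a sub-nilmanifold on which the translation is minimal. Hence $Z_n$ is a minimal nilsystem.
\item Since $X$ is minimal, the orbit closure of $x_0$ is all of $X$. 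Therefore $Z_n$ is exactly the image of $X$ under the diagonal map $x\mapsto(\psi_{d,m}x)_{d,m\le n}$, which is a factor map $X\to Z_n$.
\item Coordinate projections give factor maps $Z_{n+1}\to Z_n$ compatible with the maps from $X$.
\end{itemize}
The factors $Z_n$ jointly separate points, because the $\psi_{d,m}$ do. Hence $X\to\varprojlim Z_n$ is a continuous bijection of compact spaces and so a conjugacy.

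The point to verify carefully is the cited orbit-closure theorem, including that the resulting sub-nilmanifold is a genuine nilmanifold $H/\Lambda$ with $H$ a closed subgroup and $\Lambda$ cocompact. I would simply quote it rather than reprove it.
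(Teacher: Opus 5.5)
Your argument is correct. The paper itself does not prove this statement; it only quotes it from \cite{DDMSY}. What you have written is the standard derivation from the finite-order theory: the Host--Kra--Maass structure theorem, the Shao--Ye results, and the Leibman--Lesigne orbit-closure theorem. Your diagonal joinings $Z_n$ correctly pass from the doubly indexed family $N_{d,m}$ to a single inverse sequence.

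Two points of bookkeeping.

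First, the claim that each $X_d=X/\mathbf{RP}^{[d]}$ is a $d$-step pro-nilsystem does not follow merely from $\mathbf{RP}^{[d]}$ being a closed invariant equivalence relation. You need $\mathbf{RP}^{[d]}(X_d)=\Delta$. That is Shao--Ye's lifting property $(\pi\times\pi)\,\mathbf{RP}^{[d]}(X)=\mathbf{RP}^{[d]}(Y)$ for factor maps of minimal systems, or equivalently the fact recorded in the paper that $X/\mathbf{RP}^{[d]}$ is the maximal $d$-step pro-nilfactor. That is the result to cite at this step.

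Second, minimality of $Z_n$ comes for free, since $Z_n$ is the image of the minimal system $X$ under $x\mapsto(\psi_{d,m}x)_{d,m\le n}$. So the only thing you need from the orbit-closure theorem is that $Z_n$ is a sub-nilmanifold $Hx$ with the translation $\tau\in H$. You can always arrange this by enlarging $H$ to the closed subgroup generated by $H$ and $\tau$, using $\tau Z_n=Z_n$. This makes $(Z_n,T)$ a nilsystem in the paper's sense, since no connectedness of the group is required.

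You also use implicitly that $X\to\varprojlim X_d$ and $X\to\varprojlim Z_n$ are surjective. This is the usual argument with nested nonempty compact fibres, and is worth stating explicitly.
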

	
	\subsection{Families}
	
	Recall that a collection $\mathcal{F}$ of subsets of $\mathbb{Z}$ is a \emph{family}
	if it is hereditary upward, i.e.,
	$F_1 \subseteq F_2$ and $F_1 \in \mathcal{F}$ imply $F_2 \in \mathcal{F}$.
	A family $\mathcal{F}$ is called \emph{proper} if it is neither empty nor the entire power set of $\mathbb{Z}$,
	or, equivalently if $\mathbb{Z}\in \mathcal{F}$ and $\emptyset\not\in \mathcal{F}$.
	For a family $\mathcal{F}$ its \emph{dual} is the family
	$\mathcal{F}^*=\{   F\subseteq \mathbb{Z}: F\cap F' \neq \emptyset \;\mathrm{for} \; \mathrm{all}\; F'\in \mathcal{F} \} $.
	It is not hard to see that
	$\mathcal{F}^*=\{F\subseteq \mathbb{Z}:\mathbb{Z}\backslash F\notin \mathcal{F}\}$, from which we have that if $\mathcal{F}$ is a family then $(\mathcal{F}^*)^*=\mathcal{F}$.
	If a family $\mathcal{F}$ is closed under finite intersections and is proper, then it is called a \emph{filter}.
	A family $\mathcal{F}$ has the {\it Ramsey property} if $A = A_1\cup A_2 \in \mathcal{F}$ implies that $A_1 \in \mathcal{F}$
	or $A_2 \in F$. It is well known that a proper family has the Ramsey property if and
	only if its dual $\mathcal{F}^*$ is a filter \cite{FH}.

	For $d\in \mathbb{N}$ and a finite subset $\{c_1, \ldots, c_d\}$ of $\mathbb{Z}$, the
	\emph{finite IP-set of length $d$} (IP$_d$-set for short) generated by $\{c_{1}, \ldots, c_d\}$
	is the set
	\[
	\big\{c_{1}\epsilon_{1}+ \cdots+ c_d\epsilon_d: \epsilon_1,\ldots,\epsilon_d\in \{0,1\}\big\} \backslash \{0\}.
	\]
	The collection of all sets containing finite IP-sets with arbitrarily long lengths is denoted by $\mathcal{F}_{fip}$. It follows from \cite[Lemma 8.1.6]{HSY16} that $\mathcal{F}_{fip}$ has the Ramsey property. i.e., $\mathcal{F}_{fip}^{*}$ is a filter. The next lemma is clear.
	\begin{lem}\label{fip*}
		For any $B_1,\ldots, B_{k}\in \mathcal{F}_{fip}^{*}$ and $A\in \mathcal{F}_{fip}$ we have $A\cap B_1\cap \cdots\cap B_k \neq\emptyset$. 
	\end{lem}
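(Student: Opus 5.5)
We prove the statement of Lemma \ref{fip*} directly from the fact, recalled just before it, that $\mathcal{F}_{fip}^{*}$ is a filter (a consequence of the Ramsey property of $\mathcal{F}_{fip}$ from \cite[Lemma 8.1.6]{HSY16}), together with the definition of the dual family. The argument has two steps.

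First, since $\mathcal{F}_{fip}^{*}$ is a filter, it is closed under finite intersections. An induction on $k$ therefore gives $B:=B_1\cap\cdots\cap B_k\in\mathcal{F}_{fip}^{*}$. The case $k=1$ is trivial, and the inductive step applies the filter property to $B_1\cap\cdots\cap B_{k-1}$ and $B_k$.

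Second, by the definition of the dual, $B\in\mathcal{F}_{fip}^{*}$ means that $B\cap F'\neq\emptyset$ for every $F'\in\mathcal{F}_{fip}$. Taking $F'=A$ gives $A\cap B_1\cap\cdots\cap B_k=A\cap B\neq\emptyset$, as required.

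The only substantive point is the filter property of $\mathcal{F}_{fip}^{*}$. We take it as given from the cited equivalence: a proper family has the Ramsey property if and only if its dual is a filter. Two checks are needed to apply this. Properness of $\mathcal{F}_{fip}$ holds because $\mathbb{Z}$ contains IP$_d$-sets of every length $d$ (for instance, the one generated by $c_j=2^{j}$), while $\emptyset$ contains none. Upward heredity is clear, since containing finite IP-sets of arbitrary length passes to supersets. With these in place there is no real obstacle, and the lemma is an immediate formal consequence.
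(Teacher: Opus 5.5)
Your proposal is correct and is essentially the argument the paper has in mind: it calls the lemma ``clear'' right after noting that $\mathcal{F}_{fip}^{*}$ is a filter. Closing $B_1\cap\cdots\cap B_k$ under the filter property and then using the definition of the dual against $A\in\mathcal{F}_{fip}$ is exactly that reasoning, and your properness check is a harmless extra.
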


	\begin{lem}\cite[Thoerem 4.3]{BL03}\label{pol rec for open set}
		Let $(X,T)$ be a minimal system. Let $p_1,p_2,\cdots,p_k$ be distinct integral polynomials vanishing at $0$. For any $k\in \mathbb{N}$ and any non-empty open subset $U$ of $X$, we have
		\[ 
		\{ n\in \mathbb{Z}: U\cap T^{-p_1(n)}U\cap\cdots\cap T^{-p_k(n)}U\neq\emptyset\}\in \mathcal{F}_{fip}^{*}.
		\]
	\end{lem}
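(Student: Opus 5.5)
The plan is to reduce the statement to a uniform \emph{finite} IP version of polynomial multiple recurrence for open covers, and then use minimality to move the recurrent point into $U$. Set
\[
N(U)=\{ n\in \mathbb{Z}: U\cap T^{-p_1(n)}U\cap\cdots\cap T^{-p_k(n)}U\neq\emptyset\}.
\]
By the description $\mathcal{F}_{fip}^*=\{F:\mathbb{Z}\setminus F\notin\mathcal{F}_{fip}\}$, it suffices to find an $L\in\mathbb{N}$ such that every IP$_L$-set meets $N(U)$. Then $\mathbb{Z}\setminus N(U)$ contains no IP-set of length $L$, hence none of arbitrarily long lengths, so $N(U)\in\mathcal{F}_{fip}^*$.

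First, I use minimality to get a finite cover. Since $(X,T)$ is minimal and $U\neq\emptyset$ is open, $X=\bigcup_{m\in\mathbb{Z}}T^{-m}U$, and compactness gives $m_1,\dots,m_r$ with $X=\bigcup_{i=1}^r T^{-m_i}U$.

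The core input is the topological IP polynomial multiple recurrence theorem of Bergelson--Leibman in its finite form:
\begin{quote}
For the given polynomials $p_1,\dots,p_k$ and any finite open cover $\{W_1,\dots,W_r\}$ of $X$, there is $L$ such that for every IP$_L$-set $E$ there exist $n\in E$, a point $x\in X$ and an index $i$ with $x, T^{p_1(n)}x,\dots,T^{p_k(n)}x\in W_i$.
\end{quote}
I would first get the infinite version, in which $n$ is taken from any infinite IP-set $\mathrm{FS}(c_j)$. This is proved by PET induction on the weight of the polynomial family, using IP-systems, the IP Hales--Jewett / Hindman machinery, and the fact that for polynomials vanishing at $0$ the map $\alpha\mapsto p(n_\alpha)$ is an IP-polynomial. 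I then pass to the finite version by a compactness (diagonal) argument. Suppose that for every $L$ there were a ``bad'' sequence $c^{(L)}_1,\dots,c^{(L)}_L$, meaning no $n$ in its IP$_L$-set is recurrent for the cover. Equivalently, this is a coloring-type statement about finitely many open sets. A diagonal/Ramsey extraction over the finite sets $\{1,\dots,L\}$, using that the conclusion for a given $n$ depends only on finitely many generators, produces an infinite sequence whose IP-set contains no recurrence time. That contradicts the infinite version. (Alternatively one runs the PET induction directly with finite IP-sets, tracking lengths.)

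Now apply this with $W_i=T^{-m_i}U$. For any IP$_L$-set $E$ we get $n\in E$, a point $x$ and an $i$ with $x,T^{p_j(n)}x\in T^{-m_i}U$ for all $j$. Put $y=T^{m_i}x$. Then $y\in U$ and $T^{p_j(n)}y=T^{m_i}T^{p_j(n)}x\in U$, so $n\in N(U)$. The main obstacle is the IP polynomial recurrence theorem itself, specifically the PET induction with IP-systems, together with making the length $L$ uniform over all IP$_L$-sets. The covering and transfer steps are routine.
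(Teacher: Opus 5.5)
The paper gives no proof of this lemma; it quotes it from Bergelson--Leibman. Your outer steps are correct: showing $N(U)\in\mathcal{F}_{fip}^*$ amounts to finding one $L$ such that every IP$_L$-set meets $N(U)$; minimality gives $X=\bigcup_{i=1}^r T^{-m_i}U$; and the transfer $y=T^{m_i}x$ works. However, for a minimal system your ``finite form'' is essentially equivalent to the lemma itself (take $U=W_1$). So everything rests on how you obtain it, and your passage from the infinite IP version to the finite one by a diagonal/Ramsey extraction fails.

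For a fixed system and cover, whether $n$ is a recurrence time depends on the integer $n$ alone. A bad tuple $c^{(L)}_1,\dots,c^{(L)}_L$ therefore only says that the fixed set $B$ of non-recurrence times contains an IP$_L$-set. The entries $c^{(L)}_j$ range over the non-compact set $\mathbb{N}$ and may tend to infinity with $L$, so there is no limit sequence to extract. In fact the implication you need, ``if $B$ contains no infinite IP-set then $B\notin\mathcal{F}_{fip}$'', is false for general sets. Take $B=\bigcup_{L\ge1}\mathrm{FS}(2^{L^2+1},\dots,2^{L^2+L})$. It contains an IP$_L$-set for every $L$, but its blocks occupy disjoint windows of binary digits. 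Hence if $\mathrm{FS}(d_j)\subseteq B$, all $d_j$ and all partial sums $d_1+\cdots+d_m$ would have to lie in the finite block containing $d_1$, which is impossible. So IP$^*$ does not imply membership in $\mathcal{F}_{fip}^*$, and the uniformity in $L$ must come from somewhere else.

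To get uniformity by compactness, you have to compactify over colorings rather than generators, and then you need a stronger infinite theorem. One way is to make the generators formal. In the free abelian group $\mathcal{G}$ on monomials $x_{i_1}\cdots x_{i_e}$, each $p_j(\sum_{i\in\alpha}c_i)$ is the evaluation at $c$ of a formal IP-polynomial $P_j(\alpha)\in\mathcal{G}$. A bad pair, consisting of the coloring $m\mapsto\min\{i:T^mx_0\in W_i\}$ and the tuple $c^{(L)}$, pulls back to a coloring of $\mathcal{G}$ with no monochromatic $\{g,g+P_1(\alpha),\dots,g+P_k(\alpha)\}$ for $\emptyset\neq\alpha\subseteq[L]$. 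Compactness of $[r]^{\mathcal{G}}$ then gives a coloring that is bad for every finite $\alpha$. This contradicts the IP polynomial van der Waerden theorem for IP-polynomial maps into an abelian group, which is more than the $\mathbb{Z}$-action statement you cite. More directly, the finitary polynomial Hales--Jewett theorem of Bergelson--Leibman yields your finite form at once.

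Your parenthetical alternative, running PET induction on finite IP-sets while tracking lengths, is viable in principle, but as stated it just postpones the whole difficulty. A minor point: since the paper allows integer generators, pass to a sub-family of one sign (using $p_j(-n)$ for negative ones) so that the $n_\alpha$ produced is non-zero.
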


	\subsection{Independence}
	
	The notion of \emph{independence} was firstly introduced and studied in \cite{KL07,KL16}.
	It corresponds to a modification of the notion of \emph{interpolating set}
	studied in \cite{GW95,HY06}.

	\begin{defn}
		Let $(X,T)$ be a t.d.s. and $k\geq 2$. Given a tuple $\mathcal{U} = (U_1,\ldots,U_k )$ of subsets
		of $X$ we say that a subset $F\subseteq \mathbb{N}$  is an \emph{independence set} for $\mathcal{U}$ if for any non-empty
		finite subset $ J\subseteq  F$ and any $s=(s(j):j\in J)\in \{1,\ldots,k\}^J$  we have
		\[
		\bigcap_{j\in J}T^{-j}U_{s(j)}\neq \emptyset.
		\]
		
		We shall denote the collection of all independence sets for $\mathcal{U}$
		by $\mathrm{Ind}(U_1 ,\ldots,U_k )$ or $\mathrm{Ind}(\mathcal{U})$.
	\end{defn}
	
	\begin{defn}
		Let $\mathcal{F}$ be a family, $k\geq 2$ and $(X,T)$ be a t.d.s..
		A tuple $(x_1,\ldots,x_k)\in X^k$ is called an {\it independent tuple along $\mathcal{F}$}
		if for any neighborhoods  $U_1,\ldots,U_k$ of $x_1,\ldots,x_k$ respectively, one has $\mathrm{Ind}(U_1,\ldots,U_k)\cap\mathcal{F}\neq\emptyset$.
	\end{defn}

	The following proposition can be verified by definition straightforward.

	\begin{prop}\label{factor-ind}
		Let $\pi:(X,T)\to (Y,T)$ be a factor map of t.d.s., $k\ge 2$ and $\mathcal{F}$ be a family of $\mathbb{N}$.
		If $(x_1,\ldots,x_k )\in X^{k}$ is an independent tuple along $\mathcal{F}$,
		then  $(\pi(x_1),\ldots,\pi(x_k) ) $ is also an independent tuple along $\mathcal{F}$.
	\end{prop}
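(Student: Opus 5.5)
Proposition \ref{factor-ind} follows directly from the definitions, by pulling neighborhoods back along $\pi$. Let $(x_1,\ldots,x_k)\in X^k$ be an independent tuple along $\mathcal{F}$, and write $y_j=\pi(x_j)$. We must show that for arbitrary neighborhoods $W_1,\ldots,W_k$ of $y_1,\ldots,y_k$ in $Y$, the collection $\mathrm{Ind}(W_1,\ldots,W_k)$ meets $\mathcal{F}$.

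First, set $U_j=\pi^{-1}(W_j)$ for $1\le j\le k$. Since $\pi$ is continuous, each $U_j$ is a neighborhood of $x_j$. Because $(x_1,\ldots,x_k)$ is independent along $\mathcal{F}$, there is some $F\in\mathcal{F}$ with $F\in \mathrm{Ind}(U_1,\ldots,U_k)$.

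Second, we check that this same $F$ lies in $\mathrm{Ind}(W_1,\ldots,W_k)$. Take a non-empty finite $J\subseteq F$ and any $s\in\{1,\ldots,k\}^J$. By independence of $F$ for the $U_j$, there is a point $z\in\bigcap_{j\in J}T^{-j}U_{s(j)}$. For each $j\in J$ we then have $T^j z\in \pi^{-1}(W_{s(j)})$. Using the intertwining relation $\pi\circ T^j=T^j\circ\pi$, this gives $T^j\pi(z)\in W_{s(j)}$. Hence $\pi(z)\in\bigcap_{j\in J}T^{-j}W_{s(j)}$, and this intersection is non-empty.

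It follows that $F\in \mathrm{Ind}(W_1,\ldots,W_k)\cap\mathcal{F}$, which completes the argument. There is no real obstacle here. The only points that need care are that preimages of neighborhoods are neighborhoods, which is continuity, and that $\pi$ commutes with every power $T^j$ for $j\in\mathbb{N}$, which follows from the factor-map identity by induction. Surjectivity of $\pi$ is not even needed.
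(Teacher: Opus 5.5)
Your proposal is correct and is exactly the direct verification from the definitions that the paper has in mind; the paper gives no written proof, only the remark that the proposition ``can be verified by definition straightforward.'' Pulling the neighborhoods back along $\pi$, keeping the same independence set $F\in\mathcal{F}$, and pushing the witness point $z$ forward to $\pi(z)$ is the intended argument.
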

	
\subsection{Saturation theorems}
\subsubsection{Linear cases}
Given a map $\pi: X \to Y$ of sets $X$ and $Y$, a subset $L$ of $X$ is called \emph{$\pi$-saturated} if $L = \pi^{-1}(\pi(L))$. Let $\pi: (X,T) \to (Y,T)$ be a factor map of t.d.s. and $d \in \mathbb{N}$. $(Y,T)$ is said to be a \emph{$d$-step topological characteristic factor }(TCF for short) if there exists a dense $G_\delta$ subset $\Omega$ of $X$ such that for each $x \in \Omega$ the orbit closure
\[
L_x^d(X) = \overline{\mathcal{O}}\big((x^{(d)}, T \times \cdots \times T^d\big)
\]
is $\pi^{(d)}$-saturated, where $\pi^{(d)} = \pi \times \cdots \times \pi$ ($d$-times) and $x^{(d)} = (x,\dots,x)$ ($d$-times). That is, $(x_1,\dots,x_d) \in L_x^d(X)$ if and only if $(x'_1,\dots,x'_d) \in L_x^d(X)$ whenever $\pi(x_i) = \pi(x'_i)$ for $i=1,\dots,d$. 
	
	\begin{thm}\cite[Theorem A]{GHSWY}\label{ghswy}
		Let $(X, T)$ be a minimal system and let $\pi : X \to X/\mathbf{RP}^{[\infty]} = X_\infty$ be the factor map. Then there exist minimal systems $X^*$ and $X_\infty^*$ which are almost one to one extensions of $X$ and $X_\infty$ respectively, and a commuting diagram below such that $X_\infty^*$ is a $d$-step TCF of $X^*$ for all $d \ge 2$.
		\[
		\begin{CD}
			X & @<\sigma<< & X^* \\
			@V\pi VV & & @V\pi^* VV \\
			X_\infty & @<\tau<< & X_\infty^*
		\end{CD}
		\]
	\end{thm}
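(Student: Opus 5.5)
The plan is to reduce to an open extension via the O-diagram, prove a multiple-recurrence ``intersection lemma'' across fibres, and then pass to a dense $G_\delta$ set by a Baire category argument.

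\emph{Step 1: make the factor map open.} Apply Theorem \ref{veech} to $\pi:X\to X_\infty$. This produces almost one to one extensions $\sigma:X^*\to X$ and $\tau:X_\infty^*\to X_\infty$ together with an open extension $\pi^*:X^*\to X_\infty^*$. Since $X_\infty$ is an $\infty$-step pro-nilsystem and $\tau$ is almost one to one, one checks that $R_{\pi^*}\subseteq \mathbf{RP}^{[\infty]}_{X^*}$. This uses the fact that $\mathbf{RP}^{[\infty]}$ lifts through almost one to one and proximal extensions, and that $\mathbf{RP}^{[d]}$ of a factor is the image of $\mathbf{RP}^{[d]}$ of the extension. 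From here on I work only with $\pi^*$, writing $X,Y,\pi$ for $X^*,X_\infty^*,\pi^*$.

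\emph{Step 2: the intersection lemma (main step).} Let $V_0,\dots,V_d$ be open sets of $X$ with $\bigcap_j\pi(V_j)\neq\emptyset$. I want some $n\in\mathbb{N}$ with
\[
V_0\cap T^{-n}V_1\cap\cdots\cap T^{-dn}V_d\neq\emptyset .
\]
Using openness of $\pi$, I shrink the $V_j$ so that they all meet a single fibre $\pi^{-1}(y)$. Then $V_j$ contains a point $x_j$ with $(x_0,x_j)\in\mathbf{RP}^{[\infty]}$. I would use the characterization of $\mathbf{RP}^{[\infty]}$ by finite IP-sets: $(x,x')\in\mathbf{RP}^{[\infty]}$ implies that for each neighbourhood $W$ of $x'$ and each $U\ni x$, the return set $\{n: U\cap T^{-n}W\neq\emptyset\}$ lies in $\mathcal{F}_{fip}$, indeed in a robust, IP-like form.

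I then argue by induction on $d$, PET-style. I find a point $z$ near $x_0$ and an $n$ with $T^{jn}z\in V_j$ for all $j$. The inductive hypothesis applied to the differenced system handles $j\ge 2$, and Lemma \ref{fip*} combines the $\mathcal{F}_{fip}$ return set with the $\mathcal{F}_{fip}^*$ multiple-recurrence sets coming from Lemma \ref{pol rec for open set} (the linear case). This combination is the crux: $\mathcal{F}_{fip}^*$ is a filter, so finitely many such recurrence sets intersect an $\mathcal{F}_{fip}$ set.

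\emph{Step 3: Baire category.} Fix a countable base $\{U_m\}$ of $X$. For each tuple $(m_1,\dots,m_d)$, consider the set $E$ of points $x$ such that either $\pi^{-1}\pi(x)$ misses some $\overline{U_{m_j}}$, or $L_x^d$ meets $U_{m_1}\times\cdots\times U_{m_d}$. Openness of $\pi$ makes the first condition open. Step 2, applied with $V_0$ a small neighbourhood of $x$, shows that $E$ contains an open dense set. Intersecting over all tuples gives a dense $G_\delta$ set $\Omega$. For $x\in\Omega$, every product of fibres $\pi^{-1}(y_1)\times\cdots\times\pi^{-1}(y_d)$ with $(y_1,\dots,y_d)\in\pi^{(d)}(L_x^d)$ lies in $L_x^d$; this uses minimality of $L_x^d$ over the diagonal orbit and closedness. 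That is exactly $\pi^{(d)}$-saturation, so $X_\infty^*$ is a $d$-step TCF of $X^*$.

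I expect Step 2 to be the main obstacle, and specifically the construction of the $\mathcal{F}_{fip}$ return set from the $\mathbf{RP}^{[\infty]}$ relation that is uniform enough to intersect with the multiple-recurrence filter. I would first try to establish it via the dynamical-parallelepiped (cube) description of $\mathbf{RP}^{[d]}$ from \cite{SY}.
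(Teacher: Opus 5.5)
\paragraph{Verdict.} The paper does not prove this statement; it quotes it from \cite{GHSWY}. Your Step 1 reduction and the Baire bookkeeping in Step 3 are routine. Everything rests on Step 2, and the mechanism you describe there does not work. Lemma \ref{pol rec for open set} only supplies sets of the form $\{n: U\cap T^{-n}U\cap\cdots\cap T^{-dn}U\neq\emptyset\}$, which say that some point of a single open set $U$ returns to $U$. They carry no information about moving from near $x_0$ into $V_2,\dots,V_d$, which are neighbourhoods of other points of the fibre. Choosing, via Lemma \ref{fip*}, an $n\in N(x_0,V_1)$ that also lies in such sets gives you facts about unrelated points. It does not give one $z$ with $T^{jn}z\in V_j$ for every $j$.

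\paragraph{Why the obvious repairs fail.} Letting the inductive hypothesis handle $j\ge 2$ does not help. It gives at best that the set of good $n$ is nonempty, or lies in $\mathcal{F}_{fip}$, which is not a filter. Even when such an $n$ exists, its witness point is not $x_0$. The shifting trick of Lemma \ref{keyopen} does not help either. After fixing $n$ with $U=V_0\cap T^{-n}V_1\neq\emptyset$, passing to $n+m$ while keeping the $V_1$-condition and adding the $V_2$-condition requires one $z$ with $z,T^mz\in U$ and $T^{2m}z\in T^{-2n}V_2$. That is again a mixed recurrence--transport problem for a single point, i.e.\ the original difficulty.

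\paragraph{Where the paper's scheme gets its transport.} The paper does use the scheme ``one $\mathcal{F}_{fip}$ set plus finitely many $\mathcal{F}_{fip}^*$ sets'' in Lemma \ref{keyopen}. It works there only because the coordinates belong to different systems, so recurrence in the old coordinates and transport in the new one concern independent points. Moreover, the transport in the new coordinate, for all $d$ polynomials at once, is Lemma \ref{key}. The paper derives that lemma from Theorem \ref{polsat}, whose linear case is exactly the statement you are trying to prove. What your outline lacks is a genuine single-system argument producing, for a generic $x_0$, one time $n$ and one point $z$ near $x_0$ with $T^{jn}z\in V_j$ for all $j$ simultaneously. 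This is the substantive part of the theorem, and nothing in your sketch produces it.

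\paragraph{Two smaller points in Step 3.}
\begin{itemize}
\item The set of $x$ whose fibre misses $\overline{U_{m_j}}$ is open simply because $\pi(\overline{U_{m_j}})$ is closed. Openness of $\pi$ is needed instead for the density of $E$: it guarantees $\pi(V_0)\cap\bigcap_j\pi(U_{m_j})\neq\emptyset$ whenever $\pi(V_0)\subseteq\bigcap_j\overline{\pi(U_{m_j})}$.
\item $L_x^d$ is in general not $\tau_d$-minimal, so minimality cannot carry you from $(\pi^{-1}\pi(x))^d\subseteq L_x^d$ to saturation. Use instead continuity of $y\mapsto\pi^{-1}(y)$ (from openness of $\pi$) together with $\tau_d$-invariance. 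Suppose $\tau_d^{n_k}x^{(d)}\to(z_1,\dots,z_d)$ and $\pi(z_j')=\pi(z_j)$. Pick $w_{j,k}\in T^{jn_k}\pi^{-1}\pi(x)$ with $w_{j,k}\to z_j'$. Then $(w_{1,k},\dots,w_{d,k})\in\tau_d^{n_k}\bigl((\pi^{-1}\pi(x))^d\bigr)\subseteq L_x^d$, and its limit $(z_1',\dots,z_d')$ lies in $L_x^d$.
\end{itemize}
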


In \cite{QXYY25}, Qiu, Xu, Ye and Yu defined the notion of weak topological characteristic factor (wTCF for short).  
\begin{defn}\label{de wTCF}
	Let $\pi:(X,T)\to (Y,T)$ be a factor map of  t.d.s.  and $d\in\mathbb{N}$.
	We say that $(Y,T)$ is a {\it $d$-step wTCF of $(X,T)$}
	if there exists a dense $G_\delta$ subset $\Omega$ of $X$ such that 
	$(\pi^{-1}(\pi(x))^d \subseteq L_x^d(X)$ for any $x\in \Omega$. Further, we say $(Y,T)$ is an {\it $d$-step wTCF of $(X,T)$} if it is a $d$-step wTCF for every $d\in\N$.
\end{defn}

Now Theorem \ref{ghswy} can be expressed as: for any minimal system $(X,T)$ and $d\in\mathbb{N}$, $(X_\infty,T)$ is a $d$-step wTCF of $(X,T)$.

\medskip
The authors in \cite{QXYY25} also gave a saturation theorem for product systems. 

\begin{thm}\cite[Theorem A]{QXYY25}
	Let $k\in\mathbb{N}$ and $(X_i,T_i),1\leq i\leq k$ be minimal systems.
	For $d\in \mathbb{N}\cup \{\infty\}$, let
	$X_{i,d}$ be the maximal $d$-step pro-nilfactor of $X_i$ respectively. Then 
	$(\prod_{i=1}^{k}X_{i,d},\prod_{i=1}^{k}T_i)$ is a $d$-step wTCF of $(\prod_{i=1}^{k}X_{i},\prod_{i=1}^{k}T_i)$.
\end{thm}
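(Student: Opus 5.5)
The plan is to first reduce the wTCF statement to a "topological multiple recurrence for open sets" statement of the same shape as Theorem \ref{main thm}, but with the linear family $n,2n,\ldots,dn$. Precisely, I would aim to prove the following. Let $\pi_i:(X_i,T_i)\to (X_{i,d},T_i)$ be the factor maps, and let $V_0^{(i)},\ldots ,V_d^{(i)}$ be open subsets of $X_i$ with $\bigcap_j \pi_i(V_j^{(i)})\neq\emptyset$ for every $i$. Then there is a single $n\in\N$ with
\[
V_0^{(i)}\cap T_i^{-n}V_1^{(i)}\cap\cdots\cap T_i^{-dn}V_d^{(i)}\neq\emptyset\quad\text{for all } 1\le i\le k .
\]

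Granting this, I would get the dense $G_\delta$ set $\Omega$ by the usual Baire category argument.
\begin{itemize}
\item Fix a countable base of $\prod_i X_i$. For each finite tuple of basic open sets, the set of $x=(x_1,\ldots,x_k)$ that either fail to have fibre points in the prescribed sets, or whose orbit closure $L_x^d$ meets the corresponding product of sets, is open and dense.
\item Openness of this set needs openness of $\pi_i$. So I would first pass through the O-diagram of Theorem \ref{veech} for each $\pi_i$.
\item Under an almost one-to-one modification $R_{\pi_i^*}$ is still contained in $\RP^{[d]}$, and the $G_\delta$ set pushes down through $\sigma$.
\item Intersecting the countably many open dense sets gives $\Omega$, with $(\pi^{-1}\pi(x))^d\subseteq L^d_x$ for $x\in\Omega$.
\end{itemize}
The case $d=\infty$ follows by intersecting over all $d$.

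For the recurrence statement I would split the return-time set of each coordinate as $N_i\supseteq A\cap B_i$. Here $A\in\mathcal F_{fip}$ is common to all $i$ and each $B_i\in\mathcal F_{fip}^*$; Lemma \ref{fip*} then gives the common $n$. To produce these sets, for each $i$ I would pick $y_i\in\bigcap_j\pi_i(V_j^{(i)})$ and points $x_j^{(i)}\in V_j^{(i)}\cap\pi_i^{-1}(y_i)$. Then $(x_0^{(i)},x_j^{(i)})\in\RP^{[d]}$ for each $j$.
\begin{itemize}
\item By the cube characterisation of $\RP^{[d]}$ (Shao--Ye), nearness of $x_0^{(i)}$ to $x_j^{(i)}$ along the faces of a $d$-cube gives return times containing finite IP-sets of length $d$.
\item I would run this in the product system $\prod_i X_i$ simultaneously. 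Working coordinatewise in the product yields one IP-set $A$ serving all $k$ factors at once, since the dynamical cube space of a product is the product of cube spaces.
\item The $\mathcal F_{fip}^*$ part $B_i$ comes from the multiple recurrence of neighbourhoods in the minimal system $X_i$ (Lemma \ref{pol rec for open set} with linear polynomials). This places the diagonal return times in $\mathcal F^*_{fip}$.
\end{itemize}
The two pieces would be combined by an induction on $d$ in the style of GHSWY: one shrinks $V_0^{(i)}$ at each step so that the new intersection is again of the same form with $d$ replaced by $d-1$.

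The hard step is guaranteeing that the IP-set $A$ is genuinely common to all coordinates while each coordinate uses its own $\RP^{[d]}$ pair. The product $\prod_i X_i$ is not minimal, so the Shao--Ye theory does not apply to it directly. My first attempt would be to choose a minimal subset $M\subseteq\prod_i X_i$ projecting onto each $X_i$. I would then check that $\prod_i\pi_i$ restricted to $M$ still has its relation inside $\RP^{[d]}_M$, by lifting cube points coordinatewise with the minimality of each $X_i$. If this fails, I would instead carry out the induction on $d$ directly, inserting the factor-by-factor $\mathcal F_{fip}^*$ sets at each step and intersecting them via the filter property.
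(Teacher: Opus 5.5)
The paper does not prove this statement; it quotes it from \cite{QXYY25}. The natural benchmark is therefore the template the paper follows for its polynomial analogue: Lemma~\ref{key}, Lemma~\ref{keyopen}, the O-diagram and Proposition~\ref{wTCF}. Your outer layer is essentially sound and parallels Proposition~\ref{wTCF}. That layer consists of the reduction to a recurrence statement for open sets, the passage to the O-diagram, the Baire category argument, and the deduction of $d=\infty$ from finite $d$.

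\textbf{Gap 1: producing the set $A$.} The cube/IP description of $\RP^{[d]}$ only controls parallelepiped configurations $T^{\vec n\cdot\epsilon}$. Equivalently, it tells you that for one pair $(x_0,x_j)$ the set $\{n: T^nx_0\in U\}$, with $U\ni x_j$, contains finite IP-sets. These have bounded length depending on $d$, so the set is not even in $\mathcal{F}_{fip}$, and Lemma~\ref{fip*} would not apply.

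It says nothing about the configuration $T^nz\in V_1,\,T^{2n}z\in V_2,\ldots,T^{dn}z\in V_d$ with one point $z\in V_0$ and one $n$. Handling the pairs $(x_0,x_j)$ one $j$ at a time cannot synchronise $n,2n,\ldots,dn$. That simultaneous statement is exactly the single-system saturation theorem (the $k=1$ case; for $X_\infty$ it is Theorem~\ref{ghswy}). It is a deep result and must be imported.

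The $\mathcal{F}_{fip}$-structure is then manufactured by the IP trick of Lemma~\ref{key}:
\begin{enumerate}
\item Apply saturation to the enlarged family $\{jm\,n: 1\le j\le d,\ m\in I_r\}$, where $I_r$ is the finite IP-set of a fast-growing sequence, so that these coefficients are distinct.
\item Obtain $n_r$ with $T^{jmn_r}z$ close to the fibre points for all $m\in I_r$ at once.
\item Put $A=\bigcup_r\{mn_r: m\in I_r\}$.
\end{enumerate}
For finite $d$ this amplification involves far more than $d$ terms, so it cannot rest on the $d$-step factor alone. Either way, the cube characterisation does not supply the needed input.

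\textbf{Gap 2: assembling the coordinates.} It is false that the cube space of a product is the product of the cube spaces, because one and the same $\vec n$ acts in every coordinate. For example, take $X_1=X_2=\mathbb{T}$ with an irrational rotation. The closure of $\{(z,(T_1\times T_2)^nz)\}$ is $\{((a,b),(a+t,b+t))\}$, not $\mathbb{T}^4$. This is precisely why the product theorem is not formal.

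In fact no common $\mathcal{F}_{fip}$-set is needed. The mechanism of Lemma~\ref{keyopen} inducts on the coordinate index, not on $d$, and uses a time shift. Write $W_l=\bigcap_j\pi_l(V_j^{(l)})$, and suppose $n_{i-1}$ satisfies:
\begin{enumerate}
\item $U^{(l)}:=V_0^{(l)}\cap\bigcap_{j=1}^dT_l^{-jn_{i-1}}V_j^{(l)}\neq\emptyset$ for $l<i$;
\item $W^{(l)}:=W_l\cap\bigcap_{j=1}^dT_l^{-jn_{i-1}}W_l\neq\emptyset$ for $l\ge i$.
\end{enumerate}
Then proceed as follows:
\begin{enumerate}
\item Apply the single-system statement in $X_i$ alone to $V_0^{(i)},T_i^{-n_{i-1}}V_1^{(i)},\ldots,T_i^{-dn_{i-1}}V_d^{(i)}$. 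This is legitimate because their $\pi_i$-images all contain $W^{(i)}$. It yields $A_i\in\mathcal{F}_{fip}$.
\item Intersect $A_i$ with the $\mathcal{F}_{fip}^*$-sets of those $m$ for which $U^{(l)}\cap\bigcap_jT_l^{-jm}U^{(l)}\neq\emptyset$ (for $l<i$) or $W^{(l)}\cap\bigcap_jT_l^{-jm}W^{(l)}\neq\emptyset$ (for $l>i$).
\item Set $n_i=n_{i-1}+m$.
\end{enumerate}
A point of $U^{(l)}\cap\bigcap_jT_l^{-jm}U^{(l)}$ lies in $V_0^{(l)}\cap\bigcap_jT_l^{-jn_i}V_j^{(l)}$. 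This is what links diagonal recurrence of a single open set to your sets $N_l$. Without the shift, your $B_i$ bear no relation to $N_i$. Shrinking $V_0^{(i)}$ removes none of the $d$ constraints, so your proposed induction on $d$ never gets started.

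\textbf{The minimal-subset fallback is unreliable.} Take $\mathbb{Z}_2$-skew products over a common weakly mixing minimal base, with cocycles $c$ and $c+1$; both can be weakly mixing. Yet the map $((z,i),(z,j))\mapsto i+j$ shows that minimal subsets $M$ of their product have eigenvalue $-1$. Hence $R_{(\pi_1\times\pi_2)|_M}\not\subseteq\RP^{[d]}_M$. In any case, generic points of $\prod_iX_i$ need not lie in any minimal subset.
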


\subsubsection{Polynomial extensions}
	The notion of TCF of order $d$ can be extended to general polynomials. 
	Note that we say polynomials $p$ and $q$ are {\it distinct} if $p-q$ is not constant (it is called {\it essentially distinct} in \cite{Le05A}). Let $C=\{p_1,\ldots, p_d\}$ be a set of distinct non-constant integral polynomials vanishing at $0$. 
	Given a factor map $\pi: (X,T)\rightarrow (Y,T)$ and $d\ge 2$,
	the t.d.s. $(Y,T)$ is said to be a {\em $d$-step TCF for $C$ 
		of $(X,T)$}, if there exists a dense $G_\d$
	subset $\Omega$ of $X$ such that for each $x\in \Omega$ the orbit
	closure $L_x^{C}=\overline{\{(T^{p_1(n)}x, \ldots, T^{p_d(n)}x): n\in\Z\}}$ is $\pi^{(d)}=\pi\times \cdots \times
	\pi$ ($d$-times) saturated.
	
	For polynomials, we have the following theorem, which is a polynomial version of Theorem \ref{ghswy}.

	\begin{thm}\cite[Theorem 3.7]{HSY25}\label{hsy}
		 Let $(X,T)$ be a minimal t.d.s., and $\pi: X \to X_\infty$ be the factor map from $X$ to its maximal $\infty$-step pro-nilfactor $X_\infty$ of $X$. Then there are minimal t.d.s. $X^*$ and $X_\infty^*$ which are almost one to one extensions of $X$ and $X_\infty$ respectively, an open factor map $\pi^*$ and a commuting diagram below
		\[
		\begin{CD}
			X & @<\sigma<< & X^* \\
			@V\pi VV & & @V\pi^* VV \\
			X_\infty & @<\tau<< & X_\infty^*
		\end{CD}
		\]
		such that there is a $T$-invariant dense $G_\delta$ subset $X_0^*$ of $X^*$ having the following property: for all $x \in X_0^*$, for any non-empty open subsets $V_1,\dots,V_d$ of $X^*$ with $\pi(x) \in \bigcap_{i=1}^d \pi^*(V_i)$ and distinct non-constant integral polynomials $p_1,p_2,\dots,p_d$ with $p_i(0)=0$, $i=1,2,\dots,d$, there is some $n \in \mathbb{N}$ such that
		\[
		x \in T^{-p_1(n)}V_1 \cap T^{-p_2(n)}V_2 \cap \dots \cap T^{-p_d(n)}V_d.
		\]
	\end{thm}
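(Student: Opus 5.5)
The plan is to reduce the pointwise statement to a set-theoretic recurrence statement for open sets, and to prove the latter by PET induction. First, apply Theorem~\ref{veech} to $\pi:X\to X_\infty$ to obtain the O-diagram with $\sigma,\tau$ almost one to one and $\pi^*$ open. Since $\sigma$ is almost one to one, the relation $R_{\pi^*}$ is still contained in $\mathbf{RP}^{[\infty]}_{X^*}$ up to the proximal (almost one to one) part. Hence $X^*_\infty$ remains controlled by the $\infty$-step pro-nilfactor of $X^*$, and we may work entirely with the open map $\pi^*$.

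Next, fix a countable base $\{B_m\}$ of $X^*$. Using openness of $\pi^*$, for each finite tuple $\mathbf V=(V_1,\dots,V_d)$ of basic sets and each tuple $(p_1,\dots,p_d)$ of distinct non-constant integral polynomials vanishing at $0$ (there are only countably many), consider the open set $W_{\mathbf V}=\bigcap_i \pi^*(V_i)$ and the open set
\[ A_{\mathbf V}=\bigcup_{n\in\mathbb N} T^{-p_1(n)}V_1\cap\cdots\cap T^{-p_d(n)}V_d .\]
Set $G_{\mathbf V}=A_{\mathbf V}\cup \big(X^*\setminus \overline{(\pi^*)^{-1}(W_{\mathbf V})}\big)$. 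Each $G_{\mathbf V}$ is open, and $X_0^*$ will be the intersection of all the $G_{\mathbf V}$ together with their $T$-translates. The intersection is countable, and $T$-invariance is arranged by intersecting over translates. For $x\in X_0^*$ with $\pi^*(x)\in W_{\mathbf V}$, we get $x\in A_{\mathbf V}$, which is the conclusion; general open $V_i$ are handled by shrinking to basic sets. Thus everything reduces to density of $A_{\mathbf V}$ in $(\pi^*)^{-1}(W_{\mathbf V})$. That is, we must show: for every nonempty open $V_0$ with $\pi^*(V_0)\subseteq W_{\mathbf V}$, there is $n$ with
\[ V_0\cap T^{-p_1(n)}V_1\cap\cdots\cap T^{-p_d(n)}V_d\neq\emptyset, \]
which is exactly the one-system case of Theorem~\ref{main thm}.

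To prove this recurrence statement I would argue by PET induction on the weight vector of the family $\{p_1,\dots,p_d\}$, following Bergelson--Leibman. The base case is linear families $p_i(n)=a_in$. Here Theorem~\ref{ghswy} (saturation of $L_x^d$ over $X_\infty^*$) gives the claim after passing to arithmetic progressions. For the induction step, the key input is that $R_{\pi^*}\subseteq \mathbf{RP}^{[\infty]}$. This means that for points $y_0,y_j$ in a common fiber, the return-time sets $N(U,V)$ for neighborhoods contain finite IP-sets of every length, i.e.\ lie in $\mathcal F_{fip}$. One chooses points $y_j\in V_j$ in the fiber over a point of $W_{\mathbf V}$. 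One then uses these $\mathcal F_{fip}$ return sets together with Lemma~\ref{pol rec for open set} (polynomial return times of open sets are in $\mathcal F_{fip}^*$) and Lemma~\ref{fip*}. This produces a shift $m$ such that the derived family $\{p_i(n+m)-p_1(m)-p_1(n)\}$ (after dropping the lowest-weight polynomial) has smaller weight and satisfies the same hypothesis with new open sets whose projections still meet, by openness of $\pi^*$. The induction hypothesis then closes the argument.

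The main obstacle is this induction step. One has to guarantee that the auxiliary open sets produced by differencing still satisfy the fiber condition $\bigcap_j\pi^*(V'_j)\neq\emptyset$. This requires choosing the shift $m$ simultaneously in an $\mathcal F_{fip}$ set, coming from $\mathbf{RP}^{[\infty]}$, and in finitely many $\mathcal F_{fip}^*$ sets, coming from polynomial recurrence in the pro-nilfactor $X^*_\infty$, which is an inverse limit of nilsystems. Lemma~\ref{fip*} is designed precisely for that intersection. I would first try to verify the step on the nilsystem factor, using Leibman's description of polynomial orbit closures on nilmanifolds. I would then lift it via the $\mathbf{RP}^{[\infty]}$ property of the fibers.
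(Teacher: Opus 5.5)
The paper does not prove this statement; it imports it from \cite{HSY25}. So there is no in-paper argument to compare with, and your proposal has to stand on its own.

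Your preliminary steps are sound. In fact $R_{\pi^*}\subseteq R_{\tau\circ\pi^*}=R_{\pi\circ\sigma}=\mathbf{RP}^{[\infty]}_{X^*}$ holds exactly; no ``up to the proximal part'' is needed, since $X_\infty$ remains the maximal $\infty$-step pro-nilfactor of $X^*$. Your Baire-category argument also correctly reduces the pointwise statement to open-set recurrence for the open map $\pi^*$, by the same kind of argument as Proposition~\ref{equi of wtcf}. But this is only a reformulation: the two forms are equivalent. In this paper the open-set form (Lemma~\ref{keyopen} with $k=1$) is itself \emph{derived from} the theorem, through Theorem~\ref{polsat} and Lemma~\ref{key}. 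So all of the content sits in your PET paragraph, and there you give a plan, not an argument. You say yourself that the induction step is left open.

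The mechanism you sketch does not close that step, for three reasons.

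(i) Your derived family $p_i(n+m)-p_1(m)-p_1(n)$ does not vanish at $0$. The van der Corput derivative is $\{p_i(n+m)-p_i(m)-p_1(n)\}\cup\{p_i-p_1\}$, taken over several shifts $m$.

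(ii) More seriously, the weight only drops after you translate by $T^{p_1(n)}$ and give up control of one term; here $T^{p_1(n)}V_0$ becomes $n$-dependent. In the Bergelson--Leibman topological proof this loss is compensated by a compactness (color-focusing type) argument, which works because all targets are the same point or set. You offer no substitute when $V_0,\dots,V_d$ are distinct and tied together only by meeting a common $\pi^*$-fiber. A pure shift $n\mapsto n+m$, as in the proof of Lemma~\ref{keyopen} with $p_j^*(n)=p_j(n+m)-p_j(m)$, does preserve the fiber condition if $m$ lies in the $\mathcal F_{fip}^*$ set that Lemma~\ref{pol rec for open set} gives for $W\subseteq X^*_\infty$. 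But such a shift does not lower the weight.

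(iii) The $\mathcal F_{fip}$ sets coming from $R_{\pi^*}\subseteq\mathbf{RP}^{[\infty]}$ are \emph{linear} return-time sets $N(U,V)$. To combine with Lemma~\ref{fip*} as you intend, you need an $\mathcal F_{fip}$ set of \emph{polynomial} return times $\{n: T^{p_i(n)}z\in V_i \text{ for all } i\}$. That is exactly what Lemma~\ref{key} produces, and it produces it only from Theorem~\ref{polsat}, i.e.\ from the statement you are proving. Similarly, knowing that projections of the new sets meet in $X^*_\infty$ does not make upstairs sets such as $V_0\cap T^{-p_1(m)}V_1\cap\cdots\cap T^{-p_d(m)}V_d$ nonempty; that nonemptiness is again the theorem.

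The missing ingredient is a genuine relative PET step that lifts polynomial recurrence from the pro-nilfactor to $X^*$ across $\mathbf{RP}^{[\infty]}$-fibers. This is precisely what \cite{HSY25} provides, and what your ``verify on the nilsystem, then lift'' leaves unproved.
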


	It is shown in \cite{YY25} that Theorem \ref{hsy} has the following equivalent statement.
	\begin{thm}\label{polsat}
	Let $(X,T)$ be a minimal system, and $\pi:X\rightarrow X_\infty$ be the factor map, where $X_\infty$ is the maximal $\infty$-step
	pro-nilfactor of $X$. Let $d\in\N$ and $p_1, p_2,\ldots, p_d$ be distinct non-constant integral polynomials vanishing at $0$. Then there is a dense $G_\delta$ set $\Omega$ of $X$ such that for any 
	$x\in \Omega$,
	$$(\pi^{(d)})^{-1}\pi^{(d)}(x^{(d)})\subset L_x^{C}=:\overline{\{(T^{p_1(n)}x, \ldots, T^{p_d(n)}x): n\in\Z\}},$$
	where $C=\{p_1,\ldots,p_d\}$.
	Particularly, if $\pi$ is open then $X_\infty$ is the $d$-step TCF  for C, 
	i.e., $(\pi^{(d)})^{-1}\pi^{(d)}(L_x^{C})=L_x^{C}.$
\end{thm}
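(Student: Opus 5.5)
The plan is to derive Theorem \ref{polsat} from the O-diagram of Theorem \ref{hsy} by transporting the recurrence property of the generic set $X_0^*\subseteq X^*$ down to $X$ through the almost one to one maps $\sigma$ and $\tau$. Fix the diagram and the $T$-invariant dense $G_\delta$ set $X_0^*$ of Theorem \ref{hsy}. Let $X_1\subseteq X$ and $Y_1\subseteq X_\infty$ be the dense $G_\delta$ sets of points whose $\sigma$-fibre, respectively $\tau$-fibre, is a singleton. Then set
\[
\Omega=\{x\in X_1:\ \pi(x)\in Y_1,\ \sigma^{-1}(x)\in X_0^*\}.
\]

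The first step is to check that $\Omega$ contains a dense $G_\delta$ set. A factor map between minimal systems is semi-open, so preimages of residual sets under $\sigma$, $\pi$ and $\pi^*$ are residual. Hence $X_1^*:=\sigma^{-1}(X_1)\cap(\pi\sigma)^{-1}(Y_1)$ is residual in $X^*$. The restriction $\sigma|_{\sigma^{-1}(X_1)}\colon\sigma^{-1}(X_1)\to X_1$ is a continuous bijection. Its inverse is continuous, because the fibres are singletons and $X^*$ is compact, so it is a homeomorphism. It follows that $\sigma(X_0^*\cap X_1^*)$ is residual in $X_1$, hence in $X$, and this set is contained in $\Omega$. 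I expect this bookkeeping of residual sets to be the only delicate point. If one wants $\Omega$ itself to be $G_\delta$, one can replace it by a dense $G_\delta$ subset, since the property proved below is inherited by subsets.

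Next, fix $x\in\Omega$ and write $x^*=\sigma^{-1}(x)\in X_0^*$. Let $(y_1,\dots,y_d)$ satisfy $\pi(y_j)=\pi(x)$ for all $j$, and let $U_j$ be open neighbourhoods of $y_j$. Put $V_j=\sigma^{-1}(U_j)$, which is open in $X^*$, and choose $y_j^*\in\sigma^{-1}(y_j)\subseteq V_j$. Commutativity of the diagram gives
\[
\tau\pi^*(y_j^*)=\pi(y_j)=\pi(x)=\tau\pi^*(x^*).
\]
Since $\pi(x)\in Y_1$, the fibre $\tau^{-1}(\pi(x))$ is a singleton, so $\pi^*(y_j^*)=\pi^*(x^*)$. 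Thus $\pi^*(x^*)\in\bigcap_j\pi^*(V_j)$, and Theorem \ref{hsy} produces $n$ with $T^{p_j(n)}x^*\in V_j$ for every $j$. Applying $\sigma$ gives $T^{p_j(n)}x\in U_j$ for every $j$. Since the neighbourhoods $U_j$ were arbitrary, $(y_1,\dots,y_d)\in L_x^C$, which proves the inclusion $(\pi^{(d)})^{-1}\pi^{(d)}(x^{(d)})\subset L_x^C$.

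Finally, for the case where $\pi$ is open, the O-diagram is canonical and gives $X^*=X$ and $X_\infty^*=X_\infty$, so we may work directly with $X_0^*\subseteq X$. Let $z\in L_x^C$ and let $z'$ satisfy $\pi^{(d)}(z')=\pi^{(d)}(z)$, with neighbourhoods $U_j\ni z_j'$. By openness of $\pi$, the sets $\pi(U_j)$ are open neighbourhoods of $\pi(z_j)$. Choose $n$ with $\pi(T^{p_j(n)}x)\in\pi(U_j)$ for all $j$. The fibrewise statement of Theorem \ref{hsy} should then be applied at the points $T^{p_j(n)}x$ rather than at the diagonal, and I expect this to be the main obstacle. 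My first attempt would be to reapply Theorem \ref{hsy} with open sets $V_j$ chosen near $T^{p_j(n)}x$ inside $\pi^{-1}\pi(U_j)$, using the $T$-invariance of $X_0^*$. This should give $L_x^C\cap\prod_j U_j\neq\emptyset$, hence $z'\in L_x^C$.
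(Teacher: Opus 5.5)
Your derivation of the fibre inclusion $(\pi^{(d)})^{-1}\pi^{(d)}(x^{(d)})\subset L_x^C$ is correct. It is also the intended route: the paper gives no proof of its own and attributes the statement to YY25 as a reformulation of Theorem \ref{hsy}. The residual-set bookkeeping via semi-openness and the homeomorphism $\sigma|_{\sigma^{-1}(X_1)}$ is fine, and the singleton $\tau$-fibre over $\pi(x)$ is exactly what forces $\pi^*(y_j^*)=\pi^*(x^*)$.

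The gap is in the saturation clause for open $\pi$. You leave it unproved, and the remedy you sketch would not work. Theorem \ref{hsy} only controls simultaneous returns of a single base point along $p_1(n),\dots,p_d(n)$, so it is a statement about the diagonal $x^{(d)}$. The tuple $(T^{p_1(n)}x,\dots,T^{p_d(n)}x)$ is not diagonal. $T$-invariance of $X_0^*$ only lets you move the base point by one common power of $T$. So applying the theorem ``at the points $T^{p_j(n)}x$'' gives no common return time for the different coordinates.

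The missing idea is to keep the base point $x$ and shift the polynomials instead.
\begin{enumerate}
\item Choose $n$ as you do, using openness of $\pi$, so that $\pi(T^{p_j(n)}x)\in\pi(U_j)$ for all $j$. Pick $u_j\in U_j$ with $\pi(u_j)=\pi(T^{p_j(n)}x)$, and put $w_j=T^{-p_j(n)}u_j$, so that $\pi(w_j)=\pi(x)$ for all $j$.
\item Let $q_j(m)=p_j(m+n)-p_j(n)$. These are again distinct non-constant integral polynomials vanishing at $0$, because $q_i-q_j$ differs from $(p_i-p_j)(\cdot+n)$ by a constant.
\item Your set $\Omega$ does not depend on $C$, since the set $X_0^*$ in Theorem \ref{hsy} works for all polynomial families at once. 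So your first part applied to $\{q_1,\dots,q_d\}$ gives $(w_1,\dots,w_d)\in L_x^{\{q_1,\dots,q_d\}}$.
\item Hence there is some $m$ with $T^{q_j(m)}x\in T^{-p_j(n)}U_j$, i.e.\ $T^{p_j(m+n)}x\in U_j$ for every $j$. Thus $L_x^C\cap\prod_j U_j\neq\emptyset$, and $z'\in L_x^C$.
\end{enumerate}
This argument works for every $x\in\Omega$. It also makes your appeal to $X^*=X$ unnecessary; that step presumes the diagram of Theorem \ref{hsy} is Veech's canonical one, which Theorem \ref{hsy} does not assert.
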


\begin{defn}
	Let $\pi:(X,T)\to (Y,T)$ be a factor map of  t.d.s.  and $d\in\mathbb{N}$. Let $C=\{p_1,\ldots, p_d\}$ be a set of distinct non-constant integral polynomials vanishing at $0$.
	We say that $(Y,T)$ is a {\it $d$-step wTCF for $C$ of $(X,T)$} 
	if there exists a dense $G_\delta$ subset $\Omega$ of $X$ such that 
	$(\pi^{-1}(\pi(x))^d \subseteq L_x^C(X)$ for any $x\in \Omega$. 
\end{defn}

  By similar proof in \cite{QXYY25}, we have the following proposition which gives an equivalent conditions for wTCF for $C$.
\begin{prop}\label{equi of wtcf}
	Let $\pi:(X,T)\to (Y,T)$ be a factor map of  t.d.s., $C=\{p_1, p_2,\ldots, p_d\}$ be distinct non-constant integral polynomials vanishing at $0$ and $d\in\mathbb{N}$. Then the following are equivalent:
	\begin{enumerate}
		\item For any open subsets $V_0,V_1,\ldots,V_d$ of $X$ with
		$\bigcap_{i=0}^d\pi(V_i)$ having non-empty interior, there exists $n\in\mathbb{N}$
		such that $V_0\cap T^{-p_1(n)}V_1\cap\cdots\cap T^{-p_d(n)}V_d\neq\emptyset$.
		\item $Y$ is a $d$-step wTCF for $C$ of $X$.
	\end{enumerate}
\end{prop}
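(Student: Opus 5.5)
The plan is to follow the scheme of \cite{QXYY25} with polynomial iterates in place of linear ones; the implication (2)$\Rightarrow$(1) is the substantive direction and (1)$\Rightarrow$(2) is a Baire category argument. Fix a countable base $\{U_m\}$ of $X$ throughout.

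\textbf{(1)$\Rightarrow$(2).} For a tuple $\mathbf{m}=(m_1,\ldots,m_d)$ of basis indices, set
\[
E_{\mathbf m}=\bigcup_{n\in\mathbb{N}}\Big(\bigcap_{j=1}^d T^{-p_j(n)}U_{m_j}\Big),
\]
which is open, and
\[
F_{\mathbf m}=\Big\{x\in X:\ \pi(x)\in \operatorname{int}\bigcap_{j=1}^d\pi(U_{m_j})\Big\}.
\]
I will prove that $E_{\mathbf m}$ is dense in the open set $F_{\mathbf m}$. Take an open $V_0$ meeting $F_{\mathbf m}$ and set $V_0'=V_0\cap F_{\mathbf m}$. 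Then $\pi(V_0')\cap\bigcap_j\pi(U_{m_j})$ contains $\pi(V_0')$, and I need it to have non-empty interior. If $\pi$ is open this is immediate. Otherwise I will shrink $V_0'$ so that $\pi(V_0')$ has interior, using the fact that $\pi$ is semi-open for minimal $X$ (the image of any non-empty open set has non-empty interior). I then intersect that interior with $\operatorname{int}\bigcap_j\pi(U_{m_j})$ and pull back.

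Applying (1) to $V_0',U_{m_1},\ldots,U_{m_d}$ shows $E_{\mathbf m}\cap V_0\neq\emptyset$. Hence $G_{\mathbf m}=E_{\mathbf m}\cup(X\setminus\overline{F_{\mathbf m}})$ is a dense open set, and I take $\Omega=\bigcap_{\mathbf m}G_{\mathbf m}$.

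To check $\Omega$ works, let $x\in\Omega$ and $x_1,\ldots,x_d\in\pi^{-1}\pi(x)$, with basic neighbourhoods $U_{m_j}\ni x_j$. Then $\pi(x)\in\bigcap_j\pi(U_{m_j})$, but I need $\pi(x)$ in the interior. I handle this by restricting $\Omega$ further to points where $\pi$ is "open at $x$", which is a dense $G_\delta$ set for semi-open maps. For such $x$, $\pi(x)\in\operatorname{int}\pi(U_{m_j})$ for every basic $U_{m_j}$ meeting $\pi^{-1}\pi(x)$. So $x\in F_{\mathbf m}$, hence $x\in E_{\mathbf m}$, and therefore $(x_1,\ldots,x_d)\in L_x^C$.

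\textbf{(2)$\Rightarrow$(1).} Given $V_0,\ldots,V_d$ with $W=\operatorname{int}\bigcap_i\pi(V_i)\ne\emptyset$, the set $\pi^{-1}(W)\cap V_0$ is a non-empty open set. Pick $x\in\Omega$ in it. Since $\pi(x)\in W$, there are $x_j\in V_j\cap\pi^{-1}\pi(x)$. By (2), $(x_1,\ldots,x_d)\in L_x^C$, so some $n\in\mathbb{Z}$ satisfies $T^{p_j(n)}x\in V_j$ for all $j$. This shows $x\in V_0\cap\bigcap_j T^{-p_j(n)}V_j$.

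The main obstacle is that the resulting $n$ lies in $\mathbb{Z}$, while (1) asks for $n\in\mathbb{N}$. I plan to fix this by defining $L_x^C$ using $n\in\mathbb{N}$ in the Baire argument. Alternatively, I can note that $n\mapsto -n$ changes the polynomials to $p_j(-n)$, which are still distinct and vanish at $0$, and that (2) for these symmetric families is obtained the same way. The other delicate points are the semi-openness and openness-at-points facts in the non-open case; I would cite them from \cite{QXYY25} rather than reprove them.
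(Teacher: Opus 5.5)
Your (1)$\Rightarrow$(2) argument is correct and is a mild variant of the paper's. The paper intersects the continuity points of the l.s.c.\ map $x\mapsto L_x^C$ with $\pi^{-1}$ of the continuity points of $y\mapsto \pi^{-1}(y)$, and then approximates in the Hausdorff metric. You replace the first residual set by $\bigcap_{\mathbf m}\big(E_{\mathbf m}\cup (X\setminus \overline{F_{\mathbf m}})\big)$, which has the side benefit of producing positive times directly. Both routes need $\pi$ to be semi-open, so that $\pi^{-1}$ of a residual set is residual. The paper uses this tacitly, and you are right to assume $X$ minimal. For a general t.d.s.\ (1)$\Rightarrow$(2) can fail: take $X=\{a\}\sqcup\{0,1\}^{\mathbb Z}$ with $a$ fixed, $\pi(a)=0^\infty$, and $\pi$ the identity on the shift. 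Then (1) holds, since discarding $0^\infty$ reduces it to cylinders in the full shift. But the isolated point $a$ lies in every dense set and $(0^\infty,\ldots,0^\infty)\notin L_a^C$.

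The genuine gap is in (2)$\Rightarrow$(1), exactly where you flag it, and neither proposed repair closes it.
\begin{itemize}
\item Redefining $L_x^C$ with $n\in\mathbb N$ only shows that (1) is equivalent to a stronger, forward version of (2). It does not derive (1) from (2) as defined, with the closure over $n\in\mathbb Z$.
\item The reflection $n\mapsto -n$ does not help either. For $\tilde p_j(n)=p_j(-n)$ one has $L_x^{\tilde C}=L_x^C$ identically, so hypothesis (2) cannot distinguish $C$ from $\tilde C$, while (1) for $C$ and (1) for $\tilde C$ are different statements. From (2) you only learn that some witnessing $n$ exists; it may be positive, zero or negative.
\end{itemize}
In fact no argument from the definition alone can work. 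For $\pi=\mathrm{id}$, (2) holds in every system (use $n=0$). But (1) with $V_0=\cdots=V_d=U$ is precisely topological polynomial multiple recurrence, which fails for a non-recurrent system: take translation on the two-point compactification of $\mathbb Z$, with $U=\{0\}$ and $p_1(n)=n$. The paper's one-line ``follows from the definition'' passes over the same point.

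What is missing is a recurrence input, which is available once $X$ is minimal.
\begin{itemize}
\item Fix $m\le 0$ and put $q_j(k)=p_j(m+k)-p_j(m)$. These are distinct, non-constant and vanish at $0$.
\item By Lemma~\ref{pol rec for open set}, applied to small open sets, and since an $\mathcal{F}_{fip}^*$ set meets $\{k\ge K\}$ for every $K\ge 1$, the set $\Omega_m$ of $x$ with $\liminf_{k\to+\infty}\max_j\rho(T^{q_j(k)}x,x)=0$ is a dense $G_\delta$.
\item For $x\in\Omega_m$, along such $k$ one has $T^{p_j(m+k)}x=T^{p_j(m)}T^{q_j(k)}x\to T^{p_j(m)}x$. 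So $(T^{p_j(m)}x)_{j}$ lies in the closure of $\{(T^{p_j(n)}x)_{j}: n\ge 1\}$.
\end{itemize}
Hence for $x$ in the residual set $\Omega\cap\bigcap_{m\le 0}\Omega_m$, the forward closure coincides with $L_x^C$. Running your argument with such an $x$ in $V_0\cap\pi^{-1}(W)$ then yields the required $n\in\mathbb N$.
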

\begin{proof}
(2)$\Rightarrow$(1) is follows from the definition.

(1)$\Rightarrow$(2). Let $F : X \to 2^{X^d}$ be the map defined by $x \mapsto L_x^C$. Then $F$ is l.s.c. and  the continuous points of $F_d$ form a residual subset $\Lambda_1$ of $X$. Let $G : Y \to 2^X$ be the map defined by $y \mapsto \pi^{-1}(y)$. Then $G$ is u.s.c. and the continuous points of $G$ form a residual subset $\Lambda_2$ of $Y$.

Set $\Lambda = \Lambda_1 \cap \pi^{-1}(\Lambda_2)$. Then $\Lambda$ is a dense $G_\delta$ subset of $X$. We claim $(\pi^{-1}(\pi(x)))^d \subseteq L_x^C(X)$ for every $x \in \Lambda$, which implies $Y$ is a $d$-step wTCF for $C$ of $X$.

Let $x \in \Lambda$ and $\pi(x_1)=\pi(x_2)= \dots=\pi(x_d)$. Fix $\delta > 0$. As $F$ is continuous at $x$, there is some $\eta_1 > 0$ with $\eta_1 < \delta$ such that whenever $w \in B_{\eta_1}(x)$ one has
\[
\rho_H^d\left(L_w^C, L_x^C \right) < \delta,
\]
where $\rho_H^d$ is the Hausdorff metric on $2^{X^d}$. 
As $G$ is continuous at $\pi(x)$, there is some $\eta_2 > 0$ with $\eta_2 < \delta$ such that whenever $y \in B_{\eta_2}(x)$ one has
\[
\rho_H\left(\pi^{-1}(\pi(y)), \pi^{-1}(\pi(x))\right) < \delta,
\]
where $\rho_H$ is the Hausdorff metric on $2^X$.

Set $\eta = \min\{\eta_1, \eta_2\}$, $\pi(x)$ is an interior point of sets $\pi(B_\eta(x))$, $\pi(B_\delta(x_j)),1\leq j\leq d$, thus by the assumption, there exists $n \in \mathbb{N}$ and $z \in X$ such that
\[
z \in B_\eta(x) \cap T^{-p_1(n)}B_\delta(x_1) \cap \cdots \cap T^{-p_d(n)}B_\delta(x_d).
\]

So we get that
\[
\rho^d\left((T^{p_1(n)} z,\dots,T^{p_d(n)} z), L_x^C\right) < \delta,
\]
and thus
\[
\rho^d\left((x_1, \dots, x_d), L_x^C\right) < 2\delta,
\]
where $\rho^d$ is the metric on $X^d$. As $\delta$ is arbitrary, we conclude that $(x_1, \dots, x_d) \in L_x^C$.

\end{proof}

\section{The proof of Theorem \ref{main thm} and Theorem \ref{main thm'}}
	
	Denote $[k]:=\{1,2,\dots,k\}$. First, we have the following proposition which is the product extension of Proposition \ref{equi of wtcf}.
	\begin{prop}\label{wTCF}
		Let $\pi_i:(X_i,T_i)\to (Y_i,T_i),i\in[k]\}$ be factor maps of t.d.s. and $C=\{p_1, p_2,\ldots, p_d\}$ be distinct non-constant integral polynomials vanishing at $0$. Then the following are equivalent.
		\begin{enumerate}
			\item For any open subsets $V^{(i)}_0,V^{(i)}_1,\ldots,V^{(i)}_d$ of $X$ with
			$\bigcap_{i=0}^d\pi_i(V^{(i)}_i)$ having non-empty interior, there exists $n\in\mathbb{N}$
			such that $V_0^{(i)}\cap T^{-p_1(n)}V^{(i)}_1\cap\cdots\cap T^{-p_d(n)}V^{(i)}_d\neq\emptyset$.
			\item $\prod_{i=1}^{k}Y_i$ is a $d$-step wTCF for $C$ of $\prod_{i=1}^{k}X_i$.
		\end{enumerate}
	\end{prop}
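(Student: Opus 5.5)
The plan is to reduce Proposition \ref{wTCF} to Proposition \ref{equi of wtcf}, applied to the single product factor map
\[
\pi:=\prod_{i=1}^k\pi_i:\Big(\prod_{i=1}^k X_i,\prod_{i=1}^k T_i\Big)\to\Big(\prod_{i=1}^k Y_i,\prod_{i=1}^k T_i\Big).
\]
Write $X=\prod_i X_i$, $T=\prod_i T_i$, $Y=\prod_i Y_i$. I read condition (1) as: for each $i\in[k]$ the sets $V^{(i)}_0,\dots,V^{(i)}_d$ are open in $X_i$ and $\bigcap_{j=0}^d\pi_i(V^{(i)}_j)$ has non-empty interior in $Y_i$; then one $n$ works simultaneously for all $i$. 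Condition (2) is literally the statement that $Y$ is a $d$-step wTCF for $C$ of $X$. It therefore suffices to show that (1) of the present proposition is equivalent to (1) of Proposition \ref{equi of wtcf} for $\pi$.

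\emph{From the product-rectangle form to general opens.} Suppose (1) here holds, and let $W_0,\dots,W_d\subseteq X$ be open with $\bigcap_j\pi(W_j)$ having non-empty interior. Pick $y=(y_1,\dots,y_k)$ in that interior, and choose $x^{(j)}\in W_j$ with $\pi(x^{(j)})=y$. Shrink each $W_j$ to a basic rectangle $\prod_i V^{(i)}_j\ni x^{(j)}$. The key point is that $\bigcap_j\pi_i(V^{(i)}_j)$ must still have non-empty interior, and this is where care is needed. The sets $\pi_i(V^{(i)}_j)$ all contain $y_i$, but a point in the intersection need not be interior if $\pi_i$ is not open. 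To handle this, I would instead argue with the residual-set construction directly. That is, I would rerun the proof of (1)$\Rightarrow$(2) in Proposition \ref{equi of wtcf} with rectangles. At the point $x=(x_1,\dots,x_k)$ in $\Lambda$, the map $G$ is continuous at $\pi(x)$, and the same argument gives that $\pi(x)$ is interior to $\pi(B_\eta(x))$ and to each $\pi(B_\delta(x^{(j)}))$. Taking $B$'s to be product balls (using the max metric), we get that $\pi(x)$ is interior to a product of sets $\pi_i(B^i)$. This implies $\pi_i(x_i)$ is interior to each $\pi_i(B^i_\eta(x_i))$ and $\pi_i(B^i_\delta(x^{(j)}_i))$. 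Hence hypothesis (1) of the present proposition applies coordinatewise and yields a common $n$ and a point $z=(z_i)$ with $z_i\in B_\eta(x_i)\cap\bigcap_j T_i^{-p_j(n)}B_\delta(x^{(j)}_i)$. From there, the Hausdorff-continuity argument of Proposition \ref{equi of wtcf}, carried out verbatim in $X$, gives $(\pi^{-1}\pi(x))^d\subseteq L^C_x$ for all $x\in\Lambda$, which is (2).

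\emph{(2)$\Rightarrow$(1).} This direction is as in Proposition \ref{equi of wtcf}. Given rectangles with $\bigcap_j\pi_i(V^{(i)}_j)$ having non-empty interior $O_i$, the product $\prod_i O_i$ is open in $Y$. Its preimage under $\pi$ is open, so it meets the dense $G_\delta$ set $\Omega$, say at $x$. Then I choose $x^{(j)}_i\in V^{(i)}_j$ with $\pi_i(x^{(j)}_i)=\pi_i(x_i)$; this is possible since $\pi_i(x_i)\in O_i$. I want $x$ itself inside $\prod_iV^{(i)}_0$. To arrange this, I would take $x$ from $\Omega\cap\prod_i\big(V^{(i)}_0\cap\pi_i^{-1}(O_i)\big)$, which is a non-empty open set because $O_i\subseteq\pi_i(V^{(i)}_0)$. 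Then $(x^{(1)},\dots,x^{(d)})\in L^C_x$, so some $n$ puts $(T^{p_j(n)}x)$ close to $x^{(j)}$. Since the tuple $x^{(j)}$ lies in the open set $\prod_i V^{(i)}_j$, this gives the required non-empty intersections for all $i$ simultaneously.

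The main obstacle is the openness issue in the first direction: ensuring that the interior condition transfers between product and coordinate sets. My approach is to avoid it by working at continuity points of $G$, where interiority comes for free.
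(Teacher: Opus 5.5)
Your proposal is correct and follows essentially the paper's route: the paper gives no separate proof, presenting Proposition \ref{wTCF} as the product version of Proposition \ref{equi of wtcf}, and your rerun of that continuity-point argument is exactly that extension, with max-metric product balls ensuring that interiority of $\pi(x)$ in $\pi(B_\eta(x))=\prod_i\pi_i(B^i_\eta(x_i))$ at a continuity point of $G$ passes to each coordinate. The only caveats are ones you share with the paper: density of $\Lambda=\Lambda_1\cap\pi^{-1}(\Lambda_2)$ tacitly needs $\prod_i\pi_i$ to be semi-open (automatic when the $X_i$ are minimal, the case actually used), and in (2)$\Rightarrow$(1) membership in $L^C_x$ only yields some $n\in\Z$, not $n\in\mathbb{N}$.
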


	We can assume that $\pi$ is open. For the general case, we can use O-diagram to prove the conclusion.  
    
    Now we first prove the open case of Theorem \ref{main thm}.
	
	\begin{lem}\label{keyopen}
		For $i\in [k]$, let $\pi_{i}: (X_i,T_i)\rightarrow (Y_i,T_i)$ be a factor map of minimal systems, and $p_1, p_2, \ldots, p_d$ be distinct non-constant integral polynomials vanishing at $0$.
		For every $i\in [k]$,
		if $\pi$ is open and $R_{\pi_{i}}\subseteq \mathbf{RP}_{X_i}^{[\infty]}$,
		then for any $d\in \mathbb{N}$ and any open subsets $V_{0}^{(i)},V_{1}^{(i)},\ldots, V_d^{(i)}$ of $X_i$ 
		with $\bigcap_{j=0}^d\pi_{i}(V_{j}^{(i)})\neq\emptyset$,
		there exists $n\in\mathbb{N}$ such that
		\[
		V_{0}^{(i)}\cap T_i^{-p_1(n)} V_{1}^{(i)}\cap \cdots \cap T_i^{-p_d(n)} V_d^{(i)}\neq\emptyset,\ \ \ \forall \; i\in [k].
		\]
	\end{lem}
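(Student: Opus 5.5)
Fix $y_i\in\bigcap_{j=0}^d\pi_i(V_j^{(i)})$ for each $i\in[k]$. The strategy is to reduce the statement to the single-system polynomial saturation result (Theorem \ref{polsat}) applied in each coordinate, and to use Lemma \ref{pol rec for open set} together with Lemma \ref{fip*} to synchronize the return times across coordinates. The simultaneity must come from the filter property of $\mathcal{F}_{fip}^*$, and the saturation in each coordinate should hold along a set in $\mathcal{F}_{fip}$ rather than at a single $n$.

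\textbf{Step 1 (reduction to a generic point in each coordinate).} Since $\pi_i$ is open, the set $W_i=\bigcap_j\pi_i(V_j^{(i)})$ is a nonempty open subset of $Y_i$. Hence $U_i=V_0^{(i)}\cap\pi_i^{-1}(W_i)$ is a nonempty open set. Because $R_{\pi_i}\subseteq\mathbf{RP}^{[\infty]}_{X_i}$, the map $\pi_i$ factors through $X_{i,\infty}$. Theorem \ref{polsat} (whose proof passes through the open diagram of Theorem \ref{hsy}) then gives a dense $G_\delta$ set $\Omega_i$, and we may pick $x_i\in\Omega_i\cap U_i$. For each $j$ choose $x_{i,j}\in V_j^{(i)}$ with $\pi_i(x_{i,j})=\pi_i(x_i)$; this is possible by openness, after shrinking $U_i$ if necessary. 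These points are $\mathbf{RP}^{[\infty]}$-related to $x_i$, so $(x_{i,1},\dots,x_{i,d})\in L^C_{x_i}$. Consequently, for every neighbourhood $O_i\ni x_i$ inside $V_0^{(i)}$, the set
\[
N_i=\{n:\ O_i\cap T_i^{-p_1(n)}V_1^{(i)}\cap\cdots\cap T_i^{-p_d(n)}V_d^{(i)}\neq\emptyset\}
\]
is nonempty.

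\textbf{Step 2 (the key upgrade).} The goal is to show that one coordinate, say $i=1$, has $N_1\in\mathcal{F}_{fip}$. The plan is to redo the single-system argument of Huang--Shao--Ye with $\mathcal{F}_{fip}$ tracked throughout. Their proof establishes the hitting times via nilsystem and IP-type (van der Waerden/PET induction) arguments, and the fibres over the $\infty$-step pro-nilfactor are where IP-recurrence lives. So I expect the set of good $n$ to contain finite IP-sets of arbitrary length.

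For the remaining coordinates $i\ge2$, I first use minimality of the relevant product orbit closure to replace each $V_j^{(i)}$ by an open set such that the pattern is realized at $n$ ranging over a set in $\mathcal{F}_{fip}^*$. Concretely, apply Lemma \ref{pol rec for open set} to a small neighbourhood $U$ of the point $(x_i,T_i^{p_1(m)}x_i,\dots)$ realizing the pattern at some fixed time $m$, inside the system $X_i$ or its product with the polynomials shifted. The polynomials $q_j(n)=p_j(n+m)-p_j(m)$ still vanish at $0$ and remain distinct, which gives a return-time set in $\mathcal{F}_{fip}^*$.

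\textbf{Step 3 (synchronization).} Intersect the $\mathcal{F}_{fip}$-set from coordinate $1$ with the $\mathcal{F}_{fip}^*$-sets from coordinates $2,\dots,k$, and use Lemma \ref{fip*} to obtain a common $n$. This common $n$ is then the required time in the statement for every $i\in[k]$ simultaneously.

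\textbf{Main obstacle.} The hard point is making Step 2 and the shift trick consistent. The natural approach is to run an induction on $k$: apply the $(k-1)$-case to get a set of good times, and then show this set is in $\mathcal{F}_{fip}$ by building IP-sets through the $\mathbf{RP}^{[\infty]}$ cube structure, i.e. the dynamical-parallelepiped characterization of $\mathbf{RP}^{[d]}$ with $d$ large. If that fails, I would instead mimic the proof of Theorem QXYY, replacing its use of Theorem GHSWY with Theorem \ref{polsat}.
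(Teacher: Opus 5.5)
Your outline has the right architecture: one coordinate supplies a set in $\mathcal{F}_{fip}$, every other coordinate supplies a set in $\mathcal{F}_{fip}^*$, and Lemma~\ref{fip*} yields a common time. However, both load-bearing steps are missing.

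\textbf{The $\mathcal{F}_{fip}$ upgrade.} Step~2 is only an expectation. Nothing guarantees that re-running Huang--Shao--Ye ``with $\mathcal{F}_{fip}$ tracked'' produces finite IP-sets of good times. The paper obtains this upgrade (Lemma~\ref{key}) from Theorem~\ref{polsat} used as a black box, roughly as follows.
Take $b_{j+1}\ge N(b_1+\cdots+b_j)+1$ with $N$ large relative to the leading coefficients, and let $I_j$ be the finite IP-set generated by $b_1,\dots,b_j$.
One checks that the $d(2^j-1)$ polynomials $n\mapsto p_i(mn)$, $i\in[d]$, $m\in I_j$, are pairwise distinct; lacunarity keeps the ratios $(m'/m)^{\deg}$ away from the ratios of leading coefficients.
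Let $\Omega_j$ be the residual set that Theorem~\ref{polsat} gives for this enlarged family. Take $z\in V_0\cap\bigcap_j\Omega_j$ with $\pi(z)\in\bigcap_{i=0}^d\pi(V_i)$, and fibre points $z_i\in V_i$ with $\pi(z_i)=\pi(z)$.
Saturation at the tuple $(z_1,\dots,z_d,z_1,\dots,z_d,\dots)$ gives $n_j$ with $T^{p_i(mn_j)}z\in V_i$ for all $i$ and all $m\in I_j$. So the IP-set generated by $b_1n_j,\dots,b_jn_j$ consists of good times, and the union over $j$ lies in $\mathcal{F}_{fip}$.

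\textbf{Synchronization.} Steps~2 and~3 cannot be synchronized as you set them up. Lemma~\ref{pol rec for open set} gives $\mathcal{F}_{fip}^*$ only for diagonal patterns $U\cap T^{-p_1(n)}U\cap\cdots\cap T^{-p_d(n)}U$, not for the good-time set of a non-diagonal pattern. Your repair with a coordinate-dependent shift $m_i$ only produces sets $m_i+F_i$ with $F_i\in\mathcal{F}_{fip}^*$. Translates of $\mathcal{F}_{fip}^*$-sets need not lie in $\mathcal{F}_{fip}^*$ and can even be disjoint ($2\Z$ and $1+2\Z$). Moreover, coordinate~$1$'s $\mathcal{F}_{fip}$-set refers to the unshifted polynomials, so Lemma~\ref{fip*} gives nothing.

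The missing device is an induction over the coordinates with one \emph{common} shift, carrying a factor-level recurrence condition for the coordinates not yet treated.
Let $W_l=\bigcap_j\pi_l(V_j^{(l)})$, which is open precisely because $\pi_l$ is open. One builds $n_{i-1}$ such that the pattern holds in every coordinate $l<i$, and $W_l\cap T_l^{-p_1(n_{i-1})}W_l\cap\cdots\cap T_l^{-p_d(n_{i-1})}W_l\neq\emptyset$ for $l\ge i$.
Set $p_j^*(n)=p_j(n+n_{i-1})-p_j(n_{i-1})$. The $W_i$-condition gives the fibre hypothesis of Lemma~\ref{key} for $V_0^{(i)},T_i^{-p_1(n_{i-1})}V_1^{(i)},\dots,T_i^{-p_d(n_{i-1})}V_d^{(i)}$ and $p_1^*,\dots,p_d^*$, hence some $A_i\in\mathcal{F}_{fip}$.
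Lemma~\ref{pol rec for open set}, for the same $p_j^*$, gives sets in $\mathcal{F}_{fip}^*$ when applied to the nonempty open sets $U^{(l)}=V_0^{(l)}\cap\bigcap_jT_l^{-p_j(n_{i-1})}V_j^{(l)}$ in $X_l$ ($l<i$) and $W^{(l)}=W_l\cap\bigcap_jT_l^{-p_j(n_{i-1})}W_l$ in $Y_l$ ($l>i$).
Any $m\in A_i$ lying in all these sets (Lemma~\ref{fip*}) makes $n_i=n_{i-1}+m$ satisfy both conditions. It suffices to invoke Lemma~\ref{key} in the single minimal system $X_i$. The paper phrases this step through $\prod_{j\le i}X_j$, which need not be minimal, but the coordinates $l<i$ are already covered by the $U^{(l)}$.

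Your fallback ``induction on $k$'' has the same defect. Even if the simultaneous good times for $k-1$ coordinates formed a set in $\mathcal{F}_{fip}$, the $k$-th coordinate's good-time set is not in $\mathcal{F}_{fip}^*$. It is the $W_l$-condition together with the common shift that lets each coordinate in turn be the one supplying the $\mathcal{F}_{fip}$-set.
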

	To prove Lemma \ref{keyopen}, we need the following lemma.
	\begin{lem}\label{key}
		Let $\pi:(X,T)\rightarrow (X_{\infty},T)$ be a factor map of minimal systems, $k\in \N$ and  let $p_1, p_2, \ldots, p_k$ be distinct non-constant integral polynomials vanishing at $0$. Then there is a dense $G_\delta$ set $\Omega$ of $X$ such that for any open subsets $V_0, V_1,\cdots, V_k$ with
		$\bigcap_{j=0}^k\pi(V_{j})\neq\emptyset$ and any $z\in V_0\cap \Omega$ with $\pi(z)\in \bigcap_{j=0}^k\pi(V_{j})$, there exists some $A\in \mathcal{F}_{fip}$ such that $T^{p_i(n)}z\in V_i$ for any $i\in[k]$ and $n\in A$.
	\end{lem}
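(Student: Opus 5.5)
The plan is to produce, for each length $L$, a finite IP-set of length $L$ inside the return set
\[
N(z):=\{n\in\mathbb{Z}: T^{p_i(n)}z\in V_i,\ i\in[k]\}.
\]
Taking the union of these IP-sets over all $L$ then gives a set $A\subseteq N(z)$ with $A\in\mathcal{F}_{fip}$. Each IP-set will come from one application of Theorem \ref{polsat} to an enlarged family of polynomials, built from the $p_i$ by dilations.

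\emph{Step 1: the enlarged family.} Fix $L\in\mathbb{N}$. I will choose nonzero integers $N_1^{(L)},\dots,N_L^{(L)}$ and let $S_L$ be the set of nonzero subset sums $s=\sum_j\epsilon_jN_j^{(L)}$. Consider the family
\[
C_L:=\{\,p_i(sn): i\in[k],\ s\in S_L\,\}
\]
of polynomials in $n$. They are non-constant and vanish at $0$. I need them to be pairwise distinct in the paper's sense, i.e.\ no difference $p_i(sn)-p_j(s'n)$ is constant for $(i,s)\neq(j,s')$. Write $a_i$ for the leading coefficient and $e_i$ for the degree of $p_i$.
\begin{itemize}
\item If $i=j$ and $s\neq s'$, distinctness holds once the sums in $S_L$ are pairwise distinct and not negatives of each other when $e_i$ is even. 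Taking $N_j^{(L)}$ positive and rapidly increasing (e.g.\ $N_j^{(L)}=2^j$) gives this.
\item If $s=s'$ and $i\neq j$, then $p_i-p_j$ is non-constant, hence so is $p_i(sn)-p_j(sn)$.
\item If $i\neq j$ and $s\neq s'$, a coincidence forces $e_i=e_j=e$ and $a_is^{e}=a_js'^{e}$, or an identity between lower-order coefficients. Only finitely many ratios $s/s'$ are excluded, so a generic choice such as $N_j^{(L)}=Q^{j}$ with $Q$ a large prime avoids all of them.
\end{itemize}
I expect this bookkeeping to be the main technical nuisance, though it is elementary.

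\emph{Step 2: the residual set.} For each $L$, Theorem \ref{polsat} applied to $C_L$ gives a dense $G_\delta$ set $\Omega_L$. Put $\Omega=\bigcap_L\Omega_L$, which is again a dense $G_\delta$ set. Now take $z\in V_0\cap\Omega$ with $\pi(z)\in\bigcap_j\pi(V_j)$. For each $i\in[k]$ pick $w_i\in V_i$ with $\pi(w_i)=\pi(z)$. The point whose $(i,s)$-coordinate is $w_i$ lies in $(\pi^{(|C_L|)})^{-1}\pi^{(|C_L|)}(z^{(|C_L|)})$, so it lies in $L_z^{C_L}$. Hence there is $n\in\mathbb{Z}$ with $T^{p_i(sn)}z\in V_i$ for all $i\in[k]$ and $s\in S_L$. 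In other words, the finite IP-set of length $L$ generated by $N_1^{(L)}n,\dots,N_L^{(L)}n$ is contained in $N(z)$.

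\emph{Step 3: the degenerate case.} The IP-set just found degenerates if $n=0$, which can happen only when $z\in\bigcap_i V_i$ and the approximation to the tuple comes from $n=0$ alone. I would handle this by applying Step 2 to a smaller neighbourhood, replacing each $w_i$ by a fiber point $w_i'\ne z$ if the fiber is nontrivial. If the fiber is trivial, I would instead use that $L_z^{C_L}$ contains the diagonal point $z^{(|C_L|)}$ as a limit along $n\ne0$; this follows from Lemma \ref{pol rec for open set}, whose return set is nonempty and avoids any finite set because it lies in $\mathcal{F}_{fip}^*$. The same device lets me take $|n|$ as large as needed, so the IP-sets for different $L$ can be chosen with distinct generators. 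Their union $A$ then contains finite IP-sets of every length, so $A\in\mathcal{F}_{fip}$, and $T^{p_i(n)}z\in V_i$ for all $n\in A$ and $i\in[k]$.
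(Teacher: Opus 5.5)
Steps~1 and~2 follow the paper's proof. You dilate each $p_i$ by the elements of a finite IP-set, check that $\{p_i(sn)\}$ is still a family of distinct polynomials, apply Theorem~\ref{polsat} to the fiber tuple, and take the union over all lengths. The only difference is how you separate the ratios $s/s'$: you read base-$Q$ digits, while the paper uses the lacunary condition $b_{j+1}\ge N(b_1+\cdots+b_j)+1$. Either works. Step~3 deals with something the paper skips: it asserts $n_j\in\mathbb{N}$, but $L_z^{C_j}$ is a closure over $n\in\mathbb{Z}$, so $n_j=0$ is not ruled out. Your argument for Step~3, however, has a gap.

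\textbf{First branch.} A nontrivial fiber need not meet any $V_i$ away from $z$, since the $V_i$ may be small neighbourhoods of $z$. So this branch does not help.

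\textbf{Second branch.} Lemma~\ref{pol rec for open set} gives multiple recurrence for \emph{open sets}, not for the particular point $z$. Nothing in the choice $\Omega=\bigcap_L\Omega_L$ guarantees that $z^{(|C_L|)}$ is a limit of $(T^{q(n)}z)_{q\in C_L}$ along $n\neq 0$.

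\textbf{The fix.} Shrink $\Omega$. For $L,m\in\mathbb{N}$ let $G_{L,m}=\{y\in X:\exists\, n\neq 0,\ \rho(T^{q(n)}y,y)<1/m\ \forall q\in C_L\}$.
\begin{itemize}
\item $G_{L,m}$ is open, being a union over $n\neq0$ of open sets.
\item $G_{L,m}$ is dense. Given a nonempty open $U$, take a nonempty open $U'\subseteq U$ of diameter $<1/m$. The return set from Lemma~\ref{pol rec for open set} for $U'$ and $C_L$ lies in $\mathcal{F}_{fip}^*$, so it meets $\mathbb{N}\in\mathcal{F}_{fip}$. This gives some $y\in U'$ and $n\ge1$ with $T^{q(n)}y\in U'$ for all $q\in C_L$, so $y\in G_{L,m}\cap U$.
\end{itemize}
Replace $\Omega$ by $\Omega\cap\bigcap_{L,m}G_{L,m}$, which is still a dense $G_\delta$ set independent of the $V_j$. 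No case split is then needed. If $z\notin\bigcap_iV_i$, the $n$ from Step~2 is automatically nonzero. If $z\in\bigcap_iV_i$, the product of the $V_i$ is a neighbourhood of $z^{(|C_L|)}$, and the recurrence of $z$ gives some $n\neq0$.

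Finally, choosing distinct generators for different $L$ is unnecessary. Membership in $\mathcal{F}_{fip}$ only requires $A$ to contain a non-degenerate IP-set of each length, and $n\neq 0$ already makes $N_1n,\dots,N_Ln$ distinct and nonzero.
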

	\begin{proof}
	Suppose $p_1, p_2, \ldots, p_k$ are distinct non-constant integral polynomials vanishing at $0$. 
	Let $d=\max_{1\leq i\leq k}\{\deg p_i\}$ and let $a_i$ be the coefficient of polynomial $p_i,\forall i\in[k]$. We may suppose $a_i>0$ for any $i\in [k]$. Choose $N\geq\max_{i\in [k]}\{a_i\}$ sufficiently large such that $$(1+1/N)^{d}<\min\{a_i/a_j:a_j<a_i,i,j\in [k]\}:=\omega.$$ Let $\{b_j\}_{j\in\N}$ be a sequence of positive integers such that $b_{j+1}\geq N\cdot(b_1+b_2\cdots+ b_j)+1$, and let $I_j$ be the finite IP-set generated by $\{b_1,\cdots,b_j\}$.
		\begin{claim}
			Let $p_{i,m}(n):=p_i(mn),\forall n\in\Z$. For $i,i'\in[k]$ and $m,m'\in I_j$, $p_{i,m}=p_{i',m'}$ if and only if $i=i'$ and $m=m'$.
		\end{claim}
		\begin{proof}
			Suppose $\deg p_i =\deg p_{i,m}:=d_i$, then the leading coefficient of $p_{i,m}$ is $a_i m^{d_i}$. We suppose for a contradiction that there exist $i,i'\in[k]$ with $i<i'$ and  $m,m'\in I_j$ such that $p_{i,m}=p_{i',m'}$. 
			
			Let $\vec{\ep}=(\epsilon_1,\cdots,\epsilon_j),\vec{\ep'}=(\epsilon_1',\cdots,\epsilon_j')\in \{0,1\}\backslash\{\vec{0}\}$ such that $m=\sum_{i=1}^j b_i\epsilon_i,m'=\sum_{i=1}^j b_i\epsilon'_i$. 
			When $p_{i,m}=p_{i',m'}$, they have the same degree and coefficient. i.e., we have $\deg p_{i,m}=\deg p_{i',m'}:=d^*$ and  $a_im^{d^*}=a_{i'}(m')^{d^*}$. 
			Let  $$t=\max\{t\in[j]:\epsilon_n=1\}\text{ and }t'=\max\{t\in[j]:\epsilon_n=1\}.$$ It is clear that $t\leq t'$ whenever $m<m'$.
			
			If $t=t'$, then  $$\frac{a_i}{a_{i'}}=(\frac{m'}{m})^{d^*}<(1+\frac{b_{t-1}+\cdots+b_1}{b_{t}})^{d^*}<(1+\frac{1}{N})^{d^*}<\omega,$$ we can get that $(\frac{m'}{m})^{d^*}<\frac{a_i}{a_{i'}}$, which is a contradiction by the definition of $\omega$.
			
			If $t<t'$, then $$\frac{a_i}{a_{i'}}=(\frac{m'}{m})^{d^*}>\dfrac{b_{t'>t}}{b_1+\dots+b_{t}}>N^{d^*}>\frac{a_i}{a_{i'}},$$ a contradiction. This shows the claim.
		\end{proof}
		
		Let  $C_j=\{p_{i,m}(n):i\in[k], m\in I_j\}$. By the claim above, we have that $C_j$ is a family of distinct integral polynomials vanishing at 0. Then by Theorem \ref{polsat}, there exists a dense $G_\delta$ set $\Omega_j$ such that for any 
		$x\in \Omega_j$,
		$$(\pi^{(k(2^j-1))})^{-1}\pi^{(k(2^j-1))}(x^{(k(2^j-1))})\subset L_x^{C_j}=:\overline{\{(T^{p_{i,m}(n)}x)_{m\in I_j,i\in[k]}: n\in\Z\}}.$$
			
		Let $\Omega=\cap_{j\in\N}\Omega_j$, it is also a dense $G_\delta$ set. Let $V_0,V_1,\cdots, V_k$ be open subsets of $X$ with $W:= \bigcap_{j=0}^k\pi V_{j}\ne\varnothing$, then $\pi^{-1}(W)\cap V_i\neq \emptyset$ for every $i\in[k]$. Let $z\in \Omega\cap V_0\cap\pi^{-1}(W)$. For $i\in[k]$, let $z_i\in \pi^{-1}(\{\pi(z)\})\cap V_i$ and choose $\delta>0$ such that $B(z_i,\delta)\subset V_i$.
			
		For every $j\in\N$. Let $$\overrightarrow{Z_j}=(z_1,z_2,\cdots,z_k,z_1,z_2,\cdots,z_k,\cdots,z_1,z_2,\cdots,z_k)\in X^{k(2^j-1)}.$$ 
		It is clear $\overrightarrow{Z_j}\in L_z^{C_j}$, which implies that there is $n_j\in\N$ such that $$\rho(T^{p_{i,m}(n_j)}z,z_i)<\delta, \forall m\in I_j,i\in[k].$$ 
		Let $A_j=\{mn_j:m\in I_j\}$ and $A=\bigcup_{j\in \N}A_j$, then we have $A\in \mathcal{F}_{fip}$ and $T^{p_i(n)}z\in V_i$ for every $i\in[k]$ and $n\in A$. 
		
	\end{proof}
	Then we are ready to prove the key lemma.
	\begin{proof}[Proof of Lemma \ref{keyopen}]
		Let $d\in\mathbb{N}$ and assume that  $V_{0}^{(i)},V_{1}^{(i)},\ldots, V_d^{(i)}$ are open subsets of $X_i$
		with $W_i:=\bigcap_{j=0}^d\pi_{i}(V_{j}^{(i)})\neq\emptyset$ for every $i\in [k]$.
		We next construct inductively  $n_1,\ldots,n_k\in\mathbb{N}$ such that for any $i\in[k]$,
		\begin{itemize}
			\item $V_0^{(l)}\cap T_l^{-p_1(n_i)}V_1^{(l)}\cap\cdots\cap T_l^{-p_d(n_i)}V_d^{(l)}\neq\emptyset$ for $l\in\{1,\ldots,i\}$,
			\item $W_l\cap T_l^{-p_1(n_i)}W_l\cap\cdots\cap T_l^{-p_d(n_i)}W_l\neq\emptyset$ for $l\in\{i+1,\ldots,k\}$.
		\end{itemize}
		Assume this has been achieved, then
		\[
		V_0^{(i)}\cap T_i^{-p_1(n_k)}V_1^{(i)}\cap\cdots\cap T_i^{-p_d(n_k)}V_d^{(i)}\neq\emptyset, \ \ \ \forall \; i\in [k],
		\]
		as was to be shown.

		\medskip
		
		{\bf We now return to the inductive construction of $n_1,\ldots,n_k$.}
		
		\medskip

		\noindent {\bf Step $1$:}
		By Lemma \ref{key}, there exists $x_1\in V_{0}^{(1)}$ and $A_1\in \mathcal{F}_{fip}$ such that for each $n\in A_1$,
		\[
		(T_1^{p_1(n)}x_1, \ldots, T_1^{p_d(n)}x_1)\in V_{1}^{(1)}\times \cdots\times  V_d^{(1)}.
		\]
		Applying Lemma \ref{pol rec for open set} to systems $(Y_2, T),\ldots,(Y_{k},T)$ with open sets $W_2,\ldots,W_{k}$ respectively, we have
		\[
		\{ n\in \mathbb{Z}: W_l\cap T_l^{-p_1(n)}W_l\cap\cdots\cap T_l^{-p_d(n)}W_l\neq\emptyset\}\in\mathcal{F}_{fip}^{*},
		\]
		for each $l\in\{2,\ldots,k\}$. Thus, by Lemma \ref{fip*} there is $n_1\in A_1$ such that
		\begin{itemize}
			\item $V_0^{(1)}\cap T_1^{-p_1(n_1)}V_1^{(1)}\cap\cdots\cap T_1^{-p_d(n_1)}V_d^{(1)}\neq\emptyset$,
			\item $W_l\cap T_l^{-p_1(n_1)}W_l\cap\cdots\cap T_l^{-p_d(n_1)}W_l\neq\emptyset$ for $l\in\{2,\ldots,k\}$.
		\end{itemize}

		\medskip
		
		\noindent {\bf Step $i$:}
		Let $i\in \{2,\ldots,k\}$ and assume that we have already chosen $n_{i-1}\in\mathbb{N}$ such that
		\begin{itemize}
			\item $V_0^{(l)}\cap T_l^{-p_1(n_{i-1})}V_1^{(l)}\cap\cdots\cap T_l^{-p_d(n_{i-1})}V_d^{(l)}\neq\emptyset$ for $l\in\{1,\ldots,i-1\}$,
			\item $W_l\cap T_l^{-p_1(n_{i-1})}W_l\cap\cdots\cap T_l^{-p_d(n_{i-1})}W_l\neq\emptyset$ for $l\in\{i,\ldots,k\}$.
		\end{itemize}

		Note that
        $$\pi_j(V_{0}^{(j)})\cap \pi_j(T_j^{-p_1(n_{i-1})}V_1^{(j)})\cap \cdots\cap \pi_j(T_j^{-p_d(n_{i-1})}V_d^{(j)})\neq\emptyset, \forall j\in[i-1]$$
        and
		\begin{align*}
			 \pi_i(V_{0}^{(i)})\cap &\pi_i(T_i^{-p_1(n_{i-1})}V_1^{(i)})\cap \cdots\cap \pi_i(T_i^{-p_d(n_{i-1})}V_d^{(i)})  \\
			&\supseteq\;   W_i\cap T_i^{-p_1(n_{i-1})}W_i\cap\cdots\cap T_i^{-p_d(n_{i-1})}W_i\neq\emptyset.
		\end{align*}

        Now let $p_j^*$ be polynomials defined by $p_j^*(n)=p_j(n+n_{i-1})-p_j(n_{i-1})$ with $j\in[d]$. Since $p_1, p_2, \ldots, p_d$ are distinct the set $C^*=\{p_1^*,p_2^*,\cdots,p_d^*\}$ is also a family of distinct polynomials vanishing at 0. Since $\RP^{[\infty]}(X_j)$ is closed and $T_j\times T_j$-invariant, it is clear that $\prod_{j=1}^i X_{j,\infty}$ is the $\infty$-step pro-nilfactor of $\prod_{j=1}^i X_{j}$ where $X_{j,\infty}$ is the $\infty$-step pro-nilfactor of $X_{j}$. 
		By applying Lemma \ref{key} to the product system $(\prod_{j=1}^i X_j,\prod_{j=1}^{i}T_j)$ and open sets $\prod_{j=1}^{i} T_j^{-p_1(n_1)}V_1^{(j)},\dots,\prod_{j=1}^{i} T_j^{-p_1(n_1)}V_d^{(j)}$, there exists $x_j\in V_{0}^{(j)}$ and $A_i\in \mathcal{F}_{fip}$
		such that for each $n\in A_i$,
		\[ 
		(T_j^{p^*_1(n)}x_j, \ldots, T_j^{p^*_d(n)}x_j)\in T_j^{-p_1(n_{i-1})}V_1^{(j)}\times\cdots\times T_j^{-p_d(n_{i-1})}V_d^{(j)},\forall j\in[i],i\in[k].
		\]

		Now set
		$$U^{(l)}  =V_0^{(l)}\cap T_l^{-p_1(n_{i-1})}V_1^{(l)}\cap \cdots\cap T_l^{-p_d(n_{i-1})}V_d^{(l)}, \forall l\in\{1,\ldots,i-1\}, $$
		$$W^{(l)} =W_l\cap T_l^{-p_1(n_{i-1})}W_l\cap\cdots\cap T_l^{-p_d(n_{i-1})}W_l, \forall l\in\{i+1,\ldots,k\}.$$
		Applying Lemma \ref{pol rec for open set} to systems $(X_1, T),\ldots,(X_{i-1},T)$ and $(Y_{i+1}, T), \ldots,(Y_{k},T)$ for polynomials $p_1^*,\dots,p_d^*$ with open sets
		$U^{(1)},\ldots,U^{(i-1)}$ and $W^{(i+1)},\ldots,W^{(k)}$ respectively,
		by Lemma \ref{fip*} again there is some $m\in A_i$ such that
		\begin{itemize}
			\item $U^{(l)}\cap T_l^{-p^*_1(m)}U^{(l)}\cap\cdots\cap T_l^{-p^*_d(m)}U^{(l)}\neq\emptyset$ for $l\in\{1,\ldots,i-1\}$,
			\item $W^{(l)}\cap T_l^{-p^*_1(m)}W^{(l)}\cap\cdots\cap T_l^{-p^*_d(m)}W^{(l)}\neq\emptyset$ for $l\in\{i+1,\ldots,k\}$.
		\end{itemize}
		
		Set $n_i=n_{i-1}+m$.
		We get that
        $$
        \begin{aligned}
        x_j\in V_0^{(l)}&\cap T_j^{-p_1(n_{i-1})-p_1^*(m)}V_1^{(l)}\cap\dots \cap T_j^{-p_d(n_{i-1})-p_d^*(m)}V_d^{(l)}\\
        &=V_0^{(l)} \cap T_j^{-p_1(n_{i})}V_1^{(l)}\cap\dots \cap T_j^{-p_d(n_{i})}V_d^{(l)}\ne\emptyset, \forall j\in[i].
        \end{aligned}$$
        
        Therefore,
		\begin{itemize}
			\item $ V_0^{(l)}\cap T_l^{-p_1(n_i)}V_1^{(l)}\cap \cdots\cap T_l^{-p_d(n_i)}V_d^{(l)}\neq\emptyset$ for $l\in\{1,\ldots,i\}$,
			\item $W_l\cap T_l^{-p_1(n_i)}W_l\cap\cdots\cap T_l^{-p_d(n_i)}W_l\neq\emptyset$ for $l\in\{i+1,\ldots,k\}$.
		\end{itemize}
		
        We finish the construction by induction and hence ends the proof.
	\end{proof}

	Now we prove the main theorem.
	
	\begin{proof}[Proof of Theorem \ref{main thm}]
		By Theorem \ref{hsy} for every $i\in [k]$ there exist minimal systems $X_i^*$ and $X_{i,\infty}^*$ which are almost one to one
		extensions of $X_i$ and $X_{i,\infty}$ respectively, and a commuting diagram below such that $\pi_i^*$ is open and
		$X_{i,\infty}^*$ is a
		$\infty$-step TCF of $X_i^*$.
		\[
		\xymatrix{
			X_i \ar[d]_{\pi_i}  & X_i^* \ar[d]^{\pi_i^*}  \ar[l]_{\sigma_i} \\
			X_{i,\infty}   & X_{i,\infty}^*      \ar[l]_{\tau_i}
		}
		\]

	As each $\sigma_i$ is almost one-to-one, we can deduce that $X_{i,\infty}$ is the maximal $\infty$-step pro-nilfactor of $X_i^*$. For open subsets $V_{0}^{(i)},V_{1}^{(i)},\ldots, V_d^{(i)}$ of $X_i$ 
	with $\bigcap_{j=0}^d\pi_{i}(V_{j}^{(i)})\neq\emptyset$, we consider sets $U_j^{(i)}=\sigma_i^{-1}(V_j^{(i)})$ in $X_i^*$, by Lemma \ref{keyopen} we have
	\[
	\prod_{i=1}^{k}U_0^{(i)}\cap \widetilde{T}^{-p_1(n)}\prod_{i=1}^{k}U_1^{(i)}\cap \cdots\cap 
	\widetilde{T}^{-p_d(n)}\prod_{i=1}^{k}U_d^{(i)}\neq\emptyset,
	\]
	where $\widetilde{T}=T_1\times\cdots\times T_k$. Hence, 
	\[
	\prod_{i=1}^{k}V_0^{(i)}\cap \widetilde{T}^{-p_1(n)}\prod_{i=1}^{k}U_1^{(i)}\cap \cdots\cap 
	\widetilde{T}^{-p_d(n)}\prod_{i=1}^{k}V_d^{(i)}\neq\emptyset.
	\]
	i.e.,
	\[
	V_{0}^{(i)}\cap T_i^{-p_1(n)} V_{1}^{(i)}\cap \cdots \cap T_i^{-p_d(n)} V_d^{(i)}\neq\emptyset,\ \ \ \forall \; 1\leq i\leq k.
	\]
	\end{proof}

\begin{proof}[Proof of Theorem \ref{main thm'}]
	It follows from Theorem \ref{main thm} and Proposition \ref{wTCF} that $(\prod_{i=1}^{k}X_{i,\infty},T)$ is a $d$-step wTCF for $C$ of $(\prod_{i=1}^{k}X_{i},\prod_{i=1}^{k}T_i)$ for any $d\in\mathbb{N}$.
\end{proof}

Moreover, we have the following theorem.
  
\begin{thm}\label{generalization of main1}
Let  $C=\{p_1,p_2,\cdots,p_d\}$ be a family of distinct polynomials vanishing at $0$ and $\pi_{i,l_i}: (X_i,T_i)\rightarrow (X_{i,l_i},T_i)$ be factor maps of minimal systems for every $1\leq i\leq k$ where $X_{i,l_i}$ is the maximal $l$-step pro-nilfactor of $X_i$. If $(X_{i,l_i},T_i)$ be the $d$-step wTCF for $C$ of $(X_i,T_i)$, then $(\prod_{i=1}^{k}X_{i,l_i},\prod_{i=1}^{k}T_i)$ is a $d$-step wTCF for $C$ of $(\prod_{i=1}^{k}X_{i},\prod_{i=1}^{k}T_i)$.
\end{thm}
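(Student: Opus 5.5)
The plan is to reproduce the scheme of Section 3, with the maximal $l_i$-step pro-nilfactor in place of the $\infty$-step one. By Proposition \ref{wTCF}, it is enough to prove the product recurrence statement (1). So fix open sets $V^{(i)}_0,\dots,V^{(i)}_d\subseteq X_i$ with $W_i:=\bigcap_{j=0}^d\pi_{i,l_i}(V^{(i)}_j)$ having non-empty interior for each $i\in[k]$. We must find a single $n$ with $V_0^{(i)}\cap T_i^{-p_1(n)}V_1^{(i)}\cap\cdots\cap T_i^{-p_d(n)}V_d^{(i)}\neq\emptyset$ for all $i$. The argument is the inductive construction of $n_1,\dots,n_k$ from the proof of Lemma \ref{keyopen}. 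At stage $i$, the first $i$ coordinates are already "good" for $V$'s, and the remaining coordinates are good for the $W_l$'s in the factor.

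Two ingredients replace Lemma \ref{key} and Lemma \ref{pol rec for open set}. The second one carries over verbatim: Lemma \ref{pol rec for open set} applies to any minimal system, in particular to $X_{l,l_l}$ and to $X_l$. For the first, I would prove an analogue of Lemma \ref{key} for the factor $\pi_{i,l_i}$. The hypothesis says $X_{i,l_i}$ is a $d$-step wTCF for $C$, but I need it for the dilated families $C_j=\{p_{s}(mn): s\in[d], m\in I_j\}$, which have more than $d$ members. So the step I would actually aim for is this: from the hypothesis (stated for $C$ and for the shifted families $p^*$ as well), extract for a residual set of $z$ a set $A\in\mathcal F_{fip}$ of return times $n$ with $T^{p_s(n)}z\in V_s$. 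I would do this by combining the wTCF hypothesis with the $\mathcal{F}_{fip}^*$-recurrence of Lemma \ref{pol rec for open set}. Concretely, the set of good $n$ for a generic $z$ should be shown to lie in $\mathcal F_{fip}$ by a Ramsey/dual-filter argument: its complement cannot be in $\mathcal F_{fip}^*$, since intersecting with the $\mathcal F_{fip}^*$ set of returns near $z$ would contradict saturation. If this fails, the fallback is a weaker plan. Note that maximal $l$-step pro-nilfactors are distal, and distal minimal systems have open factor maps onto their pro-nilfactors up to O-diagrams. This lets me reduce to the open case via Theorem \ref{veech}, as in the proof of Theorem \ref{main thm}.

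Given these, the induction runs as before. At step $i$, replace $p_s$ by $p^*_s(n)=p_s(n+n_{i-1})-p_s(n_{i-1})$, which is again a family of distinct polynomials vanishing at $0$. Then get $A_i\in\mathcal F_{fip}$ from the analogue of Lemma \ref{key} applied to the product of the first $i$ systems; one needs the product of the $X_{j,l_j}$ to behave as the relevant factor there. Finally, intersect $A_i$ with finitely many $\mathcal{F}_{fip}^*$ sets via Lemma \ref{fip*} to choose $m$, and set $n_i=n_{i-1}+m$.

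The main obstacle is the first ingredient. The hypothesis gives saturation only for the $d$ polynomials in $C$ and one coordinate system at a time. The proof of Lemma \ref{key}, however, needs saturation for the enlarged families $C_j$ and jointly over the product of the first $i$ systems, whereas Theorem \ref{polsat} supplied this for free in the $\infty$ case. I would first try to show that the wTCF hypothesis for $C$ passes automatically to all $C_j$ and to finite products. If it does not, the theorem should be read with the hypothesis quantified over all such families, and I would say so explicitly.
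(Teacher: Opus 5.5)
You correctly locate the obstacle, but the proposal does not get past it. So there is a genuine gap at your analogue of Lemma \ref{key}.

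\textbf{Why the dual-filter argument fails.} For a fixed generic $z$, the wTCF hypothesis only tells you that $G_z=\{n: T^{p_s(n)}z\in V_s \ \forall s\in[d]\}$ is non-empty. Lemma \ref{pol rec for open set}, by contrast, gives an $\mathcal{F}_{fip}^*$ set of times at which \emph{some} point of $U$ returns to $U$, not the point $z$. If $\Z\setminus G_z\in\mathcal{F}_{fip}^*$, intersecting it with that set just yields another non-empty $\mathcal{F}_{fip}^*$ set, and nothing contradicts saturation.

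Showing $G_z\in\mathcal{F}_{fip}=(\mathcal{F}_{fip}^*)^*$ means showing that $G_z$ meets every $\mathcal{F}_{fip}^*$ set. Lemma \ref{key} gets this from saturation for the dilated families $C_j$, which Theorem \ref{polsat} supplies in the $\infty$-step case. A wTCF hypothesis for the single $d$-element family $C$ says nothing about the $C_j$, nor about the shifted families $p^*_s$, which your sketch also assumes.

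\textbf{The induction is circular.} Your step $i$ needs the analogue of Lemma \ref{key} for $\prod_{j\le i}X_j$ over $\prod_{j\le i}X_{j,l_j}$. At $i=k$, even the existence of a single good $n$ there is already the conclusion of the theorem (for the shifted family). So the induction presupposes what it is meant to prove. The O-diagram fallback only addresses openness, which is not the obstruction. Quantifying the hypothesis over all $C_j$, shifts and products proves a different theorem.

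\textbf{The missing idea.} Factor $X_i\to X_{i,\infty}\to X_{i,l_i}$ and use the hypothesis only at the lower level, where no IP-structure is needed. The paper first shows that $\prod_i X_{i,l_i}$ is a $d$-step wTCF for $C$ of $\prod_i X_{i,\infty}$, using ergodic theory:
\begin{enumerate}
\item the $X_{i,\infty}$ are uniquely ergodic;
\item the hypothesis, combined with Theorem \ref{lei05C}, is used to treat $\mathcal Z_{i,l_i}$ as characteristic for the $C$-averages, and this passes to the product measure;
\item the $d$-fold relatively independent product $\lambda_d$ over $\prod_i X_{i,l_i}$ has support equal to the fibre relation, by distality of $X_{i,\infty}\to X_{i,l_i}$;
\item Theorem \ref{rec th} gives positive averaged measure of the multiple intersections.
\end{enumerate}
With $\pi_{i,\infty}\colon X_i\to X_{i,\infty}$ and $p_0=0$, this yields $n_1$ with $\bigcap_{j=0}^d T_i^{-p_j(n_1)}\pi_{i,\infty}(V^{(i)}_j)\neq\emptyset$ for all $i$.

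One then lifts to $\prod_i X_i$ using the $\infty$-step product result, Theorem \ref{main thm}. Apply it to the sets $V^{(i)}_0,T_i^{-p_1(n_1)}V^{(i)}_1,\dots,T_i^{-p_d(n_1)}V^{(i)}_d$ with the shifted family $p'_j(n)=p_j(n+n_1)-p_j(n_1)$, obtaining $n_2$; then $n=n_1+n_2$ works.

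In this route, dilations, shifts and products are only needed at the $\infty$-step level, where Theorem \ref{polsat} provides them. The hypothesis on $C$ enters only through ergodic averages, where products cause no difficulty.
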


To prove this theorem, we need the following results.

\begin{thm}\cite[Theorem 3]{Le05C}\label{lei05C}
	Let $(X, \mathscr{X}, \mu, T)$ be an ergodic system. For any $r,b \in \mathbb{N}$ there exists $k \in \mathbb{N}$ such that for any system of distinct non-constant polynomials $p_1, \dots, p_r : \mathbb{Z}^d \to \mathbb{Z}$ of degree $\le b$ and any $f_1, \dots, f_r \in L^\infty$ with $\E(f_1|Z_{k-1})\equiv 0$ where $Z_k$ is the maximal $k$-step pro-nilfactor of $X$, one has
	$$\lim_{N\to\infty} \frac{1}{|\Phi_N|} \sum_{u\in\Phi_N} T^{p_1(u)} f_1 \cdots T^{p_r(u)} f_r = 0$$
	in $L^2(X)$ for any Følner sequence $\Phi_N$ in $\mathbb{Z}^d$.
\end{thm}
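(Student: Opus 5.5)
The plan is to deduce the theorem from a seminorm estimate. Let $|||\cdot|||_k$ denote the Host--Kra uniformity seminorms of the ergodic system, and let $Z_{k-1}$ be the factor characterised by the Host--Kra structure theorem: $|||f|||_k=0$ if and only if $\E(f|Z_{k-1})=0$. It then suffices to prove the following. For every $r,b$ there is $k=k(r,b)$ such that for every family of distinct non-constant polynomials $p_1,\dots,p_r:\Z^d\to\Z$ of degree $\le b$, every Følner sequence $\Phi_N$, and all $|f_i|\le 1$,
\[
\limsup_{N\to\infty}\Big\|\frac{1}{|\Phi_N|}\sum_{u\in\Phi_N}T^{p_1(u)}f_1\cdots T^{p_r(u)}f_r\Big\|_{L^2}\le c\,|||f_1|||_k^{\theta}
\]
for some $c,\theta>0$. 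I would prove this by PET induction (Bergelson's polynomial exhaustion technique), combined with the van der Corput lemma for Følner sequences in $\Z^d$.

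\textbf{Step 1 (van der Corput reduction).} Set $g_u=\prod_i T^{p_i(u)}f_i$. The van der Corput lemma reduces the problem to bounding, averaged over $h\in\Z^d$ along a Følner sequence, the quantity $\limsup_N\big|\frac{1}{|\Phi_N|}\sum_u\langle g_{u+h},g_u\rangle\big|$. Choose an index $m$ with $p_m$ of minimal weight, taking $m\neq 1$ whenever possible, and compose with $T^{-p_m(u)}$. This produces the new family $\{p_i(u+h)-p_m(u),\;p_i(u)-p_m(u)\}$, attached to the functions $f_i$ and $\bar f_i$ respectively. Drop the constant terms, which merge with other functions; this is harmless because of the bound $|f_i|\le 1$. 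Apply Cauchy--Schwarz to pass back to an $L^2$ norm of a multiple average.

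\textbf{Step 2 (PET weight and termination).} Attach to a family its \emph{type}: the vector counting, for each degree $\le b$, the number of classes of polynomials that are distinct modulo equal leading-degree parts. With the standard choice of $p_m$, each step of Step 1 strictly decreases the type in the lexicographic order. The number of types reachable from degree $\le b$ and $r$ polynomials is finite and bounded in terms of $(r,b)$ only; the family size grows at most by doubling at each step. Hence after $s(r,b)$ steps we reach a family of linear maps $u\mapsto L_j(u)+c_j(h_1,\dots,h_s)$, of size at most $R(r,b)$. Throughout, I must check that $f_1$ (or a conjugate of it) is attached to a non-constant polynomial that is not swallowed by merging. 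This holds because $p_1-p_m$ is non-constant, $p_1(u+h)-p_m(u)$ is non-constant for all but a negligible set of $h$, and distinct polynomials remain distinct for generic $h$. The degenerate parameters $(h_1,\dots,h_s)$ lie in a finite union of proper algebraic subvarieties, which have density zero and can be discarded.

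\textbf{Step 3 (linear case and collecting seminorms).} For linear families in $\Z^d$ with $L_1$ distinct from the other $L_j$, I would use the Host--Kra type bound: the $\limsup$ of the $L^2$ norm is at most a constant times $|||f_1|||_{R}$ along the relevant subgroup action. This is proved by a further $R$ van der Corput steps, and then passed to the $T$-seminorms using ergodicity and the inductive formula $|||f|||_{k+1}^{2^{k+1}}=\lim_h|||f\cdot T^h\bar f|||_k^{2^k}$. Unwinding the $s$ Cauchy--Schwarz steps then gives the bound with $k=R+s$, which depends only on $(r,b)$. Since $\E(f_1|Z_{k-1})=0$ forces $|||f_1|||_k=0$, the limit vanishes.

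\textbf{Main obstacle.} I expect the hardest part to be Step 2 and its interaction with Step 3. The issues are the choice of $p_m$ keeping $f_1$ in a "leading" non-degenerate position, and the uniformity in $h$ of the resulting linear estimates. The first fallback I would try is to prove the linear bound with constants independent of the coefficients; the bound is then uniform in $h$, and dominated convergence over $h$ closes the argument.
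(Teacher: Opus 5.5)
The paper gives no proof of this statement; it is imported from Leibman's work, where it is obtained along the PET/Host--Kra lines you describe, so your overall route is the expected one. As written, however, Step~2 has a genuine gap: $f_1$ need not survive. After a van der Corput step the polynomials carrying $f_1$ and $\bar f_1$ are $p_1(u+h)-p_m(u)$ and $p_1(u)-p_m(u)$, which differ by $p_1(u+h)-p_1(u)$. When $p_1$ is linear this difference is constant for \emph{every} $h$, so your genericity argument fails precisely for this pair. Consequently, once $f_1$ sits on a linear polynomial while non-linear ones remain, every choice of $m$ loses $f_1$. Either it is discarded ($m=1$: both copies land on constants and are removed by Cauchy--Schwarz), or it is merged into $T^{p_1(h)}f_1\cdot\bar f_1$ ($m\neq1$). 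This happens at once for $d=1$, $p_1(u)=u$, $p_2(u)=u^2$. Here $p_1$ is the unique polynomial of minimal weight, so $m=1$ is forced, and you are left with $\int T^hf_1\,\bar f_1\,T^{(u+h)^2-u}f_2\,T^{u^2-u}\bar f_2\,d\mu$, from which no bound by $|||f_1|||_k$ can come. It also happens for $\{u^2,u^3\}$ with $f_1$ on $u^2$ after one step. The missing idea is to first treat families in which $p_1$ has maximal degree. There you can always take $p_m$ of minimal degree with $m\neq1$, chosen outside the leading class of $p_1$ whenever another class of that degree exists. This property is preserved, and $f_1$ becomes linear only when the whole family is linear, so it is never merged. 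The general case is then reduced to this one by a single preliminary van der Corput step that composes with a polynomial $p_M$ of maximal degree. For linear $p_1$, that step produces $T^{p_1(h)}f_1\cdot\bar f_1$ unless $h$ is kept as a variable.

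Your fallback for uniformity in $h$ does not work either. The linear parts, not just the constant terms, depend on $h$ (already $(u+h)^2-u^2=2hu+h^2$). Moreover, there is no bound $\limsup_N\|\frac1N\sum_{n<N}T^{an}f\|_{L^2}\le c\,|||f|||_R^{\theta}$ with $c,\theta$ independent of $a$, even in one fixed ergodic system. On the dyadic odometer, take $f$ lifted from a $\pm1$ function on $\Z/2^j\Z$. Then $T^{2^jn}f=f$, so the average has norm $1$, while $|||f|||_R$, computed on the factor $\Z/2^j\Z$, is the Gowers norm $\|f\|_{U^R(\Z/2^j\Z)}$, which can be made as small as you like. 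The remedy is to avoid constants altogether. In the maximal-degree case, run the induction qualitatively: ``if $\E(f_1|Z_{k-1})=0$ then the averages tend to $0$'', with $k$ depending only on the number and degrees of the polynomials. Since $f_1$ is never merged, the van der Corput lemma only needs the inner averages to vanish for $h$ off a set of zero density, and the linear base case is needed only for each fixed linear family. For the preliminary step, fix $H$ and bound $\|\frac{1}{|\Phi_N|}\sum_{u\in\Phi_N}g_u\|^2$, up to an error tending to $0$ as $N\to\infty$, by the average of $\langle g_{u+h},g_{u+h'}\rangle$ over $\Phi_N\times\Psi_H\times\Psi_H$. After a diagonal choice $N=N(H)$ this is a F{\o}lner sequence in $\Z^{3d}$; now compose with $T^{-p_M(u+h')}$. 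As polynomials in $(u,h,h')$, all resulting polynomials are distinct and non-constant. The ones carrying $f_1$ and $\bar f_1$ differ by the non-constant $p_1(u+h)-p_1(u+h')$, and $f_1$ sits on one of maximal degree. So the maximal-degree case on $\Z^{3d}$ (at most $2r$ polynomials of degree $\le b$) applies. This is one reason the several-variable formulation with arbitrary F{\o}lner sequences is the convenient one.
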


\begin{rem}
    Let $(X,T)$ be a minimal $\infty$-step pro-nilsystem and $X_k$ be the $d$-step wTCF for $C=\{p_1(n),\dots,p_d(n)\}$ of $X$. Then $(X,T)$ is uniquely ergodic and $Z_k(X)=X_k$ where $Z_k(X)$ denote the maximal $k$-step pro-nilfactor of $X$. So, the $G_\delta$ set $\Omega$ taken in the definition has a full measure and we have $f= \mathbb{E}(f \mid \mathcal{Z}_{\leq k})$ a.e. $x\in X$ where $f\in L^{\infty}(X)$.
    
    Write $f = (f - \mathbb{E}(f \mid \mathcal{Z}_{k-1})) + \mathbb{E}(f \mid \mathcal{Z}_{k-1})$. It is easy to see that $\mathbb{E}\big((f - \mathbb{E}(f \mid \mathcal{Z}_{k-1})) \mid Z_{k-1}\big) = 0$. Replacing $f_i$ by $\mathbb{E}(f_i \mid \mathcal{Z}_k)$ in Theorem \ref{lei05C}, we have
    $$\|\lim_{N\ra\infty}\dfrac{1}{N}\sum_{n=0}^{N-1}(\prod_{j=1}^dT^{p_j(n)}f_j-\prod_{j=1}^dT^{p_j(n)}\E(f_j|\mathcal{Z}_{k}))\|_{L^2}=0.$$
\end{rem}

\begin{thm}\cite[Theorem A]{BL96}\label{rec th}
	Let $(X, \mathscr{X}, \mu, T)$ be a measure preserving transformation and let $A \in \mathscr{X}$ be a set with positive measure, then for each $C = \{p_1, \dots, p_k\}$ of distinct non-constant integral polynomials vanishing at $0$,
	\[
	\liminf_{N\to\infty} \frac{1}{N} \sum_{n=0}^{N-1} \mu\big(A \cap T^{-p_1(n)}A \cap T^{-p_2(n)}A \cap \cdots \cap T^{-p_k(n)}A\big) > 0.
	\]
\end{thm}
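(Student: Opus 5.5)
The "statement immediately above" is Theorem \ref{rec th}, the polynomial Szemer\'edi theorem of Bergelson--Leibman, cited from \cite{BL96}. We are free to use it as a black box, but here is how one would prove it.

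\emph{Reductions.} By ergodic decomposition it suffices to treat ergodic $(X,\mathscr{X},\mu,T)$. If the positive-measure set is split along ergodic components, a component carrying positive mass of $A$ contributes a positive lower bound. Fatou's lemma then passes the $\liminf$ through the integral over components.

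\emph{Step 1: PET induction.} Using the van der Corput lemma and Bergelson's PET induction on the \emph{weight vector} of the polynomial family $C=\{p_1,\dots,p_k\}$, one shows the following. There is a finite step $s$, depending only on $k$ and $\max\deg p_i$, such that $\mathbb{E}(f_j\mid \mathcal{Z}_{s})=0$ for some $j$ forces
\[
\frac1N\sum_{n<N}\prod_i T^{p_i(n)}f_i\to 0
\]
in $L^2$. This is exactly the characteristic-factor statement of Theorem \ref{lei05C}, with $Z_s$ the Host--Kra factor.

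Consequently, for $f=1_A$ we may replace each $f_i$ by $g=\mathbb{E}(1_A\mid\mathcal{Z}_s)$. Then $0\le g\le 1$ and $\int g=\mu(A)>0$, so the $\liminf$ of averages of $\int g\,T^{p_1(n)}g\cdots T^{p_k(n)}g$ equals the desired one. Thus we reduce to an inverse limit of $s$-step nilsystems, and by approximation in $L^2$ to a single ergodic nilsystem $G/\Gamma$ with rotation $\tau$.

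\emph{Step 2: nilsystems.} On a nilsystem, Leibman's theorem says the polynomial sequence
\[
g(n)=(\tau^{p_1(n)},\dots,\tau^{p_k(n)})
\]
acting on $(G/\Gamma)^{k+1}$, through the diagonal point $(x,\dots,x)$, equidistributes on finitely many subnilmanifolds along residue classes. These orbit closures contain the diagonal, because $p_i(0)=0$ forces the value at $n=0$ and, more generally, at $n$ in suitable sub-progressions $n\in r\mathbb{Z}$ with $r$ divisible enough, where $g(n)$ is close to the identity in a Haar-positive sense. Integrating $g\otimes g\otimes\cdots\otimes g$ over the limiting measure $\lambda_x$ on the fibre over $x$ and averaging over $x$ gives a limit
\[
\int g\otimes\cdots\otimes g\,d\lambda ,
\]
where $\lambda$ is a measure on $(G/\Gamma)^{k+1}$ whose projection is Haar and which, along $r\mathbb{Z}$, dominates a positive multiple of a measure supported near the diagonal. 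Positivity then follows from $\int g^{k+1}\,d\mu\ge \mu(A)^{k+1}>0$ via Hölder, plus continuity approximation of $g$.

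\emph{Main obstacle.} The hard part is Step 1, the PET induction: one must show that each van der Corput differencing strictly lowers the weight vector and terminates at linear families controlled by Host--Kra seminorms. A more classical alternative, the original Bergelson--Leibman route, avoids nilsystems entirely. It proceeds by a polynomial van der Waerden/IP-type argument: go up the Furstenberg--Zimmer tower by transfinite induction, handling compact extensions with a polynomial Hales--Jewett-style coloring argument, weak-mixing extensions with PET, and limits by approximation. If pressed, I would follow that route, since the nilsystem approach relies on later results (Host--Kra, Leibman).
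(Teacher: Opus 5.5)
The paper contains no proof of this statement. It is imported from Bergelson--Leibman \cite{BL96} and used as a black box, so your opening remark is exactly the paper's treatment and is all that is needed. The sketch you add, however, would not stand as a proof. The gap is in Step~2, which is where the real work of the nilsystem route lies (Step~1 is precisely the quoted Theorem~\ref{lei05C}). You conclude positivity of $\int g^{\otimes(k+1)}\,d\lambda$ from $\int g^{k+1}\,d\mu\ge\mu(A)^{k+1}$. But the latter is an integral against the diagonal measure, and $\lambda$ is in general singular to it. For an irrational rotation $R_\alpha$ of $\mathbb{T}$ with $p_i(n)=in$, $\lambda$ is Haar measure on the $2$-torus $\{(x,x+t,\dots,x+kt)\}$, and the diagonal is $\lambda$-null. ``Continuity approximation of $g$'' does not bridge this by itself. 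For continuous $h$ close to $g$, the positivity of $\int h^{\otimes(k+1)}\,d\lambda$ coming from $x^{(k+1)}\in\operatorname{supp}\lambda$ depends on the modulus of continuity of $h$, and nothing makes it beat the error $(k+1)\|g-h\|_{L^1(\mu)}$. The mechanism you suggest for uniformity is also false beyond step one; you propose that along $n\in r\mathbb{Z}$ the iterates $\tau^{p_i(n)}$ are themselves close to the identity for many $n$. Take the $2$-step nilsystem $T(x,y)=(x+\alpha,y+x)$ on $\mathbb{T}^2$ and $\chi(x,y)=e^{2\pi i y}$. Then $\langle T^n\chi,\chi\rangle=0$, hence $\|T^n\chi-\chi\|_2=\sqrt2$, for every $n\neq 0$.

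What the argument actually requires is a local product structure of $\lambda$ near the diagonal. Namely, $\lambda$ should dominate a positive multiple of $\int_U(\mathrm{id},\kappa_1,\dots,\kappa_k)_*\mu\,d\nu(\kappa)$, with $\kappa$ ranging over a neighbourhood $U$ of the identity in some group of measure-preserving maps. In the rotation example, $\kappa=(R_t,R_{2t},\dots,R_{kt})$ with $|t|$ small. Granting that, Jensen plus $\|g\circ\kappa_j-g\|_{L^1(\mu)}\to 0$ as $\kappa\to\mathrm{id}$ does give positivity, which is presumably what you intended. But the near-identity maps $\kappa$ come from the group structure of the orbit closure of the diagonal under $(\mathrm{id},\tau^{p_1(n)},\dots,\tau^{p_k(n)})$, not from the iterates $\tau^{p_i(n)}$. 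Establishing that structure requires an algebraic description of this orbit closure, such as Leibman's work on polynomial orbits of the diagonal in powers of a nilmanifold. Your sketch does not supply this substantial input.

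There is also a smaller gap in passing from the pro-nilfactor to a single nilsystem ``by approximation in $L^2$''. Positivity on each nilsystem $Z_{s,m}$ comes with no uniform lower bound, so it does not survive the limit. You need Furstenberg's conditional-expectation trick: with $B=\{g>c\}$ and $B'=\{\mathbb{E}(1_B\mid\mathcal{Z}_{s,m})>1-\frac{1}{2(k+1)}\}$, one gets $\mu(B\cap\bigcap_i T^{-p_i(n)}B)\ge\frac12\,\mu(B'\cap\bigcap_i T^{-p_i(n)}B')$. Finally, the classical route you mention at the end is, in outline, Bergelson--Leibman's own proof, but naming it is not carrying it out. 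For the purposes of this paper, the citation is the proof.
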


\begin{prop}
	Let  $C=\{p_1,p_2,\cdots,p_d\}$ be a family of distinct polynomials vanishing at $0$. If $(X_{i,l_i},T_i)$ be the $d$-step wTCF for $C$ of $(X_i,T_i)$ for every $i\in\{1,2,\dots,k\}$ where $X_{i,l}$ is the maximal $l$-step pro-nilfactor of $X_i$, then $(\prod_{i=1}^{k}X_{i,l_i},\prod_{i=1}^{k}T_i)$ is a $d$-step wTCF for $C$ of $(\prod_{i=1}^{k}X_{i,\infty},\prod_{i=1}^{k}T_i)$ where $X_{i,\infty}$ is the maximal $\infty$-step pro-nilfactor of $X_i$.
\end{prop}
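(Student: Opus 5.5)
We will verify condition (1) of Proposition \ref{wTCF} for the factor maps $\pi_i:X_{i,\infty}\to X_{i,l_i}$. The argument is measure-theoretic, in the spirit of the Remark following Theorem \ref{lei05C}. Each $(X_{i,\infty},T_i)$ is a minimal $\infty$-step pro-nilsystem, hence uniquely ergodic with Haar measure $\mu_i$ of full support. Put $X=\prod_i X_{i,\infty}$, $\mu=\bigotimes_i\mu_i$ and $\widetilde T=\prod_i T_i$.

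Fix open sets $V_0^{(i)},\dots,V_d^{(i)}\subseteq X_{i,\infty}$ such that $\bigcap_j\pi_i(V_j^{(i)})$ contains a non-empty open set $W_i\subseteq X_{i,l_i}$. Choose continuous functions $0\le f_j^{(i)}\le 1_{V_j^{(i)}}$ that are positive on slightly smaller open sets $V_j'^{(i)}$ with $W_i\subseteq\pi_i(V_j'^{(i)})$. Set $F_j=\bigotimes_i f_j^{(i)}$ on $X$. It suffices to show that
\[
\liminf_{N\to\infty}\frac1N\sum_{n=0}^{N-1}\int_X F_0\cdot\widetilde T^{p_1(n)}F_1\cdots\widetilde T^{p_d(n)}F_d\,d\mu>0 .
\]
Indeed, each term factors as $\prod_i\int f_0^{(i)}\prod_j T_i^{p_j(n)}f_j^{(i)}\,d\mu_i$, so positivity of the average forces some $n$ for which every factor is positive. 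For that $n$ we get $V_0^{(i)}\cap T_i^{-p_1(n)}V_1^{(i)}\cap\cdots\cap T_i^{-p_d(n)}V_d^{(i)}\neq\emptyset$ for all $i$ simultaneously.

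\emph{Step 1 (reduction to the factor $\prod_i X_{i,l_i}$).} We replace each $f_j^{(i)}$ by $g_j^{(i)}=\E(f_j^{(i)}\mid\mathcal Z_{l_i})$, the conditional expectation onto the $\sigma$-algebra of $X_{i,l_i}$, and we change the functions one at a time by telescoping. A single change replaces $f_j^{(i)}$ by $f_j^{(i)}-g_j^{(i)}$, which has zero conditional expectation in the $i$-th coordinate. The hypothesis that $X_{i,l_i}$ is a $d$-step wTCF for $C$, combined with the Remark after Theorem \ref{lei05C}, identifies the relevant Host--Kra factor $Z_{l_i}$ as characteristic for the $C$-averages on $X_{i,\infty}$. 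We then need the averages with the modified function to tend to $0$ on the non-ergodic product $X$. For this we pass to the ergodic decomposition of $\mu$ and apply Theorem \ref{lei05C} to each ergodic component. The point to check is that the component's own $Z_{k-1}$ is measurable with respect to $\prod_{i'}\mathcal Z_{k-1}(X_{i'})$. This holds because ergodic components of products of nilsystems are again nilsystems (sub-nilmanifolds), and the Host--Kra factors of a product are contained in the products of the factors. Taking conditional expectation of $F_j$ on this factor then kills the term.

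\emph{This step is the main obstacle.} The degree $k$ coming from Theorem \ref{lei05C} may exceed $l_i$, so the first reduction only brings us down to a finite-step factor $\prod_i X_{i,k}$. The remaining descent from $Z_k$ to $Z_{l_i}$ must be extracted from the wTCF hypothesis on each coordinate, via the Remark, and not from Leibman's theorem alone. I would first try to carry this descent out coordinatewise, using Fubini in the product.

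\emph{Step 2 (positivity on the factor).} On a nilsystem the disintegration of $\mu_i$ over $X_{i,l_i}$ has fibre measures of full support on the fibres; these are Haar measures on cosets. Since $f_j^{(i)}>0$ on $V_j'^{(i)}$ and $W_i\subseteq\pi_i(V_j'^{(i)})$, the function $g_j^{(i)}$ is positive at every point of $W_i$. It is also lower semicontinuous there. Hence, after shrinking $W_i$, there is $c>0$ with $g_j^{(i)}\ge c\,1_{W_i}$ for all $i,j$. Let $\nu_i$ denote the Haar measure on $X_{i,l_i}$ and put $A=\prod_i W_i$, a set of positive $\bigotimes_i\nu_i$-measure. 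The reduced average is then bounded below by
\[
c^{k(d+1)}\frac1N\sum_{n=0}^{N-1}\Big(\bigotimes_i\nu_i\Big)\big(A\cap\widetilde T^{-p_1(n)}A\cap\cdots\cap\widetilde T^{-p_d(n)}A\big),
\]
whose $\liminf$ is positive by Theorem \ref{rec th}. Together with Step 1 this gives the displayed inequality above, and Proposition \ref{wTCF} finishes the proof.
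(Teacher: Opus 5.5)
Your plan is the same as the paper's: replace the $F_j$ by their conditional expectations on $\bigotimes_i\mathcal Z_{l_i}$, then apply Theorem \ref{rec th} on $\prod_i X_{i,l_i}$. It stalls exactly where you say it does, at the descent in Step 1. That step cannot be completed, because the statement is false as written: the per-coordinate hypothesis does not pass to products at the same levels $l_i$.

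\textbf{Counterexample.} Take $X_1=X_2=X=\mathbb{T}^2$ with $T(y,z)=(y+\alpha,z+y)$, $\alpha$ irrational. Take $C=\{n,2n\}$, $d=2$, $l_1=l_2=1$. This $X$ is a minimal $2$-step nilsystem, so $X_{i,\infty}=X$ and $\pi_i\colon X\to X_{i,1}$ is $(y,z)\mapsto y$.

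\emph{The hypothesis holds.} Suppose $1,\alpha,y$ are rationally independent, which is a dense $G_\delta$ condition. Then $(n\alpha,\,ny+\binom{n}{2}\alpha,\,n^2\alpha)$ is equidistributed in $\mathbb{T}^3$, so
\[
L^C_{(y,z)}=\{(y+a,\,z+b,\,y+2a,\,z+2b+e):a,b,e\in\mathbb{T}\}\supseteq\big(\pi_i^{-1}\pi_i(y,z)\big)^2 .
\]

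\emph{The conclusion fails.} On $X\times X$ let $u((y,z),(y',z'))=z'-z$. Then $u\circ(T\times T)^n=u+n(y'-y)$, so every $(w_1,w_2)\in L^C_{(x,x')}$ satisfies $u(w_2)-2u(w_1)+u(x,x')=0$ in $\mathbb{T}$. Now take $w_1=(x,x')$ and $w_2=((y,z),(y',z'+t))$ with $t\neq0$. Both lie in the fibre of $\Pi=\pi_1\times\pi_2$ through $(x,x')$, but they violate this relation. Hence $(\Pi^{-1}\Pi(x,x'))^2\not\subseteq L^C_{(x,x')}$ for every $(x,x')$, and $X_{1,1}\times X_{2,1}$ is not a $2$-step wTCF for $C$ of $X\times X$.

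\textbf{The same example in your measure-theoretic language.} Write $e(t)=e^{2\pi i t}$ and set $F_0=F_2=e(z'-z)$, $F_1=e(2z-2z')$. Then $\E(F_j\mid\mathcal Z_1\otimes\mathcal Z_1)=0$ for each $j$, yet $F_0\cdot(T\times T)^nF_1\cdot(T\times T)^{2n}F_2\equiv1$. So grant, for the sake of argument, that $\mathcal Z_{l_i}$ is characteristic on each $X_{i,\infty}$. The Remark asserts this but does not prove it, and Theorem \ref{lei05C} does not give it, since its $k$ is not $l_i$. Even so, no coordinatewise or Fubini argument can make $\bigotimes_i\mathcal Z_{l_i}$ characteristic on the non-ergodic product.

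\textbf{Your auxiliary claim in Step 1 also fails.} You assert that $Z_{k-1}$ of an ergodic component of the product is measurable with respect to $\bigotimes_{i'}\mathcal Z_{k-1}(X_{i'})$. On the component $\{y'-y=c\}$, however, $e(z'-z)$ is an eigenfunction with eigenvalue $e(c)$. So the component's Kronecker factor is not contained in $\mathcal Z_1(X)\otimes\mathcal Z_1(X)$. The inclusion holds only after raising the level by one. That is harmless for Leibman's $k$, but it is precisely the loss that blocks reaching $l_i$. This is consistent with Theorem QXYY being formulated with $X_{i,d}$ rather than $X_{i,d-1}$.

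The paper's proof breaks at the same point, in its transfer from single systems to the product via ``$h^1_nh^2_n\to F$''. Your Step 2 is fine in itself, but it is moot without a correct characteristic factor for the product, and at the levels $l_i$ there is none.
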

\begin{proof}
We show the case $k=2$ and, for general $k\in\N$, it is essentially the same. 
Let $\pi_1 : X_{i,\infty} \to X_{i,l_i},i=1,2$ be the factor maps.
	Set $\mu_i$ be the uniquely ergodic measure of $(X_{i,\infty}, T_i)$, and $(Z_{i,l_i}, \mathcal{Z}_{i,l_i}, \mu_{i,l_i}, T_i)$ be the maximal $l_i$-step pro-nilfactor of $(X_{i,\infty}, \mathcal{X}_{i,\infty}, \mu_i, T_i)$.
	It is known that $Z_{i,l_i} = X_{i,l_i}$, so $\pi_i$ can be viewed as the continuous factor map from $(X_{i,\infty}, \mu_i, T_i)$ to $(Z_{i,l_i}, \mu_{i,l_i}, T_i)$. In this case, $\mu_{1}\times\mu_{2}$ and $\mu_{1,l_1}\times\mu_{2,l_2}$ are the unique measure on $X_{1,\infty}\times X_{2,\infty}$ and $Z_{1,l_1}\times Z_{2,l_2}$ respectively.
	
	Let $\mu_{i,\infty} = \int_{Z_{i,l_i}} \mu_{i,z_i} d\mu_{i,l_i}(z_i)$ be the disintegration of $\mu_{i,\infty}$ over $\mu_{i,l_i}$. 
	For $d \in \mathbb{N}$, let $L_{d}^\mu = \text{Supp}(\lambda_d)$, where $$
	\lambda_d = \int_{Z_{1,l_1}\times Z_{2,l_2}} \prod_{j=1}^d \mu_{1,z_1}\times \mu_{2,z_2}\, d(\mu_{1,l_1}\times\mu_{2,l_2})(z_1,z_2).$$
	
	\noindent{\bf Claim 1.} $\text{Supp}(\lambda_d)=R_{\pi_1\times\pi_2}^d$ where $$R_{\pi_1\times\pi_2}^d:=\{(x^{(1)}_j,x^{(2)}_j)_{j=1}^d: \pi_i x^{(i)}_{1}=\dots \pi_i x^{(i)}_{d},i=1,2\}.$$
\begin{proof}
We note that $\lambda_d^k(R_{\pi_k}^d) = 1$, so $\text{Supp}(\lambda_d^k) \subset R_{\pi_k}^d$. There is a measurable set $Y_i \subset X_{i,l_i}$ with full measure such that for any $y_i \in Y_i$, $\text{Supp}(\mu_{i,y_i}) = \pi_i^{-1}(y_i), i=1,2$. Let $W = \text{Supp}(\lambda_d)$. Since
	\[
	\lambda_d(W) = \int_{Y_1\times Y_2} \prod_{j=1}^d \mu_{1,y_1}\times\mu_{2,y_2}(W) \, d(\mu_{1,l_1}\times\mu_{2,l_2})(y_1,y_2) = 1,
	\]
	we have that for a.e. $(y_1,y_2) \in Y_1\times Y_2$, $\prod_{j=1}^d \mu_{1,y_1}\times\mu_{2,y_2}(W) = 1$. This implies that $\prod_{j=1}^d \text{Supp}(\mu_{1,y_1}\times\mu_{2,y_2}) \subset W$, a.e. $(y_1,y_2) \in Y_1\times Y_2$. Thus by the distality of $\pi_{i}$, the map $(y_1,y_2) \mapsto (\pi_1\times\pi_2)^{-1}(y_1,y_2)$ (from $X_{1,l_1}\times X_{2,l_2}$ to $2^{X_{1,\infty}\times X_{2,\infty}}$) is continuous and we conclude that $R_{\pi_1\times \pi_2}^d \subset\text{Supp}(\lambda_d)$. Thus, we get that $\text{Supp}(\lambda_d) = R_{\pi_1\times \pi_2}^d$.
\end{proof}

	\noindent{\bf Claim 2.} For any non-empty open subsets $U^{(i)}_0,\dots,U_d^{(i)}$ of $X_{i,\infty}$ with $\bigcap_{j=0}^d\pi_i(U_j^{(i)})\ne\emptyset$, there exists $n\in\N$ such that
	$$U^{(i)}_0 \cap T^{-p(n)} U^{(i)}_1 \cap \cdots \cap T^{-p_d(n)} U^{(i)}_d \neq \emptyset,  i=1,2.$$

\begin{proof}
	Since $(X_{i,l_i},T_i)$ be the $d$-step wTCF for $C$ of $(X_{i,\infty},T_i)$ where $X_{i,\infty}$ is a $\infty$-step pro-nilsystem (so $Z_l(X_{i,\infty})=X_{i,l}$), it follows from Theorem \ref{lei05C} that $$\|\lim_{N\ra\infty}\dfrac{1}{N}\sum_{n=0}^{N-1}(\prod_{j=1}^dT_i^{p_j(n)}f_j^{(i)}-\prod_{j=1}^dT_i^{p_j(n)}\E(f_j^{(i)}|\mathcal{Z}_{i,l_i}))\|_{L^2(X_{i,\infty})}=0$$
	where $f^{(i)}_j\in L^\infty(X_{i,\infty})$. 
	Since for any $F\in L^\infty(X_{1,l_1}\times X_{2,l_2})$, there exist $h^{i}_n\in L^\infty(X_i)$ such that $\|h^1_n h^2_n\ra F\|_{L^2(X_1\times X_2)}\ra 0$. Then we have
	$$\|\lim_{N\ra\infty}\dfrac{1}{N}\sum_{n=0}^{N-1}(\prod_{j=0}^dT^{p_j(n)}F_j^{(i)}-\prod_{j=0}^dT^{p_j(n)}\E(F_j^{(i)}|\mathcal{Z}_{1,l_1}\times \mathcal{Z}_{2,l_2}))\|_{L^2(X_{1,\infty}\times X_{2,\infty})}=0$$
	for any $F^{(i)}_j\in L^\infty(X_{1,\infty}\times X_{2,\infty})$ where $T=T_1\times T_2$. 
	
	Let $(x_j^{(1)},x_j^{(2)})_{j=0}^d\in R_{\pi_1\times\pi_2}^{d+1}=\text{Supp}(\lambda_{d+1})$ and let $	U:=(U^{(1)}_j,U_j^{(2)})_{j=0}^d$ be a neighborhood of $(x_j^{(1)},x_j^{(2)})_{j=0}^d$, then it follows that
	$$\lambda_d(U)= \int_{Z_{1,l_1}\times Z_{2,l_2}} \prod_{j=1}^d \E(1_{U^{(1)}_j}\times 1_{U^{(2)}_j}|\mathcal{Z}_{1,l_1}\times \mathcal{Z}_{2,l_2}) \, d(\mu_{1,l_1}\times\mu_{2,l_2} ) >0.$$
	Thus, $$\begin{aligned}
		& \lim_{N\to\infty} \frac{1}{N} \sum_{n=0}^{N-1} \mu_1\times\mu_2\bigl((U^{(1)}_0,U^{(2)}_0)\cap \bigcap_{j=1}^{d} T^{-p_j(n)}(U^{(1)}_j,U^{(2)}_j)\bigr) \\
		=& \lim_{N\to\infty} \frac{1}{N} \sum_{n=0}^{N-1} \int_{X_1\times X_2} 1_{U^{(1)}_0\times U^{(2)}_0}(x) \cdot\prod_{j=1}^d 1_{U^{(1)}_j\times U^{(2)}_j}(T^{p_j(n)}(x_1,x_2))\, d(\mu_1\times\mu_2)(x_1,x_2) \\
		=& \lim_{N\to\infty} \frac{1}{N} \sum_{n=0}^{N-1} \int_{Z_{1,l_2}\times Z_{2,l_2}} 
		1_{U^{(1)}_0\times U^{(2)}_0}(x) \cdot\prod_{j=1}^d \E(1_{U^{(1)}_j\times U^{(2)}_j}|\mathcal{Z}_{1,l_1}\times \mathcal{Z}_{2,l_2})(z_1,z_2)\\
		&\ \ \ \ \ \ \ \ \ \ \ \ \ \cdot\prod_{j=1}^d\E(1_{U^{(1)}_j\times U^{(2)}_j}|\mathcal{Z}_{1,l_1}\times \mathcal{Z}_{2,l_2})(T_1^{p_j(n)}z_1,T_2^{p_j(n)}z_2))\, d(\mu_{1,l_1}\times\mu_{2,l_2} )(z_1,z_2)
		\\
		\ge& \liminf_{N\to\infty} \frac{1}{N} \sum_{n=0}^{N-1} a^{d+1} \int_{Z_{1,l_2}\times Z_{2,l_2}} 1_{A_a}(z_1,z_2)\prod_{j=1}^d 1_{A_a}(T_1^{p_j(n)}z,T_2^{p_j(n)}z)\, d(\mu_{1,l_1}\times\mu_{2,l_2} )(z_1,z_2) \\
		=& \liminf_{N\to\infty} \frac{1}{N} \sum_{n=0}^{N-1} a^{d+1} \mu_{1,l_1}\times\mu_{2,l_2}\bigl(A_a \cap T^{-p_1(n)}A_a \cap T^{-p_2(n)}A_a \cap \cdots \cap T^{-p_d(n)}A_a\bigr),
	\end{aligned}$$
	where $a>0$ and
	$$
	A_a = \bigl\{ (z_1,z_2) \in Z_{1,l_2}\times Z_{2,l_2} : \E(1_{U^{(1)}_j\times U^{(2)}_j}|\mathcal{Z}_{1,l_1}\times \mathcal{Z}_{2,l_2})(z_1,z_2)> a ,\forall 1\leq j\leq d\bigr\}.
	$$
	As $\E(1_{U^{(1)}_j\times U^{(2)}_j}|\mathcal{Z}_{1,l_1}\times \mathcal{Z}_{2,l_2}) \le 1$, $j=0,1,\dots,d$, by Theorem \ref{lei05C} we can get that
	\begin{align*}
		0 < b :=& \int_{Z_{1,l_2}\times Z_{2,l_2}}\prod_{j=0}^d \E(1_{U^{(1)}_j\times U^{(2)}_j}|\mathcal{Z}_{1,l_1}\times \mathcal{Z}_{2,l_2})\, d(\mu_{1,l_1}\times\mu_{2,l_2} )\\
		=&\left( \int_{A_a}+\int_{Z_{1,l_2}\times Z_{2,l_2} \setminus A_a}\right) \E(1_{U^{(1)}_j\times U^{(2)}_j}|\mathcal{Z}_{1,l_1}\times \mathcal{Z}_{2,l_2})\, d(\mu_{1,l_1}\times\mu_{2,l_2} )\\
		\le& \mu_{1,l_1}\times\mu_{2,l_2}(A_a) + a\mu_{1,l_1}\times\mu_{2,l_2} (Z_{1,l_2}\times Z_{2,l_2} \setminus A_a) \\
		=& a + (1-a)\mu_{1,l_1}\times\mu_{2,l_2} (A_a).
	\end{align*}
	Now we take $a>0$ such that $\mu_{1,l_1}\times\mu_{2,l_2}(A_a)>0$, it follows from Theorem \ref{rec th} that $$\lim_{N\to\infty} \frac{1}{N} \sum_{n=0}^{N-1} \mu_1\times\mu_2\bigl((U^{(1)}_0,U^{(2)}_0)\cap \bigcap_{j=1}^{d} T^{-p_j(n)}(U^{(1)}_j,U^{(2)}_j)\bigr)>0.$$
	Then there exists $n\in\N$ such that
	$$U^{(i)}_0 \cap T^{-p(n)} U^{(i)}_1 \cap \cdots \cap T^{-p_d(n)} U^{(i)}_d \neq \emptyset, i=1,2.$$
\end{proof}

Then it follows from Proposition \ref{wTCF} that  $(X_{1,l_1}\times X_{2,l_2},T_{1,l_1}\times T_{2,l_2})$ is a $d$-step wTCF for $C$ of $X_{1,\infty}\times X_{2,\infty}$. This completes the proof.    
\end{proof}

\begin{proof}[Proof of Theorem \ref{generalization of main1}]

Let $\pi^{(i)}_{k,j}$ be the factor map from $X_k$ to $X_{j}$ with $k,j\in\N\cup\{\infty\}, k> j$. Let $V_0^{(i)},\dots,V_d^{(i)}$ be open sets of $X_i$ such that  $\bigcap_{j=0}^{d}\pi^i_{\infty}(V_j^{(i)})\ne\emptyset$. Suppose that there are open sets  $U_j^{(i)}\in X_{i,\infty}$ such that $U^{(i)}_j\subset \pi^{(i)}_{\infty}(V_j^{(i)})$ and we have $\bigcap_{j=0}^{d}\pi^{(i)}_{i,l_i}(U_j^{(i)})\ne\emptyset,\forall i\in[k]$. 
Since $(\prod_{i=1}^{k}X_{i,l_i},\prod_{i=1}^{k}T_i)$ is a $d$-step wTCF for $C$ of the product system $(\prod_{i=1}^{k}X_{i,\infty},\prod_{i=1}^{k}T_i)$, by Proposition \ref{wTCF} there is some $n_1 \in \mathbb{N}$ such that
\[
U^{(i)}_0 \cap T^{-p(n_1)} U^{(i)}_1 \cap \cdots \cap T^{-p_d(n_1)} U^{(i)}_d \neq \emptyset,\forall i\in[k].
\]
Then we have
\[
\begin{aligned}
 \pi_\infty^{(i)}(V^{(i)}_0) &\cap \pi_\infty^{(i)}(T^{-p_1(n_1)} V^{(i)}_1) \cap \cdots \cap \pi_\infty^{(i)}(T^{-p_d(n_1)} V^{(i)}_d) \\
& \supseteq U^{(i)}_0 \cap T^{-p_1(n_1)} U^{(i)}_1 \cap \cdots \cap T^{-p_d(n_1)} U^{(i)}_d \neq \emptyset,\forall i\in[k].
\end{aligned}
\]
As $R_{\pi_{\infty,l_i}}\subset \RP^{[\infty]}_{X_\infty}$, it follows Lemma \ref{keyopen} that there is some $n_2 \in \mathbb{N}$ such that
$$V^{(i)}_0 \cap T^{-p'_1(n_2)} (T^{-p_1(n_1)}V^{(i)}_1) \cap \cdots \cap T^{-p'_d(n_2)} (T^{-p_d(n_1)}V^{(i)}_d) \neq \emptyset,\forall i\in[k]$$
where $p'_j(n):=p_j(n+n_1)-p_j(n_1)$. i.e.,
$$V^{(i)}_0 \cap T^{-p_1(n_1+n_2)} V^{(i)}_1 \cap \cdots \cap T^{-p_d(n_1+n_2)} V^{(i)}_d \neq \emptyset,\forall i\in[k].$$
By Proposition \ref{wTCF} we can deduce that $(\prod_{i=1}^{k}X_{i,l},\prod_{i=1}^{k}T_i)$ is a $d$-step wTCF for $C$ of $X$.

\end{proof}

\section{Application of the theorem}
	 
	Let $\mathcal{A}:=\{p_1,p_2,\cdots,p_n,\dots\}$ be a family of distinct polynomials vanishing at $0$, for $d\in\N,n\in\Z$ we define the set 
	$\{p_{1}(n), p_{2}(n), \ldots, p_{d}(n)\}$ as a \emph{$\A_d$-set}. 
	Let $\mathcal{F}_{d\text{-}\A}$ denote the family of sets that contain infinitely many $\A_d$-sets. 
	Furthermore, let $\mathcal{F}_{\infty\text{-}\A}$ 
	denote the family of sets that contain at least one $\A_d$-set for each $d\in\mathbb{N}$. For $k\geq 2$ and a tuple $\mathbf{ x}=(x_1,\ldots , x_k) \in X^k$, $d\in \mathbb{N}\cup \{\infty\}$,
	we shall denote  the set of all independent $k$-tuples along $\mathcal{F}_{d\text{-} \A}$ 
	by $\mathrm{Ind}_{d \text{-} \A}^{(k)}(X)$.

    \medskip
	As an appplication of Theorem \ref{main thm} we are able to show the following theorem.
	\begin{thm}\label{App}
		Let $\mathcal{A}:=\{p_1,p_2,\cdots,p_n,\dots\}$ be a family of distinct polynomials vanishing at $0$ and let $(X,T)$ be a minimal system.
		Then there exists a dense $G_\delta$ subset $\Omega$ of $X$ such that for any $x\in \Omega$ and any $k\ge 2$,
		\begin{equation}
			(\mathbf{RP}^{[\infty]}[x])^{(k)}\subseteq Ind_{\infty \text{-} \A}^{(k)}(X),
		\end{equation}
		where $\mathbf{RP}^{[\infty]}[x]=\{x'\in X:(x,x')\in \mathbf{RP}^{[\infty]}\}$.
	\end{thm}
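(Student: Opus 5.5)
The plan is to derive Theorem \ref{App} from Theorem \ref{main thm}, applied to the self-product of $k$ copies of $X$. We then choose the points $x$ by a Baire category argument, in the spirit of Proposition \ref{equi of wtcf}.

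\textbf{Setup and the fiber reduction.} Let $\pi:X\to X_\infty$ be the factor map. The $G_\delta$ set $\Omega$ will be the set of points $x$ at which the upper semicontinuous map $y\mapsto \pi^{-1}(y)$ is continuous at $\pi(x)$. This set is residual, so it contains a dense $G_\delta$ set. For $x\in\Omega$ we have $\mathbf{RP}^{[\infty]}[x]=\pi^{-1}(\pi(x))$. Fix $k\ge2$, points $x_1,\dots,x_k\in\pi^{-1}(\pi(x))$, and neighborhoods $U_1,\dots,U_k$ of them. We must produce a set $F\in\mathrm{Ind}(U_1,\dots,U_k)$ that contains an $\A_d$-set $\{p_1(n),\dots,p_d(n)\}$ for every $d$. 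It suffices to find, for each $d$, an $n$ such that $\{p_1(n),\dots,p_d(n)\}$ is an independence set. This is enough because a union over $d$ of independence sets need not be independent, but the family $\mathcal F_{\infty\text{-}\A}$ only asks for membership via a single $F$. So we instead build one $F$ inductively: at stage $d$ we add $\{p_1(n_d),\dots,p_d(n_d)\}$ while keeping $F_d=F_{d-1}\cup\{p_j(n_d)\}_j$ independent. Then $F=\bigcup_d F_d$ is independent, since independence is a condition on finite subsets and every finite subset lies in some $F_d$.

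\textbf{Key step: one inductive stage.} Suppose $F_{d-1}$ is finite and independent. For each colouring $s\in\{1,\dots,k\}^{F_{d-1}}$, the set $V_s=\bigcap_{j\in F_{d-1}}T^{-j}U_{s(j)}$ is nonempty and open. We need $\pi(V_s)$ to meet the fiber data. To arrange this, strengthen the inductive hypothesis to: $\pi(x)\in\pi(V_s)$ for every $s$, or at least $\bigcap_s\pi(V_s)\cap\bigcap_i\pi(U_i)\ne\emptyset$. Now list all pairs $(s,t)$ with $t\in\{1,\dots,k\}^{\{1,\dots,d\}}$, and consider the product system $X^{M}$, where $M$ is the number of such pairs. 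On the coordinate indexed by $(s,t)$, take the open sets
\[
V_0=V_s,\quad V_1=U_{t(1)},\ \dots,\ V_d=U_{t(d)}.
\]
On each coordinate, $\bigcap\pi(\cdot)\ne\emptyset$ holds by the strengthened hypothesis. Theorem \ref{main thm}, applied to $M$ copies of $\pi$ (each with $R_\pi=\mathbf{RP}^{[\infty]}$), gives one $n=n_d$ with
\[
V_s\cap T^{-p_1(n)}U_{t(1)}\cap\cdots\cap T^{-p_d(n)}U_{t(d)}\ne\emptyset \quad\text{for all } (s,t).
\]
Taking $n$ large, the new values are distinct from $F_{d-1}$ and from each other. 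Then this says exactly that $F_d$ is independent.

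\textbf{Main obstacle.} The main difficulty is propagating the strengthened hypothesis: the new cells $V_{s'}$ must still have $\pi$-images with a common point, say near $\pi(x)$. The plan is to get this by shrinking. At each stage, replace $U_i$ by smaller neighborhoods $U_i^{(d)}\subset U_i$ of the $x_i$, and require the cells to project onto a neighborhood of $\pi(x)$. For this, apply Theorem \ref{main thm} with $V_0$ replaced by $V_s\cap\pi^{-1}(W)$ for a small open $W\ni\pi(x)$. Continuity of the fiber map at $\pi(x)$ guarantees that each $U_i$ meets $\pi^{-1}(y)$ for all $y$ near $\pi(x)$, so the hypotheses of Theorem \ref{main thm} persist. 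If this fails, the fallback is a weaker goal: establish only that each $\A_d$-set individually is independent, and then handle the combination step separately.
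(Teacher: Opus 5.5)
Your overall strategy can be made to work, but as written the inductive step is not closed. The hypotheses you propose to carry from stage to stage do not propagate: $\pi(x)\in\pi(V_s)$ for all $s$, a common point of all the $\pi(V_s)$, and cells projecting onto a neighbourhood of $\pi(x)$ all fail. The conclusion of Theorem~\ref{main thm} only says the new cells are nonempty and gives no control on where they project. You leave this as the ``main obstacle'', and your fallback does not prove the statement: making each $\A_d$-set independent separately does not produce a single independence set in $\mathcal F_{\infty\text{-}\A}$.

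The right invariant is weaker and needs no shrinking. By continuity of $y\mapsto\pi^{-1}(y)$ at $\pi(x)$, fix \emph{once} an open $W\ni\pi(x)$ with $W\subseteq\bigcap_i\pi(U_i)$. Carry only the condition $V_s\cap\pi^{-1}(W)\neq\emptyset$ for every colouring $s$ of $F_{d-1}$, which is trivial for $F_0=\emptyset$. At stage $d$, use on coordinate $(s,t)$ the sets $V_s\cap\pi^{-1}(W),U_{t(1)},\dots,U_{t(d)}$. Their images intersect in $\pi(V_s\cap\pi^{-1}(W))\neq\emptyset$, because this set lies in $W\subseteq\pi(U_{t(j)})$. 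The resulting $n_d$ gives $V_s\cap\pi^{-1}(W)\cap\bigcap_j T^{-p_j(n_d)}U_{t(j)}\neq\emptyset$ for all $(s,t)$. This is at once the independence of $F_d$ and the invariant for the next stage.

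You also do not need $n_d$ large, which Theorem~\ref{main thm} does not provide anyway. Coincidences among the new values, or with $F_{d-1}$, are harmless: a colouring of $F_d$ induces a consistent pair $(s,t)$. This invariant is exactly what the paper gets for free in Lemma~\ref{open-infty-step}. There $\pi$ is open, so $U_i$ can be replaced by $U_i\cap\pi^{-1}(W)$; every cell then lies in some $U_{\eta(0)}$ and projects into $W$.

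With this repair your route genuinely differs from the paper's. The paper passes to the O-diagram of Theorem~\ref{hsy} and takes $\Omega_X=\pi^{-1}\{y:|\tau^{-1}(y)|=1\}$. It lifts the fibre tuple into a single fibre of the open map $\pi^*$ and applies Lemma~\ref{open-infty-step} there, so only the open case, Lemma~\ref{keyopen}, is used. It then pushes independence down by Proposition~\ref{factor-ind}. You stay in $X$ and work at continuity points of the fibre map, where $\pi$ behaves like an open map along the fibre. This avoids lifting and projecting, but you now need Theorem~\ref{main thm} for the non-open map $\pi$.

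Be careful there. The paper's reduction of the non-open case to Lemma~\ref{keyopen} only works when $\bigcap_j\pi(V_j)$ has nonempty interior: one then picks a point with a singleton $\tau$-fibre inside it. With mere nonemptiness the statement fails. Take a Sturmian extension of a rotation, $p_1(n)=n$, $p_2(n)=2n$, and $V_0=V_2$, $V_1$ small cylinders around the two points of a double fibre over $y$, with images $[y,y+\epsilon]$ and $[y-\epsilon,y]$; then no $n\in\N$ works. In your application the interior condition does hold, because $\pi(V_s\cap\pi^{-1}(W))$ has nonempty interior: factor maps between minimal systems are semi-open. The same semi-openness is what makes your $\Omega=\pi^{-1}(\text{continuity points})$ dense, which you asserted without justification. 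So your route does not really avoid the O-diagram; it is hidden inside the cited theorem.
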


	\begin{lem}\label{ap-open-d-step}
			Let $C=\{p_1,\dots,p_d\}\subseteq\A$ and let $\pi:(X,T)\to (Y,T)$ be a factor map of minimal systems and $d,l\in\mathbb{N}$.
			If  $\pi$ is open and $R_{\pi}\subseteq \mathbf{RP}_X^{[\infty]}$, then
			for any open sets $U_0^{(i)},U_1^{(i)},\ldots,U_d^{(i)}\subseteq X$ with
			$\bigcap_{j=0}^d\pi(U_j^{(i)})\neq\emptyset$ for every $i\in [l]$,
			there exists $n\in \mathbb{N}$ such that
			\[
			U_0^{(i)}\cap T^{-p_1(n)}U_1^{(i)}\cap \cdots\cap T^{-p_d(n)}U_d^{(i)}\neq\emptyset,\quad \forall\; i\in[l].
			\]
			i.e.,\[\prod_{i=1}^{l}U_0^{(i)}\cap \widetilde{T}^{-p_1(n)}\prod_{i=1}^{l}U_1^{(i)}\cap \cdots\cap \widetilde{T}^{-p_d(n)}\prod_{i=1}^{l}U_d^{(i)}\neq\emptyset,\] where $\widetilde{T}=T\times \cdots\times T$ ($l$-times).
		\end{lem}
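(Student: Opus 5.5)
This is the special case of Lemma \ref{keyopen} in which all the factor maps coincide. The plan is to apply Lemma \ref{keyopen} with $k=l$ copies of the same system: set $(X_i,T_i)=(X,T)$ and $\pi_i=\pi$ for every $i\in[l]$. Each $\pi_i$ is open and satisfies $R_{\pi_i}\subseteq \mathbf{RP}^{[\infty]}_{X_i}$. The polynomials $p_1,\dots,p_d$ are distinct, non-constant and vanish at $0$, since they belong to the family $\A$ of distinct polynomials vanishing at $0$. If $\A$ is allowed to contain constant polynomials, the only such member is $0$; I would remove it beforehand, because the condition $U_0^{(i)}\cap T^{0}U_j^{(i)}$ is then handled by shrinking $U_0^{(i)}$ to $U_0^{(i)}\cap U_j^{(i)}$, although this shrinking must preserve the hypothesis $\bigcap_j \pi(U_j^{(i)})\neq\emptyset$. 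This is a minor point, and I would simply assume the members of $C$ are non-constant, as in Theorem \ref{main thm}.

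With these choices, the hypothesis $\bigcap_{j=0}^d\pi(U_j^{(i)})\neq\emptyset$ for each $i$ is exactly the hypothesis of Lemma \ref{keyopen}. That lemma gives a single $n\in\mathbb{N}$ with
\[
U_0^{(i)}\cap T^{-p_1(n)}U_1^{(i)}\cap\cdots\cap T^{-p_d(n)}U_d^{(i)}\neq\emptyset
\]
for all $i\in[l]$ simultaneously.

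The reformulation in terms of $\widetilde T=T\times\cdots\times T$ is immediate. A product of sets is nonempty if and only if each factor is nonempty, and $\widetilde T^{-p_j(n)}\prod_i U_j^{(i)}=\prod_i T^{-p_j(n)}U_j^{(i)}$. Hence the intersection of products equals the product of the coordinatewise intersections, and it is nonempty exactly when every coordinate intersection is nonempty.

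The only point needing care is that Lemma \ref{keyopen} is stated for $k$ possibly different systems with a single $n$ working for all of them. Its proof never uses that the systems differ: the inductive step applies Lemma \ref{key} to the product $\prod_{j\le i}X_j$, whose maximal $\infty$-step pro-nilfactor is $\prod_{j\le i}X_{j,\infty}$ in the repeated-copy case as well. So no new argument is needed, and I expect no real obstacle beyond checking that the hypotheses transfer verbatim.
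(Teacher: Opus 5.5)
Your proposal is correct and matches the paper's proof: the paper also takes $(X_i,T_i)=(X,T)$ and $\pi_i=\pi$ for all $i\in[l]$ and applies Lemma \ref{keyopen} directly. Your decision to assume the members of $C$ are non-constant is necessary, not just convenient. If some $p_j\equiv 0$ and $\pi$ is not injective, disjoint $U_0^{(i)},U_j^{(i)}$ with overlapping $\pi$-images already violate the conclusion, so the shrinking trick you mention would not rescue that case.
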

		\begin{proof}
			By taking $\pi_i=\pi$, it is follows from Lemma \ref{keyopen} directly.
		\end{proof}
		
		\begin{lem}\label{open-infty-step}
			Let $\pi:(X,T)\to (Y,T)$ be a factor map of minimal systems and $d\in\mathbb{N}\cup\{\infty\}$.
			If  $\pi$ is open and $R_{\pi}\subseteq  \mathbf{RP}_X^{[\infty]}$, then $R_{\pi}^{(k)}\subseteq Ind_{\infty\text{-}\A}^{(k)}(X)$
			for any $k\geq 2$,
			where 	$R_\pi^{(k)}=\{(x_1,\ldots,x_k)\in X^k:\pi(x_1)=\cdots=\pi(x_k)\}$.
		\end{lem}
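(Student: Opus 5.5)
We have to show that every tuple $(x_1,\ldots,x_k)\in R_\pi^{(k)}$ lies in $Ind_{\infty\text{-}\A}^{(k)}(X)$. Fix neighborhoods $U_1,\ldots,U_k$ of $x_1,\ldots,x_k$. It suffices to produce, for each $d\in\N$, an $\A_d$-set $\{p_1(n),\ldots,p_d(n)\}$ that is an independence set for $(U_1,\ldots,U_k)$. Then the union over $d$ of these sets lies in $\mathcal{F}_{\infty\text{-}\A}$. Independence is hereditary, and a union of independence sets need not be independent, so we want a single set that is independent. I would handle this by noting that $\mathcal{F}_{\infty\text{-}\A}$ only requires containing one $\A_d$-set for each $d$. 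I plan to build one independence set $F=\{p_1(n),\ldots,p_D(n)\}$ for a single large $D$. If the family $\mathcal{F}_{\infty\text{-}\A}$ is read literally, we need one $F$ containing $\A_d$-sets for all $d$. That is obtained inductively, by enlarging the independence set step by step; this is the delicate point discussed below.

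\textbf{Single $d$.} Since $\pi(x_1)=\cdots=\pi(x_k)=:y$, we have $y\in\bigcap_{s}\pi(U_s)$, and each $\pi(U_s)$ is open because $\pi$ is open. Enumerate all maps $s:[d]\to[k]$; there are $l=k^d$ of them. For each $s$ define the tuple of open sets
\[
U_0^{(s)}=X,\qquad U_j^{(s)}=U_{s(j)}\quad (j\in[d]).
\]
Then $\bigcap_{j=0}^d\pi(U_j^{(s)})\ni y$ is nonempty for every $s$. Lemma \ref{ap-open-d-step} now gives one $n$ such that, for all $s$ simultaneously,
\[
\bigcap_{j=1}^d T^{-p_j(n)}U_{s(j)}\neq\emptyset .
\]
By the definition of an independence set, this means $\{p_1(n),\ldots,p_d(n)\}$ is independent for $(U_1,\ldots,U_k)$. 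Distinctness of the $p_j$ (differences non-constant) ensures that for all but finitely many $n$ the values $p_j(n)$ are distinct. If needed, we can take $U_0^{(s)}$ to be a small neighborhood of $x_1$ to force $n$ large, by applying the lemma to shifted polynomials as in the proof of Lemma \ref{keyopen}.

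\textbf{Infinitely many $d$ in one set.} The main obstacle is combining the different $d$ into a single independence set. I would avoid it by reading $\mathcal{F}_{\infty\text{-}\A}$ as requiring, for each $d$, an $\A_d$-set. An $\A_D$-set with $D\ge d$ contains the $\A_d$-set $\{p_1(n),\ldots,p_d(n)\}$ for the same $n$, so one independent $\A_D$-set already covers all $d\le D$. If instead one set must work for every $d$ at once, I would construct $n_1,n_2,\ldots$ inductively. At stage $D$, I apply Lemma \ref{ap-open-d-step} with $d=D$ and the polynomials $p_1,\ldots,p_D$. The finitely many previously fixed times $p_j(n_i)$, $i<D$, are absorbed into the $0$-th set: for each pattern $s$ on the old set $F_{D-1}$, we take $U_0^{(s,t)}=\bigcap_{m\in F_{D-1}}T^{-m}U_{s(m)}$. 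This set is nonempty and open by the induction hypothesis, but its $\pi$-image need not contain $y$.

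Ensuring that $\pi(U_0)$ meets $\bigcap_j\pi(U_{t(j)})$ is the delicate step. I would try to secure it by keeping the auxiliary fibre condition $y\in\pi\bigl(\bigcap_m T^{-m}U_{s(m)}\bigr)$ as part of the induction hypothesis. To propagate that condition, I would choose $n$ using the $\mathcal{F}_{fip}$ recurrence on $Y$ (Lemma \ref{pol rec for open set} combined with Lemma \ref{fip*}), in the same way that the $W_l$-conditions are carried along in the proof of Lemma \ref{keyopen}.
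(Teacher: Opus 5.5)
Your single-$d$ step (base set $X$, with Lemma \ref{ap-open-d-step} applied to all $k^d$ patterns) is fine. Absorbing the old times into the $0$-th set is also the right structure. But the proof stops exactly where an idea is needed, so there is a genuine gap.

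First, the shortcut reading is not available. Independence along $\mathcal{F}_{\infty\text{-}\A}$ means $\mathrm{Ind}(U_1,\ldots,U_k)\cap\mathcal{F}_{\infty\text{-}\A}\neq\emptyset$. That is, you need one independence set containing an $\A_d$-set for every $d\in\N$. A finite set $\{p_1(n),\ldots,p_D(n)\}$ only covers $d\le D$, so the inductive construction is the entire content of the lemma.

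Second, in that construction you never verify the hypothesis $\pi(U_0^{(s,t)})\cap\bigcap_j\pi(U_{t(j)})\neq\emptyset$ of Lemma \ref{ap-open-d-step}, and the remedy you sketch would not supply it. Lemmas \ref{pol rec for open set} and \ref{fip*} only produce polynomial return times for open subsets of $Y$. They say nothing about a prescribed point $y$. In particular they do not give that the fibre $\pi^{-1}(y)$ meets $\bigcap_{m}T^{-m}U_{s(m)}\cap\bigcap_j T^{-p_j(n)}U_{t(j)}$ for all patterns $(s,t)$ at once. That is a fibrewise statement at a fixed point, which nothing in the paper provides. The $W_l$-conditions in the proof of Lemma \ref{keyopen} are conditions on open sets, so the analogy breaks down. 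Your invariant is also far stronger than what is needed.

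The missing idea is to track an open set rather than a point. Let $W=\bigcap_{i=1}^k\pi(U_i)$; it is open because $\pi$ is open, and it contains $y$. Replace each $U_i$ by $U_i\cap\pi^{-1}(W)$. This is still a neighbourhood of $x_i$, and now $\pi(U_1)=\cdots=\pi(U_k)=W$.

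Next, keep the time $0$ in the independence set. Let $F_{r-1}=\bigcup_{i<r}\{p_1(n_i),\ldots,p_i(n_i)\}$, let $\eta$ be a pattern on $\{0\}\cup F_{r-1}$, and put $U_\eta=\bigcap_{m\in\{0\}\cup F_{r-1}}T^{-m}U_{\eta(m)}$, which is nonempty by the induction hypothesis. Since $U_\eta\subseteq U_{\eta(0)}\subseteq\pi^{-1}(W)$, for every $\xi\in[k]^{[r]}$ you get $\pi(U_\eta)\cap\pi(U_{\xi(1)})\cap\cdots\cap\pi(U_{\xi(r)})=\pi(U_\eta)\cap W=\pi(U_\eta)\neq\emptyset$. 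So the hypothesis holds automatically.

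Lemma \ref{ap-open-d-step} with $C_r=\{p_1,\ldots,p_r\}$, applied simultaneously to all pairs $(\eta,\xi)$, then yields $n_r$ making $\{0\}\cup F_r$ independent. Every finite subset of $\bigcup_r\{p_1(n_r),\ldots,p_r(n_r)\}$ lies in some $\{0\}\cup F_r$, so this union is an independence set, and it belongs to $\mathcal{F}_{\infty\text{-}\A}$. This is the paper's argument.

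In your notation, carry the invariant $\pi^{-1}(W)\cap\bigcap_{m\in F_{D-1}}T^{-m}U_{s(m)}\neq\emptyset$ for every pattern $s$, and use this set as the $0$-th set. No recurrence on $Y$ and no reference to $y$ is needed.
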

		\begin{proof}
			it suffices to show that
			for any $k\geq 2$, any open subsets $U_1,\ldots,U_k$ of $X$ with
			$\bigcap_{i=1}^k\pi(U_i)\neq\emptyset$,
			one has that $\mathrm{Ind}(U_1,\ldots,U_k)\in \mathcal{F}_{\infty\text{-}\A}$.

			Now fix an integer $k\geq 2$
			and let $U_1,\ldots, U_{k}$ be open subsets of $X$ with $W=\bigcap_{i=1}^k\pi(U_i)\neq\emptyset$.
			By replacing each $U_i$ with $U_{i}\cap \pi^{-1}(W)$,
			we may assume that $\pi(U_1)=\cdots=\pi(U_{k})=W$. 
			We will inductively  construct the sequence 
			$\{n_i\}_{i\in\mathbb{N}}\subseteq\mathbb{N}$
			such that
			\[
			\bigcup_{i=1}^{\infty}\{ p_1(n_i), p_2(n_i), \ldots, p_i(n_i)\}\in {\rm Ind}(U_1,\ldots, U_k).
			\]

		\noindent{\bf Step 1}:
		Notice that $R_{\pi}\subseteq \mathbf{RP}_X^{[\infty]}$,
		we can apply Lemma \ref{ap-open-d-step} with $ l= k^2$ and $C_1=\{p_1\}$ with $p_1$ vanishing at 0 to the following open sets:
		\begin{align*}
			& U_1\times \cdots \times U_1\times U_2\times \cdots \times U_2\times \cdots\times  U_k\times \cdots \times U_k,\\
			& U_1\times \cdots \times U_k\times U_1\times \cdots \times U_k\times \cdots\times  U_1\times \cdots \times U_k,
		\end{align*}
		to derive  $n_1\in\mathbb{N}$ such that
		\begin{equation}\label{neq-empty}
			U_{i}\cap T^{-p_1(n_1)}U_{j}\neq\emptyset, \quad ~~\forall ~ i, j\in\{1,\ldots,k\}.
		\end{equation}

		\medskip
		\noindent{\bf Step 2}:
		For $(i,j,s,t)\in\{1,\ldots,k\}^{4}$,
		let
		\[
		V_{0}^{(i,j,s,t)}=U_{i}\cap T^{-p_1(n_1)} U_{j},\quad V_{1}^{(i,j,s,t)}=U_{s},\quad V_{2}^{(i,j,s,t)}=U_{t},
		\] which satisfies
		\[ \pi (V_{0}^{(i,j,s,t)})\cap \pi (V_{1}^{(i,j,s,t)})\cap \pi (V_{2}^{(i,j,s,t)})=\pi(U_{i}\cap T^{-n_1} U_{j})\neq\emptyset \]
		by \eqref{neq-empty}.
		Notice that $R_{\pi}\subseteq \mathbf{RP}_X^{[\infty]}$,
		we can apply Lemma  \ref{ap-open-d-step} with $C_2=\{p_1,p_2\}\subseteq \A, l=k^4$ to the following open sets:
		\[
		\prod_{(i,j,s,t)\in\{1,\ldots,k\}^{4} }V_{0}^{(i,j,s,t)}, 
		\prod_{(i,j,s,t)\in\{1,\ldots,k\}^{4} }V_{1}^{(i,j,s,t)}, 
		\prod_{(i,j,s,t)\in\{1,\ldots,k\}^{4} }V_{2}^{(i,j,s,t)},
		\]
		to get some $n_2\in\mathbb{N}$ such that
		\[ V_{0}^{(i,j,s,t)}\cap T^{-p_1(n_2)} V_{1}^{(i,j,s,t)}\cap T^{-p_2(n_2)}V_{2}^{(i,j,s,t)}\neq\emptyset, \ \ \forall\; (i,j,s,t)\in\{1,\ldots,k\}^{4}.\]
		That is for any $i,j,s,t\in [k]$,
		\[ U_{i}\cap T^{-p_1(n_1)} U_{j}\cap T^{-p_1(n_2)}U_{s}\cap T^{-p_2(n_2)}U_{t}\neq\emptyset.\]

		\medskip
		\noindent{\bf Step $r$}:
		Let $r\geq 2$ be an integer and
		assume that we have already found $n_1,\ldots, n_{r-1}\in\mathbb{N}$ such that
		\[
		\bigcup_{i=1}^{r-1}\{ p_1(n_i), p_2(n_i), \ldots, p_i(n_i)\}\in {\rm Ind}(U_1,\ldots, U_k).
		\]
		That is, for any $\eta\in\{1,\ldots,k\}^{\{0,1,\ldots, \frac{(r-1)r}{2} \}}$,
		\[
		U_{\eta}=
		U_{\eta(0)}\cap\bigcap_{i=1}^{r-1}\big(T^{-p_1(n_i)}U_{\eta(\frac{(i-1)i}{2}+1)}\cap\cdots\cap T^{-p_i(n_i)}U_{\eta(\frac{(i-1)i}{2}+i)} \big)
		\neq\emptyset.
		\]
		\medskip
		Now for $\eta\in\{1,\ldots,k\}^{\{0,1,\ldots, \frac{(r-1)r}{2}\}}$ and $\xi\in\{1,\ldots, k\}^{\{1,\ldots,r\}}$, let
		\[
		V_{0}^{(\eta,\xi)}=U_{\eta},\;
		V_{1}^{(\eta,\xi)}=U_{\xi(1)}, \ldots, V_{r}^{(\eta,\xi)}=U_{\xi(r)}.
		\]
		Then
		\[
		\pi(V_{0}^{(\eta,\xi)})\cap \pi( V_{1}^{(\eta,\xi)})\cap \cdots\cap \pi(V_{r}^{(\eta,\xi)})=
		\pi(V_{0}^{(\eta,\xi)})=\pi(U_{\eta})\neq\emptyset.
		\]
		
		Notice that $R_{\pi}\subseteq \mathbf{RP}_X^{[\infty]}$,
		we can apply Lemma \ref{ap-open-d-step} with $C_r=\{p_1,p_2,\cdots,p_r\}\subseteq\A$ and $l= \frac{r(r+1)}{2}+1$ to the following open sets:
		\[
		V_0^{(\eta,\xi)}, V_{1}^{(\eta,\xi)}  ,\ldots, V_{r}^{(\eta,\xi)},
		\]
		to find some ${n_{r}}\in\mathbb{N}$ such that
		\[ V_{0}^{(\eta,\xi)}\cap T^{-p_1(n_{r})} V_{1}^{(\eta,\xi)}\cap T^{-p_2(n_{r})}V_{2}^{(\eta,\xi)}\cap\cdots\cap
		T^{-p_r(n_r)}V_{r}^{(\eta,\xi)}\neq\emptyset\]
		for any $\eta\in\{1,\ldots,k\}^{\{0,1,\ldots, \frac{(r-1)r}{2} \}}$ and
		$\xi\in\{1,\ldots, k\}^{\{1,\ldots,r\}}$.

		This shows that
		\[ U_{\eta}\cap T^{-p_1(n_{r})} U_{\xi(1)}\cap T^{-p_2(n_{r})} U_{\xi(2)}\cap\cdots\cap T^{-p_r(n_{r})}U_{\xi(r)}\neq\emptyset.\]
		Thus $\bigcup_{i=1}^{r}\{p_1(n_i), p_2(n_i),\ldots,p_i(n_i)\}\in {\rm Ind}(U_1,\ldots, U_{k})$.
		
		We finish the construction by induction and hence end the proof.
		
	\end{proof}
	
	At last, we  show Theorem \ref{App},
	\begin{proof}
		By Theorem \ref{hsy} there exist minimal systems $X^*$ and $X_{\infty}^*$ which are almost one to one
		extensions of $X$ and $X_{\infty}$ respectively, and a commuting diagram below such that $\pi^*$ is open.
		\[
		\xymatrix{
			X \ar[d]_{\pi}  & X^* \ar[d]^{\pi*}  \ar[l]_{\sigma} \\
			X_{\infty}   & X_{\infty}^*      \ar[l]_{\tau}
		}
		\]

		Let $\Omega_{X_\infty}=\{y\in X_\infty: |\tau^{-1}(y)|=1\}$ and $\Omega_{X}=\pi^{-1}(\Omega_{X_\infty})$.
		Then $\Omega_{X_\infty},\Omega_X$ are dense $G_\delta$ subsets of $X_\infty$ and $X$ respectively. We next show that $\Omega_X$ meets our requirement.

		Fix  $k\geq 2$ and let $x\in \Omega_X$. Assume that $x_1,\ldots, x_{k}\in \mathbf{RP}_X^{[\infty]}[x]=R_{\pi}[x]$.
		For $i\in [k]$, choose $x_i^*\in X^*$ such that $\sigma(x_i^*)=x_i$.
		Notice that
		\[
		\tau(\theta(x_i^*))=\pi(\sigma(x_i^*))=\pi(x_i)=\pi(x) \in \Omega_{X_\infty},
		\]
		we have
		$\pi^*(x_1^*),\ldots,\pi^*(x_k^*)\in \tau^{-1}(\pi(x))$
		and thus $\pi^*(x_1^*) = \cdots = \pi^*(x_k^*)$.

		As $\sigma$ is almost one to one, $X_\infty$ is also the maximal $\infty$-step pro-nilfactor of $X^*$.
		It follows from Lemma \ref{open-infty-step} that
		$R_{\pi^*}^{(k)}\subseteq \mathrm{Ind}_{d\text{-} \A}^{(k)}(X^*)$,
		which implies that
		$(x_1^*,\ldots,x_k^*)\in  \mathrm{Ind}_{d\text{-}\A}^{(k)}(X^*)$.
		Therefore, by Proposition \ref{factor-ind} we have $(x_1, \ldots, x_k) \in \mathrm{Ind}_{d\text{-}\A}^{(k)}(X)$. 
		This completes the proof.
	\end{proof}

\end{document}